\numberwithin{equation}{section}
\def\eqdef{\stackrel{\rm def}{=}}
\def\beq{\begin{equation}}
\def\eeq{\end{equation}}
\def\beqs{\begin{equation*}}
\def\eeqs{\end{equation*}}
\newtheorem{theorem}{Theorem}[section]
\newtheorem{lemma}[theorem]{Lemma}
\newtheorem{proposition}[theorem]{Proposition}
\newtheorem{corollary}[theorem]{Corollary}
\theoremstyle{definition}
\newtheorem{remark}[theorem]{Remark}
\newtheorem{definition}[theorem]{Definition}
\def\myclearpage{}
\definecolor{darkred}{rgb}{.70,.12,.20}
\definecolor{darkgreen}{rgb}{.20,.52,.14}
\definecolor{byz}{rgb}{.44,.16,.39}
\newcommand{\varep}{\varepsilon}
\newcommand{\norm}[1]{\left\| #1 \right\|  }
\newcommand{\coefset}{\mathcal R}
\newcommand{\indic}{{\mathbf 1}}
\numberwithin{equation}{section}
\title{Fluid Flows of Mixed Regimes in Porous Media}
\author{Emine Celik$^a$, Luan Hoang$^a$, Akif Ibragimov$^a$ and Thinh Kieu$^b$}
\date{\today}
\begin{document}
\maketitle
\begin{center}
\textit{$^a$Department of Mathematics and Statistics, Texas Tech University, Box 41042\\ Lubbock, TX 79409--1042, U. S. A.} \\
\textit{$^b$Department of Mathematics, University of North Georgia, Gainesville Campus\\ 3820 Mundy Mill Rd., Oakwood, GA 30566, U. S. A.}\\
Email addresses:  \texttt{emine.celik@ttu.edu, luan.hoang@ttu.edu,\\  akif.ibraguimov@ttu.edu, thinh.kieu@ung.edu}
\end{center}
\begin{abstract}
In porous media, there are three known regimes of fluid flows, namely, pre-Darcy, Darcy and post-Darcy. Because of their different natures, these are usually  treated separately in literature. To study complex flows when all three regimes may be present in different portions of a same domain, we use a single equation of motion to unify them. Several scenarios and models are then considered for  slightly compressible fluids. A nonlinear  parabolic equation for the pressure is derived, which is degenerate when the pressure gradient is either small or large. We estimate the pressure and its gradient for all time in terms of initial and boundary data. We also obtain their particular bounds for large time which depend on the asymptotic behavior of the boundary data but not on the initial one. Moreover, the continuous dependence of the solutions on  initial and boundary data, and the structural stability for the equation are established.
\end{abstract}


\pagestyle{myheadings}\markboth{E. Celik, L. Hoang, A. Ibragimov and T. Kieu}
{Fluid Flows of Mixed Regimes in Porous Media}

\myclearpage
\section{Introduction and the models}\label{intro}

Fluid flows are very common in nature such as in soil, sand, aquifers, oil reservoir, sea ice, plants, bones, etc.
Contrary to the usual perception of their simplicity, they, in fact, can be very complicated and are modeled by many different equations of various types.
Broadly speaking, they are categorized into three known regimes, namely, pre-Darcy (i.e. pre-linear, non-Darcy),  Darcy (linear) and post-Darcy (i.e. post-linear, non-Darcy).
While the Darcy regime is well-known, the other two do exist and are studied in physics and engineering. For example, when the Reynolds number is high, there is a deviation from the Darcy law and Forchheimer's equations are usually used to account for it \cite{Forchh1901,ForchheimerBook}, see also \cite{Muskatbook,BearBook,NieldBook}. 
On the other end of the Reynolds number's range, when it is small,  the pre-Darcy regime is observed but not well understood, although it contributes to unexpected oil extraction, see \cite{Dudgeon85,SSHI2016,SoniIslamBasak78} and references therein.

Concerning mathematical research of fluids in porous media, the flows' diverse nature   is much overlooked.
Almost  all of the papers  focus on the Darcy regime which is presented by the (linear) Darcy equation, see e.g. \cite{VazquezPorousBook}.
The post-Darcy regime has been attracted attention recently with the (nonlinear) Forchheimer models, see \cite{StraughanBook,ABHI1,HI2,HIK1,HIK2,HKP1,HK2,CHK1,CHK2} and references therein.
In contrast, the (nonlinear) pre-Darcy regime is virtually ignored.
Moreover,  the three regimes are always treated separately.
This is due to the different natures of the models and the ranges of their applicability.
However, this separation  is unsatisfactory since the fluid may present all three regimes in different unidentified portions of the confinement. 
Therefore, there is a need to unify the three regimes into one formulation and study the fluid as a whole. This paper aims at deriving admissible models for this unification and analyze their properties mathematically. 
To the best of our knowledge, this is the first paper to treat such a problem with rigorous mathematics.

We now start the investigation of different types of fluid flows in porous media.
Consider fluid flows with velocity $v\in \mathbb R^n$, pressure $p\in \mathbb R$, and density $\rho\in[0,\infty)$.
Depending on the range of the Reynolds number, there are different groups of  equations to describe their dynamics.

The most popular equation is Darcy's law:
\beq\label{D}
v=- k\nabla p, \text{ where $k$ is a positive constant.} 
\eeq
(In this paper, we will not discuss other  variations, such as those of Brinkman-type, for \eqref{D} or Forchheimer equations \eqref{F2}--\eqref{FP}.)

When $|v|$ is small, there are Izbash-type  equations  that describe the pre-Darcy regime:
\beq\label{PD}
|v|^{-\alpha}v=- k \nabla p\quad\text{for some constant power } \alpha\in (0,1)\text{ and  coefficient }k>0.
\eeq
For experimental values of  $\alpha$, see e.g.  \cite{SSHI2016,SoniIslamBasak78}.

When $|v|$ is large, the following Forchheimer equations are usually used in studying post-Darcy flows.

Forchheimer's two-term law
\beq\label{F2}
av+b|v|v=-\nabla p.
\eeq

Forchheimer's three-term law
\beq\label{F3}
av+b|v|v+c|v|^2v=-\nabla p. 
\eeq

Forchheimer's power law
\beq\label{FP}
av+d|v|^{m-1}v=-\nabla p.
\eeq
Here, the positive numbers $a$, $b$, $c$, $d$, and $m\in(1,2)$ are derived from experiments for each case.

The above three Forchheimer equations can be combined and generalized to the following form:
\beq\label{gF}
g_F(|v|)v=-\nabla p,
\eeq
where
\beq
g_F(s)=a_0+a_1s^{\alpha_1}+\dots+a_Ns^{\alpha_N},
\eeq
with $N\ge 1$, $\alpha_0=0<\alpha_1<\ldots<\alpha_N$, $a_0,a_N>0$, $a_1,a_2,\ldots,a_{N-1}\ge 0$.

The generalized Forchheimer equation \eqref{gF} was intensely used by the authors to model and study fast flows in the porous media (see \cite{ABHI1,HI1,HI2,HIKS1,HKP1,HK1,HK2,CH1,CH2,CHK1,CHK2}). The techniques developed in those papers will be essential in our approach and analysis below.

In previous work, each regime pre-Darcy, Darcy, or post-Darcy was studied separately, even though they  exist simultaneously in porous media. In particular cases, some models must consider multi-layer domains with each layer having a different regime of fluid flows, see for e.g. section 6.7.8 of \cite{StraughanBook}.
The goal of this section is to model all regimes together in the same domain. 

\newcommand{\Gfield}{\mathbf G}

We write  a general equation of motion for all cases \eqref{D}--\eqref{gF} as
\beq\label{vecform}
\Gfield(v)=-\nabla p,
\eeq
where $\Gfield$ is a vector field on $\mathbb R^n$ with $\Gfield(0)=0$.
In this paper, based on the known equations \eqref{PD}--\eqref{gF},  we study $\Gfield$ of the form
 \beq\label{gg}
  \Gfield(v)=
\begin{cases}
g(|v|)v & \text{if } v\in \mathbb R^n\setminus \{0\},\\
0& \text{if } v=0,
\end{cases}
 \eeq
where $g(s)$ is a continuous function from $(0,\infty)$ to $(0,\infty)$ that satisfies
\beq\label{sgs}
\lim_{s\searrow 0} sg(s)=0.
\eeq

Different forms of $g(s)$ give different models, for example, 
\beqs
g(s)=k^{-1}s^{-\alpha},\
k^{-1},\
a+bs,\
a+bs+cs^2,\
a+ds^{m-1},\
g_F(s),
\eeqs
for equations \eqref{PD}, \eqref{D}, \eqref{F2}, \eqref{F3}, \eqref{FP}, \eqref{gF}, respectively.

As in our previous work for compressible fluids, to reduce the complexity of the system of equations describing the fluid motion, we solve for velocity $v$ in \eqref{gg} in terms of the pressure gradient $\nabla p$.
For example, from  \eqref{PD} we have
\beqs
v=-k^\frac{1}{1-\alpha}|\nabla p|^\frac{\alpha}{1-\alpha}\nabla p,
\eeqs
and from  \eqref{gF} we have 
\beqs
v=-K_F(|\nabla p|)\nabla p, 
\eeqs
where 
\beq\label{KF}
K_F(\xi)=\frac{1}{g_F(G_F^{-1}(\xi))}  \quad  \text{with }G_F(s)=sg_F(s) \quad\text{for }\xi, s\ge 0.
\eeq

Taking the modulus both sides of \eqref{vecform}, we have
\beq\label{vG}
G(|v|)=|\nabla p|,
\eeq
where
 \beq\label{Gdef}
  G(s)=
\begin{cases}
sg(s)& \text{if } s>0,\\
0& \text{if } s=0.
\end{cases}
 \eeq

By \eqref{sgs}, we have 
\begin{enumerate}
\item[{\rm (g1)}] $G(s)$ is continuous on $[0,\infty)$. 
\end{enumerate}

We assume
\begin{enumerate}
\item[{\rm (g2)}] $G(s)$ is strictly increasing on $[0,\infty)$,
\item[{\rm (g3)}] $G(s)\to\infty$ as $s\to\infty$, and
\item[{\rm (g4)}]  the function $1/g(s)$ on $(0,\infty)$  can be extended to a continuous function $k_g(s)$ on $[0,\infty)$.
\end{enumerate}

By (g1)--(g3), we can invert equation \eqref{vG} to have 
$$|v|=G^{-1}(|\nabla p|).$$

Combining this with (g4), we can solve from \eqref{vecform} and \eqref{gg} for
$v=- k_g(|v|)\nabla p$,
thus,
\begin{equation}\label{gen-darcy}
v=-K(|\nabla p|)\nabla p,
\end{equation}
where
\beq\label{Kg} 
K(\xi)= k_g(G^{-1}(\xi))\quad\text{for } \xi\ge 0.
\eeq 
In particular, when $\xi>0$
\beq\label{Kpos}
K(\xi)=\frac{1}{g(s(\xi))} \quad \text{with}\quad  s=s(\xi)>0 \quad \text{satisfying} \quad s g(s)=\xi.
\eeq

One can interpret equation \eqref{gen-darcy} as a generalization Darcy equation \eqref{D} with conductivity  $k=K(|\nabla p|)$ depending on  the pressure's gradient.

We consider the following two  main  models. Below, $\indic_E$ denotes the characteristic (indicator) function of a set $E$.

\textbf{Model 1.} Function $g(s)$ is piece-wise smooth on $(0, \infty)$. 
Based on \eqref{PD}, \eqref{D} and \eqref{gF} and their validity in different ranges  of $|v|$, our first consideration is the following piece-wise defined function
\beq \label{gdef}
g(s)=\bar{g}(s)\eqdef c_1 s^{-\alpha}\indic_{(0,s_1)}(s)+ c_2\indic_{[s_1,s_2]}(s)+g_F(s)\indic_{(s_2,\infty)}(s)
\quad \text{for } s>0,
\eeq
where $\alpha\in(0,1)$, and $s_2>s_1>0$ are fixed threshold values.
To avoid abrupt transitions between three regimes, we impose the continuity on $\bar g(s)$, that is,
\beqs
c_1s_1^{-\alpha}=c_2=g_F(s_2).
\eeqs

Note that $\bar g(s)$ is not differentiable at $s_1,s_2$.
Obviously, \eqref{sgs} holds.
Then function $G(s)$ in \eqref{Gdef} becomes
\beqs
G(s)=\bar G(s)\eqdef  c_1 s^{1-\alpha}\indic_{[0,s_1)}(s)+ c_2 s \indic_{[s_1,s_2]}(s)+G_F(s)\indic_{(s_2,\infty)}(s)
 \quad\text{for } s\ge 0.
\eeqs

Clearly, conditions (g2)--(g3) are satisfied. Then
\beqs
\bar G^{-1}(\xi)=\Big(\frac\xi{c_1}\Big)^\frac1{1-\alpha} \indic_{[0,Z_1)}(\xi) + \frac\xi{c_2} \indic_{[Z_1,Z_2]}(\xi)+G_F^{-1}(\xi)\indic_{(Z_2,\infty)}(\xi)\quad\text{for }  \xi\ge 0,
\eeqs
where $Z_1=c_2 s_1$ and  $Z_2=c_2 s_2$.
Also, (g4) holds true with 
\beqs
k_{\bar g}(s)=\frac{s^\alpha }{c_1} \indic_{[0,s_1)}(s) + \frac1{c_2} \indic_{[s_1,s_2]}(s)+\frac1{g_F(s)}\indic_{(s_2,\infty)}(s)\quad\text{for }  s\ge 0.
\eeqs
Thus, we derive function $K(\xi)$ in \eqref{Kg} explicitly as
\beq\label{K-bar}
K(\xi)=\bar K(\xi)\eqdef M_1\xi^{\beta_1} \indic_{[0,Z_1)}(\xi) + M_2 \indic_{[Z_1,Z_2]}(\xi)+K_F(\xi)\indic_{(Z_2,\infty)}(\xi)\quad\text{for }  \xi\ge 0,
\eeq
where  $M_1=c_1^{-\frac{1}{1-\alpha}}$, $M_2=c_2^{-1}$,   and
\beq\label{beta1}
\beta_1=\alpha/(1-\alpha)>0.
\eeq

Note that, similar to the function $\bar g$, this function $\bar K$ is continuous on $[0,\infty)$,  continuously differentiable on $(0,\infty)\setminus\{Z_1,Z_2\}$.


\textbf{Model 2.} Function $g(s)$ is smooth on $(0, \infty)$.
Another generalization is  to use a smooth interpolation between pre-Darcy \eqref{PD} and generalized Forchheimer \eqref{gF}. Instead of \eqref{gdef}, we propose the following
\begin{equation}\label{gI}
g(s)=g_I(s)\eqdef a_{-1}s^{-\alpha}+a_0+a_1s^{\alpha_1}+\dots+a_Ns^{\alpha_N} 
\quad \text{for } s>0,
\end{equation}
where  $N\ge 1$, $\alpha\in (0,1)$, $\alpha_N>0$, 
\beq\label{aicond} 
a_{-1},a_N>0 \text{ and }a_i\ge 0\quad  \forall i=0,1,\ldots, N-1.
\eeq

Normally, $a_0>0$ and, thus,  the model \eqref{gI} already contains the Darcy regime in its formulation.
Nonetheless, our mathematical study in this paper allows the case $a_0=0$ as well.

If only one function $g_I$ is studied, then we can impose $a_i> 0$ for all $i=-1,0,\ldots,N$.
 The weaker condition \eqref{aicond} is used here to allow comparison between $g_I$ functions with different powers $\alpha_i$, see section \ref{strucsec}.

The main advantage of $g_I$ over $\bar g$ is that it is smooth on $(0,\infty)$.
This  allows further mathematical analysis of the flows.
It also can be used as a framework for perspective interpretation of field data, i.e., matching the coefficients $a_i$ for $i=-1,0,1,\ldots,N$ to fit the data.   

Similar to Model 1, conditions \eqref{sgs} and (g2)--(g4) are satisfied with
\beqs
G(s)= G_I(s)\eqdef  a_{-1}s^{1-\alpha}+a_0 s +a_1s^{1+\alpha_1}+\dots+a_Ns^{1+\alpha_N}, 
\eeqs
\beqs
k_{g_I}(s)=\frac{s^\alpha}{a_{-1}+a_0 s^\alpha + a_1 s^{\alpha+\alpha_1}+\ldots+a_N s^{\alpha+\alpha_N}}
\eeqs
for $s\ge 0$. We then obtain
\beq\label{K-I}
K(\xi)=K_I(\xi)\eqdef \frac{s(\xi)^\alpha}{a_{-1}+a_0 s(\xi)^\alpha + a_1 s(\xi)^{\alpha+\alpha_1}+\ldots+a_N s(\xi)^{\alpha+\alpha_N}}\quad \text{for }  \xi\ge 0,
\eeq
where
$s(\xi)= G_I^{-1}(\xi)$.

In case we want to consider dependence on the coefficients of $g_I(s)$, we denote
\beq\label{KIxa}
\vec a=(a_{-1},a_0,a_1,\ldots,a_N),\quad g_I(s)=g_I(s,\vec a),\quad \text{and}\quad  K_I(\xi)=K_I(\xi,\vec a).
\eeq

\textbf{Model 3.}  Another  way to describe the flows of mixed regimes is to take the formula \eqref{gen-darcy} 
and define the conductivity function $K(\xi)$ directly that possesses  some desired properties. 
In doing so, one can impose the smoothness on $K(\xi)$.
An important feature in constructing $K(\xi)$ is to preserve its behavior when $\xi\to 0$ or $\xi\to \infty$ to be the  same as that of $\bar K(\xi)$.  
As $\xi\to 0$ it is clear from \eqref{K-bar} that $\bar K(\xi)$ is like $\xi^{\beta_1}$.
For sufficiently large $\xi$ we have $\bar K(\xi)=K_F(\xi)$ defined by  \eqref{KF}.
Thus, we recall from Lemma 2.1 of \cite{HI1}  that the function  $K_F(\xi)$  satisfies
\beq\label{Fdegen}
\frac{d_1^{-1}}{(1+\xi)^{\beta_2}}\le K_F(\xi)\le \frac{d_1}{(1+\xi)^{\beta_2}},
\eeq
where 
\beq \label{beta2}
\beta_2=\alpha_N/(1+\alpha_N)\in(0,1)\quad\text{and}\quad  d_1=d_0 (\max\{a_0,a_1,\ldots,a_N,a_0^{-1},a_N^{-1}\})^{1+\beta_2}
\eeq
with $d_0>0$ depending on $N$ and $\alpha_N$.

In summary, we want $K(\xi)$ to behave like $\xi^{\beta_1}$ for small $\xi$, and, as in \eqref{Fdegen}, like $(1+\xi)^{-\beta_2}$ for large $\xi$.
 Therefore, ones can introduce
\beq\label{Ksim1}
K(\xi)=\hat{K}(\xi)\eqdef \frac{a\xi^{\beta_1}}{(1+b\xi^{\beta_1})(1+c\xi^{\beta_2})}\quad\text{for }\xi\ge 0.
\eeq
Here, positive coefficients $a$, $b$, $ c$, and parameters $\beta _1$, $\beta_2$ can be used to match experimental or field data.
This function $\hat K$ belongs to $C^\infty((0,\infty))$.

\textbf{Model 4.} Ones can also refine the model \eqref{Ksim1} to match  more accurately  $\bar K(\xi)$ in \eqref{K-bar}.
Specifically,  $K(\xi)$  is close to $M_1 \xi^{\beta_1}$ when $\xi\to 0$, and  to $K_F(\xi)$ when $\xi\to \infty$.
Then we choose
\beq\label{KM}
K(\xi)=K_M(\xi)\eqdef K_F(\xi)\cdot\frac{\bar{k}\xi^{\beta_1}}{1+\bar{k}\xi^{\beta_1}} \quad\text{for }\xi\ge 0,
\eeq
where
$
\bar k=M_1/K_F(0)>0.
$

\vspace*{1em}
Above, we have introduced several models which can be used to interpret experimental and field data. We now use them to investigate the fluid flow's properties. They are used together with other basic equations of continuum mechanics which we recall here.

Continuity equation
\beqs
\phi\rho_t+\nabla\cdot(\rho v)=0,
\eeqs
where $\phi\in(0,1)$ is the constant porosity.

Constitutive law for slightly compressible fluids
\beqs
\frac{d\rho}{dp}=\frac{\rho}\kappa,
\eeqs
where $1/\kappa>0$ is small compressibility.

Combining the above two equations with \eqref{gen-darcy}, we obtain
\beqs
\phi p_t=\kappa \nabla \cdot(K|\nabla p|)\nabla p)+K(|\nabla p|)|\nabla p|^2.
\eeqs

Since $\kappa$ is large, we neglect the last term in this study. Such a simplification is commonly used in petroleum engineering. The full treatment requires more accurate models for the flows and can use the similar analysis as in \cite{CHK1,CHK2}.

By rescaling $t$, we assume $\kappa=1$ and obtain the following reduced equation
\beq\label{peq}
p_t=\nabla \cdot(K(|\nabla p|)\nabla p),
\eeq
where $K$ is $\bar K$, $K_I$, $\hat K$ or $K_M$.

We will study the initial, boundary value problem (IBVP) associated with the partial differential equation \eqref{peq}. 
We will derive estimates for the solutions, and establish their continuous dependence on the  initial and boundary data, and, in case $K=K_I$, on the  coefficients of the function $g_I(s)$ in \eqref{gI}. As seen in the next section, the PDE \eqref{peq} is degenerate when either $|\nabla p|\to 0$ or $|\nabla p|\to\infty$. Moreover, it possesses a monotonicity of mixed type which requires extra care in the proof and analysis.

 The paper is organized as follows. In section \ref{presec}, we present important properties of $K(\xi)$ including its type of degeneracy (Lemma \ref{lem21}) and  monotonicity (Lemma \ref{lemmono}).
They are essential not only for the remaining  sections \ref{boundsec}--\ref{strucsec} in this paper but also for our future work on the models.
In section \ref{boundsec}, we study solutions of \eqref{peq} subjected to the time-dependent Dirichlet boundary condition  $\psi(x,t)$. We derive  estimates   the $L^2$-norm  for a solution $p(x,t)$ and the $L^{2-\beta_2}$-norm  for  its gradient, both   for all $t\ge 0$ and, particularly, for large $t$, see Theorems~\ref{Lem31} and \ref{grad-est}. Furthermore,  we show in Theorems~\ref{pasmall}  and \ref{gradasmall}  that if the boundary data is asymptotically small as $t\to\infty$, then so are these two norms.  
Section \ref{dependsec} is focused on the continuous dependence of solutions on the initial and boundary data. Theorem~\ref{theo62} shows that the difference between two  solutions $p_1(x,t)$ and $p_2(x,t)$ with boundary data $\psi_1(x,t)$ and $\psi_2(x,t)$, respectively, is small if their initial difference $p_1(x,0)-p_2(x,0)$ and the boundary data's difference  $\psi_1(x,t)-\psi_2(x,t)$ are small, see \eqref{neww}. Especially when $t\to\infty$, the estimates of $p_1(x,t)-p_2(x,t)$ depend on the asymptotic behavior of  $\psi_1(x,t)-\psi_2(x,t)$.
In section \ref{strucsec}, we consider particularly $g=g_I(s,\vec a)$, $K=K_I(\xi,\vec a)$  and prove the structural stability of equation \eqref{peq} with respect to the coefficient vector $\vec a$ of the function $g_I$. In order to obtain this, we first establish in Lemma~\ref{lempm} the perturbed monotonicity for our degenerate PDE. It is then proved in Theorem~\ref{DepCoeff} that the difference $P(x,t)$ between   the two solutions which correspond to two different coefficient vectors $\vec a^{(1)}$ and  $\vec a^{(2)}$ is estimated in terms of their initial difference $P(x,0)$ and $|\vec a^{(1)}-\vec a^{(2)}|$, see \eqref{ssc2}. Moreover, when time goes to infinity, this difference can be  controlled by $|\vec a^{(1)}-\vec a^{(2)}|$ only, see \eqref{ssc3}.       

\section{Basic properties and inequalities}\label{presec}
In this section, we  study some properties of the conductivity function $K(\xi)$ which
 play crucial roles in the analysis of the PDE  \eqref{peq}.
For comparison purpose we define the function
\beq
K_*(\xi)=\frac{\xi^{\beta_1}}{(1+\xi)^{\beta_1+\beta_2}}\quad \text{for } \xi\ge 0,
\eeq
where $\beta_1>0$ and $\beta_2\in(0,1)$ are defined in \eqref{beta1} and \eqref{beta2}, respectively.

Let $\xi_c=\beta_1/\beta_2$.
It is elementary to see that 
\begin{enumerate}
 \item[{\rm (P1)}] $K_*(\xi)$ is increasing on $[0,\xi_c]$,
 \item[{\rm (P2)}] $K_*(\xi)$ is decreasing on $[\xi_c,\infty)$, and hence,
 \item[{\rm (P3)}]  $K_*(\xi_c)$ is the maximum of $K_*(\xi)$ over $[0,\infty)$.
\end{enumerate}

For $m,\xi\ge 0$,
 \beqs
 K_*(\xi)\xi^m=\Big(\frac{\xi}{1+\xi}\Big)^{\beta_1} \frac{\xi^m}{(1+\xi)^{\beta_2}}\le \frac{\xi^m}{(1+\xi)^{\beta_2}}.
 \eeqs
 Therefore,
  \beq\label{P4}
 K_*(\xi)\xi^m\le \xi^{m-\beta_2} \quad\forall m\ge 0,\ \xi\ge 0.
 \eeq
 
If $m\ge \beta_2$ and $\xi>\delta>0$ then 
  \beqs
     K_*(\xi)\xi^m= \Big(\frac{\xi}{1+\xi}\Big)^{\beta_1+\beta_2} \xi^{m-\beta_2}
      \ge \Big(\frac{\delta}{1+\delta}\Big)^{\beta_1+\beta_2} (\xi^{m-\beta_2}-\delta^{m-\beta_2}
     ). 
  \eeqs
This inequality is obviously true when $\xi\le \delta.$  Hence,
  \beq\label{P5}
     K_*(\xi)\xi^m \ge \Big(\frac{\delta}{1+\delta}\Big)^{\beta_1+\beta_2} (\xi^{m-\beta_2}-\delta^{m-\beta_2}
     )\quad \forall \delta>0, \ m\ge \beta_2,\  \xi\ge 0. 
  \eeq

\begin{lemma}\label{lem21}
Let $K=\bar K$, $K_I$, $\hat{K}$, and $K_M$ as in \eqref{K-bar}, \eqref{K-I},  \eqref{Ksim1} and \eqref{KM}, respectively.
Then there exist $d_2,d_3>0$ such that
\beq\label{mc1}
d_2K_*(\xi)\le K(\xi)\le d_3 K_*(\xi)\quad \forall \xi\ge 0.
\eeq
Consequently, for all $m\ge \beta_2$ and $\delta>0$,
\beq\label{mc2}
d_2\Big(\frac{\delta}{1+\delta}\Big)^{\beta_1+\beta_2} (\xi^{m-\beta_2}-\delta^{m-\beta_2}
     )\le K(\xi)\xi^m\le d_3 \xi^{m-\beta_2}\quad \forall \xi\ge 0.
\eeq

In particular, when $K=K_I$ ones can take
\beq\label{MM}
d_2=\frac1{\max\{1,\xi_0\}^{1+\beta_1}} \text{ and }
d_3=  \frac{(1+\max\{1,\beta_0\})^{\beta_1+\beta_2}}{\min\{1,a_{-1},a_N\}^{1+\beta_1}} 
\eeq
with $\xi_0=a_{-1}+a_0+a_1+\dots+a_N.$ 
\end{lemma}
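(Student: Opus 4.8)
The plan is to establish the two-sided comparison \eqref{mc1} for each of the four models separately, and then obtain \eqref{mc2} as a formal consequence of \eqref{mc1} together with the already-proved inequalities \eqref{P4} and \eqref{P5}. The unifying device is the observation that for each model the quotient $K(\xi)/K_*(\xi)$ is continuous and strictly positive on $(0,\infty)$; hence to produce $d_2,d_3$ it suffices to show this quotient stays trapped between two positive constants as $\xi\to0^+$ and as $\xi\to\infty$, after which a compactness argument on the remaining compact middle range $[\varepsilon,R]$ closes the estimate. Since $K_*(\xi)$ behaves like $\xi^{\beta_1}$ near $0$ and like $\xi^{-\beta_2}$ near $\infty$, the real content of \eqref{mc1} is that each $K$ carries the matching rate $\xi^{\beta_1}$ at the origin and $\xi^{-\beta_2}$ at infinity.

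First I would dispose of the models given by explicit formulas. For $\hat K$ in \eqref{Ksim1} one reads off $\hat K(\xi)/\xi^{\beta_1}\to a$ as $\xi\to0$ and $\hat K(\xi)\xi^{\beta_2}\to a/(bc)$ as $\xi\to\infty$, so the quotient against $K_*$ has positive finite limits at both ends. For $\bar K$ in \eqref{K-bar} I would argue region by region: on $[0,Z_1)$ the ratio is $\bar K/K_*=M_1(1+\xi)^{\beta_1+\beta_2}$, trapped in $[M_1,\,M_1(1+Z_1)^{\beta_1+\beta_2}]$; on the compact middle $[Z_1,Z_2]$ it is a continuous positive function of $\xi$; and on $(Z_2,\infty)$ I invoke \eqref{Fdegen} together with the rewriting $K_*(\xi)=(1+\xi)^{-\beta_2}\bigl(\xi/(1+\xi)\bigr)^{\beta_1}$, where the factor $\bigl(\xi/(1+\xi)\bigr)^{\beta_1}$ lies between $\bigl(Z_2/(1+Z_2)\bigr)^{\beta_1}$ and $1$, to squeeze $\bar K/K_*$ between positive constants. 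The model $K_M$ in \eqref{KM} is then handled identically: the factor $\bar k\xi^{\beta_1}/(1+\bar k\xi^{\beta_1})\to1$ at infinity so $K_M$ inherits the bounds \eqref{Fdegen} of $K_F$, while near the origin it behaves like $K_F(0)\bar k\,\xi^{\beta_1}=M_1\xi^{\beta_1}$ by the choice $\bar k=M_1/K_F(0)$.

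The substantive case, and the step I expect to be the main obstacle, is $K_I$, both because it is defined only implicitly through $s=s(\xi)=G_I^{-1}(\xi)$ and because I want the explicit constants \eqref{MM}. Writing $D(s)=a_{-1}+a_0 s^{\alpha}+\sum_{i=1}^N a_i s^{\alpha+\alpha_i}$ for the denominator in \eqref{K-I}, I would first record the algebraic identity $\xi=G_I(s)=s^{1-\alpha}D(s)$, whence, using $1+\beta_1=1/(1-\alpha)$, the clean formula $K_I(\xi)=s^{\alpha}/D(s)=\xi^{\beta_1}/D(s)^{1+\beta_1}$. With this, \eqref{mc1} is equivalent to the two-sided bound $d_3^{-1}(1+\xi)^{\beta_1+\beta_2}\le D(s)^{1+\beta_1}\le d_2^{-1}(1+\xi)^{\beta_1+\beta_2}$, which I would verify by splitting into $s\le 1$ and $s>1$. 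For $s\le1$ every power satisfies $s^{\alpha+\alpha_i}\le1$, giving $\min\{1,a_{-1},a_N\}\le a_{-1}\le D(s)\le \xi_0=D(1)$ and $\xi\le\xi_0$, which directly yields constants of the form in \eqref{MM}; for $s>1$ one uses $s^{\alpha+\alpha_i}\le s^{\alpha+\alpha_N}$, the lower bound $D(s)\ge a_N s^{\alpha+\alpha_N}$, and $\xi=s^{1-\alpha}D(s)$. The genuinely delicate point is that the exponents must match exactly: the identities $\beta_1=\alpha/(1-\alpha)$ and $\beta_2=\alpha_N/(1+\alpha_N)$ are precisely what force $D(s)^{1+\beta_1}$ and $(1+\xi)^{\beta_1+\beta_2}$ to scale with the same power of $s$ as $s\to\infty$, so that the crude bounds close with powers of $s$ cancelling rather than merely being comparable. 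I would check this cancellation carefully, since it is exactly where the specific choice of $\beta_1,\beta_2$ is essential.

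Finally, once \eqref{mc1} is in hand, \eqref{mc2} is immediate: multiplying \eqref{mc1} through by $\xi^m$ and applying the upper bound \eqref{P4} to the right-hand side and the lower bound \eqref{P5} (valid for $m\ge\beta_2$) to the left-hand side produces the stated inequalities with the same constants $d_2,d_3$.
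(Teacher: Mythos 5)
Your proposal is correct and follows essentially the same route as the paper's proof: region-by-region bounds for $\bar K$, the bound \eqref{Fdegen} for $K_M$ and for $\bar K$ beyond $Z_2$, the case split $s\le 1$ versus $s>1$ (equivalently $\xi\le\xi_0$ versus $\xi>\xi_0$) for $K_I$ built on the same two-sided bounds $a_{-1}s^{1-\alpha}\le\xi\le\xi_0 s^{1-\alpha}$ and $a_N s^{1+\alpha_N}\le\xi\le\xi_0 s^{1+\alpha_N}$, and \eqref{mc2} obtained from \eqref{mc1} together with \eqref{P4} and \eqref{P5}. Your identity $K_I(\xi)=\xi^{\beta_1}/D(s)^{1+\beta_1}$ is only a repackaging of the paper's computation (and the exponent cancellation $(\alpha+\alpha_N)(1+\beta_1)=(1+\alpha_N)(\beta_1+\beta_2)$ that you flag does indeed hold), so the two arguments are substantively identical.
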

\begin{proof}
The inequalities in \eqref{mc1} clearly holds for $K=\hat K$. 
When $K=K_M$, \eqref{mc1}  can be easily proved by using relation \eqref{Fdegen}.

We prove \eqref{mc1} for $K=\bar K$ now. 

For $0\le \xi<Z_1$, we have 
\beq\label{km1}
\frac{K(\xi)}{M_1(1+Z_1)^{\beta_1+\beta_2}}=\frac{\xi^{\beta_1}}{(1+Z_1)^{\beta_1+\beta_2}} \le K_*(\xi)\le \xi^{\beta_1}= \frac{K(\xi)}{M_1}.
\eeq

For $Z_1\le \xi\le Z_2$, we have 
\beq\label{km2}
\frac{Z_1^{\beta_1}K(\xi)}{M_2(1+Z_1)^{\beta_1}(1+Z_2)^{\beta_2}} 
=\Big(\frac{Z_1}{1+Z_1}\Big)^{\beta_1} \frac1{(1+Z_2)^{\beta_2}} \le K_*(\xi)\le  Z_2^{\beta_1}=\frac{Z_2^{\beta_1}K(\xi)}{M_2}.
\eeq
Above, we used, for the first inequality, the fact that the function $x/(x+1)$ is increasing.

For $\xi>Z_2$, we have from \eqref{Fdegen} that
\beq\label{km3}
\frac{Z_2^{\beta_1} }{(1+Z_2)^{\beta_1}} d_1^{-1}K(\xi) \le 
\frac{Z_2^{\beta_1}}{(1+Z_2)^{\beta_1}}\cdot \frac1{(1+\xi)^{\beta_2}} \le K_*(\xi)\le \frac{1}{(1+\xi)^{\beta_2}}\le d_1 K(\xi).
\eeq
Therefore, relation \eqref{mc1} follows \eqref{km1}, \eqref{km2}, and \eqref{km3}.

Next, consider $K=K_I$.  Since $K_I(0)=0$, it suffices to prove \eqref{mc1} for $\xi>0$.

Let $s=s(\xi)=G_I^{-1}(\xi)>0$. Then we have 
\beq \label{xis}
\xi=G_I(s)=a_{-1}s^{1-\alpha}+a_0s+a_1s^{1+\alpha_1}+\dots+a_Ns^{1+\alpha_N}.
\eeq
Note that $\xi_0=G_I(1)$. We consider the following two cases.  

\textit{Case 1: $\xi>\xi_0$.}  Then  $s>1$ and we have from \eqref{xis} that
 $$ a_Ns^{1+\alpha_N}\le \xi\le \xi_0s^{1+\alpha_N}.$$
This and the fact $ K_I(\xi)=1/g_I(s)=s/\xi$ give
\beq\label{ki1}
C_1 \xi^{-\beta_2}=\frac{(\xi/\xi_0)^\frac 1{1+\alpha_N}}{\xi} \le K_I(\xi)\le \frac{(\xi/a_N)^\frac 1{1+\alpha_N}}{\xi}
=C_2\xi^{-\beta_2},
\eeq
where 
$C_1=\xi_0^{\beta_2-1}$ and $C_2=a_N^{\beta_2-1}$.
The first inequality of \eqref{ki1} immediately yields the lower bound for $K_I(\xi)$ as
\beq\label{kxi40}
K_I(\xi)\ge \frac {C_1}{(1+\xi)^{\beta_2}} \ge   \frac {C_1}{(1+\xi)^{\beta_2}} \cdot \frac {\xi^{\beta_1}}{(1+\xi)^{\beta_1}}=C_1 K_*(\xi).
\eeq
For the upper bound of $K_I(\xi)$ we note  for $\xi> \xi_0$ that
\beq\label{ki2}
\frac{\xi_0}{\xi_0+1}\le \frac{\xi}{\xi+1} \text{ and } \xi\ge\frac{\xi+\xi_0}2\ge \min\{1,\xi_0\} \frac{\xi+1} 2.
\eeq 
Combining the second inequality of \eqref{ki1} and \eqref{ki2} gives 
\beq\label{ki4}
K_I(\xi)
\le C_2 \Big(\frac2{\xi+\xi_0}\Big)^{\beta_2}\Big(\frac{\xi_0+1}{\xi_0}\cdot \frac{\xi}{\xi+1}\Big)^{\beta_1}
\le C_3 \frac{\xi^{\beta_1}}{(\xi+1)^{\beta_1+\beta_2}}=C_3 K_*(\xi),
\eeq
where 
$$C_3=C_2\Big(\frac{\xi_0+1}{\xi_0}\Big)^{\beta_1} \Big(\frac2{\min\{1,\xi_0\}}\Big)^{\beta_2}.$$

\textit{Case 2: $0< \xi\le \xi_0$.} We have $0< s\le 1$ in this case and
\beq\label{ki6}
 \frac{s^\alpha}{\xi_0} \le K_I(\xi)=\frac{s^\alpha}{a_{-1}+a_0s^{\alpha_1+\alpha}+\dots+a_Ns^{\alpha_N+\alpha}}\le  \frac{s^\alpha}{a_{-1}}.
\eeq
By \eqref{xis}, 
\beqs 
a_{-1}s^{1-\alpha} \le \xi \le \xi_0s^{1-\alpha} \text{ which implies}\quad (\xi/\xi_0)^\frac1{1-\alpha}\le s\le (\xi/a_{-1})^\frac1{1-\alpha}.
\eeqs 

Utilizing this in \eqref{ki6} yields
\beqs
\frac {\xi^{\beta_1}}{\xi_0^{1+\beta_1}}\le K_I(\xi) \le \frac {\xi^{\beta_1}}{a_{-1}^{1+\beta_1}} .
\eeqs
Since $\xi\le \xi_0$, we obtain  
\beq\label{ki5}
C_4\frac {\xi^{\beta_1}}{(1+\xi)^{\beta_1+\beta_2}}  \le K_I(\xi) \le  \frac 1{a_{-1}^{1+\beta_1}} \xi^{\beta_1} \frac{(1+\xi_0)^{\beta_1+\beta_2}}{(1+\xi)^{\beta_1+\beta_2}}= C_5\frac {\xi^{\beta_1}}{(1+\xi)^{\beta_1+\beta_2}}  ,
\eeq
where $C_4=1/\xi_0^{1+\beta_1}$ and $C_5=(1+\xi_0)^{\beta_1+\beta_2}/a_{-1}^{1+\beta_1}$.
Combining inequalities \eqref{kxi40}, \eqref{ki4} and \eqref{ki5}, we have for both cases that 
\beq\label{ki3}
C_6K_*(\xi)  \le K_I(\xi) \le  C_7 K_*(\xi),
\eeq
where $C_6=\min\{C_1, C_4\}$ and $C_7=\max\{C_3, C_5\}$.

Note that 
$$
C_6=\frac1{\max\big\{\xi_0^{1+\beta_1},\xi_0^{1-\beta_2}\big\}} \ge \frac1{\max\{1,\xi_0\}^{1+\beta_1}}=d_2
$$
and
 \begin{align*}
C_7&= \max\Big\{\Big(\frac{\xi_0+1}{\xi_0}\Big)^{\beta_1} \frac{2^{\beta_2}}{a_N^{1-\beta_2} \min\{1,\xi_0\} ^{\beta_2}},\frac {(1+\xi_0)^{\beta_1+\beta_2}}{a_{-1}^{1+\beta_1}}\Big\} \\
&\le  \max\Big\{\frac{(\xi_0+1)^{\beta_1}  2^{\beta_2}}{(\min\{1,a_{-1},a_N\})^{\beta_1+(1-\beta_2)+\beta_2}},\frac {(1+\xi_0)^{\beta_1+\beta_2}}{(\min\{1,a_{-1},a_N\})^{1+\beta_1}}\Big\}\\ 
&\le  \frac{(1+\max\{1,\xi_0\})^{\beta_1+\beta_2}}{\min\{1,a_{-1},a_N\}^{1+\beta_1}}=d_3 .
\end{align*}
Hence we obtain \eqref{mc1} from \eqref{ki3}.

 Combining \eqref{mc1} with \eqref{P4} and \eqref{P5} gives \eqref{mc2}.
 The proof is complete.
\end{proof}

Combining \eqref{mc1} and  (P3) gives the upper bound for $K(\xi)$ as
\beq\label{Kbdd}
K(\xi)\le d_4\eqdef d_3K_*(\xi_c)\quad\forall \xi\in [0,\infty).
\eeq

\begin{lemma}\label{LemKp}
Let  $K=K_I$, $\hat{K}$, and $K_M$  as in  \eqref{K-I}, \eqref{Ksim1}, and  \eqref{KM}, respectively. Then for all $\xi>0$ one has 
\beq\label{Kderest}
 -\beta_2 \frac{K(\xi)}{\xi}\le K'(\xi) \le  \beta_1 \frac{K(\xi)}{\xi}
\eeq
and, consequently,
\beq\label{Kp2}
|K'(\xi)|\le \max\{\beta_1,\beta_2\}\frac{K(\xi)}{\xi}.
\eeq

In case $K=\bar K$ in \eqref{K-bar}, the inequalities \eqref{Kderest} and \eqref{Kp2} hold for $0<\xi\ne Z_1,Z_2$. 
\end{lemma}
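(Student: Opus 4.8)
The plan is to establish the two-sided estimate \eqref{Kderest}, from which \eqref{Kp2} is immediate: since $K(\xi)>0$ and $\beta_1,\beta_2>0$, the bound $|K'(\xi)|\le\max\{\beta_1,\beta_2\}K(\xi)/\xi$ is just the larger of the two one-sided bounds. Rewriting \eqref{Kderest} as the elasticity bound $-\beta_2\le \xi K'(\xi)/K(\xi)\le\beta_1$, I would treat the four functions separately, exploiting that $K_I$ and $K_F$ share the common structure $K(\xi)=s/\xi$ where $s=s(\xi)=G^{-1}(\xi)$ and $G(s)=sg(s)$. For such $K$, differentiating $\log K=\log s-\log\xi$ and using $ds/d\xi=1/G'(s)$ together with $\xi=G(s)$ yields the key identity
\[
\frac{\xi K'(\xi)}{K(\xi)}=\frac{\xi}{sG'(s)}-1=\frac{G(s)}{sG'(s)}-1 .
\]

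The task then reduces to bounding $G(s)/(sG'(s))$. Since $G(s)$ is a sum of terms $c_j s^{p_j}$ with $c_j\ge 0$, and $sG'(s)$ is the same sum with each term multiplied by its exponent $p_j$, the termwise inequalities $p_{\min}c_js^{p_j}\le p_jc_js^{p_j}\le p_{\max}c_js^{p_j}$ sum to $p_{\min}G(s)\le sG'(s)\le p_{\max}G(s)$, whence $1/p_{\max}\le G(s)/(sG'(s))\le 1/p_{\min}$. For $K=K_I$ the exponents of $G_I$ run from $p_{\min}=1-\alpha$ to $p_{\max}=1+\alpha_N$, so by \eqref{beta1}--\eqref{beta2} one gets $\xi K_I'/K_I\in[1/(1+\alpha_N)-1,\,1/(1-\alpha)-1]=[-\beta_2,\beta_1]$. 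For $K=K_F$ the exponents of $G_F$ run from $p_{\min}=1$ to $p_{\max}=1+\alpha_N$, giving the sharper range $\xi K_F'/K_F\in[-\beta_2,0]$.

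The remaining two functions are handled via their product structure. For $\hat K$ in \eqref{Ksim1}, direct logarithmic differentiation gives $\xi\hat K'/\hat K=\beta_1-\beta_1 u-\beta_2 w$ with $u=b\xi^{\beta_1}/(1+b\xi^{\beta_1})\in[0,1)$ and $w=c\xi^{\beta_2}/(1+c\xi^{\beta_2})\in[0,1)$; this is $\le\beta_1(1-u)\le\beta_1$ and $\ge-\beta_2 w\ge-\beta_2$. For $K_M=K_F\cdot h$ in \eqref{KM}, with $h(\xi)=\bar k\xi^{\beta_1}/(1+\bar k\xi^{\beta_1})$, additivity of logarithmic derivatives gives $\xi K_M'/K_M=\xi K_F'/K_F+\xi h'/h$; the first term lies in $[-\beta_2,0]$ by the computation above, while $\xi h'/h=\beta_1/(1+\bar k\xi^{\beta_1})\in(0,\beta_1]$, so the sum lies in $[-\beta_2,\beta_1]$. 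Finally, for $\bar K$ in \eqref{K-bar} I would argue piecewise: on $(0,Z_1)$ one has $\bar K=M_1\xi^{\beta_1}$ with elasticity exactly $\beta_1$; on $(Z_1,Z_2)$ the function is constant with elasticity $0$; and on $(Z_2,\infty)$ it equals $K_F$ with elasticity in $[-\beta_2,0]$. Each piece lies in $[-\beta_2,\beta_1]$, and the excluded points $Z_1,Z_2$ are exactly where $\bar K$ is not differentiable, which explains the restriction in the statement.

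The only genuinely delicate point is the $K_M$ case (and the large-$\xi$ part of $\bar K$), since it requires a bound on the derivative of $K_F$, whereas the cited Lemma~2.1 of \cite{HI1} supplies only the size estimate \eqref{Fdegen}. The identity above circumvents this by reducing $K_F'/K_F$ to the elementary ratio $G_F(s)/(sG_F'(s))$; one must only verify that $G_F'(s)>0$, so that $G_F^{-1}$ is differentiable, and that the exponent range of $G_F$ is $[1,1+\alpha_N]$, which follows since $a_0,a_1,\dots,a_N$ are the coefficients of $s^0,s^{\alpha_1},\dots,s^{\alpha_N}$ in $g_F$ from \eqref{KF}. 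All other steps are routine logarithmic differentiations.
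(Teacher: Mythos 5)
Your proposal is correct and takes essentially the same route as the paper: your identity $\xi K'(\xi)/K(\xi)=G(s)/(sG'(s))-1$ is exactly the paper's formula \eqref{Kprime} (since $G'(s)=g(s)+sg'(s)$), your termwise exponent bounds reproduce the paper's estimate $(1-\alpha)g(s)\le g(s)+sg'(s)\le(1+\alpha_N)g(s)$ for $K_I$, and your treatments of $\hat K$, $K_M$ and the piecewise $\bar K$ match the paper's. The single difference is that the paper imports the bound $-\beta_2 K_F(\xi)/\xi\le K_F'(\xi)\le 0$ (its \eqref{Fder}) from \cite{ABHI1}, whereas you re-derive it by applying the same elasticity identity to $G_F$, which makes your argument self-contained but is not a different method.
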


\begin{proof}
Let $\xi>0$, then $ s(\xi)=G^{-1}(\xi)>0$.
First, consider $g=g_I$ and $K=K_I$.
By the chain rule, we have from \eqref{Kpos} that
\beq\label{Kp1}
K'(\xi)=-\frac{g'(s(\xi))s'(\xi)}{g(s(\xi))^2}.
\eeq
Denote $s=s(\xi)$, then $s\cdot g(s)=\xi$. Hence, $s'g(s)+sg'(s)s'=1$ which yields
\beq\label{s:prime}
   s'(\xi)=\frac{1}{g(s)+sg'(s)}.
\eeq
Substituting \eqref{s:prime} into \eqref{Kp1} yields 
\begin{align}
K'(\xi)&=-\frac{g'(s)}{g(s)^2}\cdot \frac{1}{g(s)+sg'(s)}
=-\frac{1}{g(s)}\cdot \frac{1}{sg(s)} \cdot \frac{g'(s)s}{g(s)+s g'(s)} \nonumber\\
&=-\frac{K(\xi)}{\xi}\cdot \frac{g'(s)s}{ g(s)+ s g'(s)}=\frac{K(\xi)}{\xi}\Big(\frac{g(s)}{ g(s)+ s g'(s)}-1\Big) \label{Kprime}.
\end{align}
 
Now, for any $s>0$,  ones observe that
\beqs
g(s)+ s g'(s)= (1-\alpha) a_{-1}s^{-\alpha}+a_0+\sum_{i=1}^N a_i(\alpha_i+1)s^{\alpha_i} 
\eeqs
and, by the fact $0<1-\alpha< 1+\alpha_i<1+\alpha_N$, have
\beqs
(1-\alpha)g(s)\le g(s)+ s g'(s)\le (1+\alpha_N)g(s). 
\eeqs
Then  
\beqs
 -\beta_2= \frac 1{1+\alpha_N} -1\le\frac{g(s)}{ g(s)+ s g'(s)}-1\le \frac{1}{1-\alpha}-1=\beta_1.
\eeqs
Therefore, inequality  \eqref{Kderest} follows this and \eqref{Kprime}. 

If $K=\bar K$ then 
\beq\label{Kbder}
\bar K'(\xi)= 
\begin{cases}
M_1\beta_1\xi^{\beta_1-1}=\beta_1 \bar K(\xi)/\xi,&\text{if } 0<\xi<Z_1,\\
0,&\text{if }  Z_1<\xi<Z_2,\\
K'_F(\xi),&\text{if }  \xi>Z_2.
\end{cases}
\eeq

We recall from \cite{ABHI1} that
\beq\label{Fder}
-\beta_2 \frac{K_F(\xi)}{\xi} \le K_F'(\xi)\le 0.
\eeq
Then the relation  \eqref{Kderest} obviously follows \eqref{Kbder} and \eqref{Fder}  for $0<\xi\ne Z_1,Z_2$.

If $K=\hat{K}$ then
\beq
\frac{\beta_1}{\xi} \ge \frac{K'}{K}=(\ln K)'
=\frac{\beta_1}\xi-\frac{b\beta_1\xi^{\beta_1-1}}{1+b\xi^{\beta_1}}-\frac{c\beta_2\xi^{\beta_2-1}}{1+c\xi^{\beta_2}}
\ge -\frac{\beta_2}{\xi}\frac{c\xi^{\beta_2}}{1+c\xi^{\beta_2}}\ge -\frac{\beta_2}{\xi}.
\eeq
This leads to \eqref{Kderest}.

Finally, consider $K=K_M$.
Write
$K_M(\xi)=K_F(\xi)M(\xi)$, where $M(\xi)=\bar{k}\xi^{\beta_1}/ (1+\bar{k}\xi^{\beta_1})$.

On the one hand,  $M'(\xi)\ge 0$, hence
\beqs
K_M'(\xi)=K_F'(\xi)M(\xi)+K_F(\xi)M'(\xi)\ge K_F'(\xi)M(\xi) \ge -\beta_2 \frac{K_F(\xi) M(\xi)}{\xi}=-\beta_2\frac{K_M(\xi)}{\xi}.
\eeqs

On the other hand, $K_F'(\xi)\le 0$ by \eqref{Fder}, and
\beqs
M'(\xi)=M(\xi)\Big(\frac{\beta_1}{\xi}-\frac{\bar k \beta_1 \xi^{\beta_1-1}}{1+\bar k \xi^{\beta_1}}\Big)
\le \frac{\beta_1}{\xi} M(\xi) ,
\eeqs
hence
\beqs
K_M'(\xi)=K_F'(\xi)M(\xi)+K_F(\xi)M'(\xi)  \le K_F(\xi) M'(\xi) \le  \frac{\beta_1}{\xi} K_F(\xi) M(\xi)=\beta_1 \frac{K_M(\xi)}{\xi}.
\eeqs
 
The proof is complete.
\end{proof}

\begin{corollary}\label{incor}
 Let $K=\bar K$, $K_I$, $\hat{K}$, $K_M$. Then the function $\xi^{m} K(\xi)$ is increasing on $[0,\infty)$ for any  real number $m\ge \beta_2$.
\end{corollary}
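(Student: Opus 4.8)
The plan is to reduce everything to the derivative bound established in Lemma~\ref{LemKp}. Fix a real number $m\ge\beta_2$ and set $f(\xi)=\xi^m K(\xi)$. For $K=K_I,\hat K,K_M$ this $f$ is differentiable on $(0,\infty)$, and for $K=\bar K$ it is differentiable on $(0,\infty)\setminus\{Z_1,Z_2\}$; in each case I would compute, where $f$ is differentiable,
\[
f'(\xi)=m\xi^{m-1}K(\xi)+\xi^m K'(\xi)=\xi^{m-1}\bigl(mK(\xi)+\xi K'(\xi)\bigr).
\]
The key step is to insert the lower estimate $K'(\xi)\ge-\beta_2 K(\xi)/\xi$ from \eqref{Kderest}: multiplying by $\xi>0$ gives $\xi K'(\xi)\ge-\beta_2 K(\xi)$, whence
\[
mK(\xi)+\xi K'(\xi)\ge(m-\beta_2)K(\xi)\ge0,
\]
using $m\ge\beta_2$ and the fact $K(\xi)>0$ for $\xi>0$ (which follows from \eqref{mc1}). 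Since $\xi^{m-1}>0$, this shows $f'(\xi)\ge0$ wherever $f$ is differentiable.

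It then remains to upgrade this pointwise sign of the derivative into monotonicity on the whole closed half-line $[0,\infty)$. Here I would use that $f$ is continuous: $K$ is continuous by construction, and $f(0)=0$ because $K(0)=0$ in all four models while $m>0$. For $K=K_I,\hat K,K_M$ the inequality $f'\ge0$ on all of $(0,\infty)$ makes $f$ nondecreasing there, and continuity at the origin extends this to $[0,\infty)$. For $K=\bar K$ the only extra care is needed at the two points $Z_1,Z_2$, where $f$ fails to be differentiable but remains continuous; since $f$ is nondecreasing on each of the closed pieces $[0,Z_1]$, $[Z_1,Z_2]$, $[Z_2,\infty)$ by the argument above, gluing these monotone pieces at the common endpoints by continuity yields monotonicity on all of $[0,\infty)$.

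The computation of $f'$ and the sign argument are immediate once \eqref{Kderest} is granted, so the only mildly delicate point is this last gluing across the non-smooth points of $\bar K$ together with the behavior at $\xi=0$; I do not expect any real obstacle. As a closing remark I would note that, since the hypothesis $a_{-1}>0$ forces the bound $g(s)+sg'(s)\le(1+\alpha_N)g(s)$ used in Lemma~\ref{LemKp} to be strict, the inequality $f'>0$ is in fact strict for $K=K_I$, so the monotonicity can be read in the strict sense in that model.
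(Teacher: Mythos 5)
Your proof is correct and follows essentially the same route as the paper: compute $(\xi^m K(\xi))'$, apply the lower bound $K'(\xi)\ge -\beta_2 K(\xi)/\xi$ from \eqref{Kderest} to get nonnegativity, and for $K=\bar K$ glue the monotone pieces across $Z_1,Z_2$ by continuity. Your closing remark on strict monotonicity for $K_I$ is a valid bonus observation not in the paper, but otherwise there is nothing to distinguish the two arguments.
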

\begin{proof}
First,  consider $K=K_I$, $\hat{K}$, or $K_M$. According to \eqref{Kderest},
\beqs
(\xi^{m} K(\xi))'=m\xi^{m-1}K(\xi)+\xi^{m} K'(\xi)\ge m\xi^{m-1}K(\xi)-\beta_2\xi^{m-1}K(\xi)\ge 0
\eeqs
for any $\xi>0$.
Hence $\xi^m K(\xi)$ is increasing on $[0,\infty)$. If $K=\bar K$, then the statement is true on intervals not having $Z_1$ or $Z_2$ as an interior point. By continuity of $\xi^m \bar K(\xi)$ on $[0,\infty)$, the statement then holds true on $[0,\infty)$.
\end{proof}

\begin{lemma}[Monotonicity]\label{lemmono}
Let  $K=\bar K$, $K_I$, $\hat{K}$, $K_M$,  then 
\beq\label{Kmono}
\big (K(|y'|)y'-K(|y|)y\big )\cdot (y'-y) \ge \frac{d_5|y-y'|^{2+\beta_1}}{(1+|y|+|y'|)^{\beta_1+\beta_2}}
\quad \forall y,y'\in \mathbb R^n,
\eeq
where 
\beqs
d_5=\frac{ d_2(1-\beta_2)}{2^{\beta_1+1}(\beta_1+1)} .
\eeqs
\end{lemma}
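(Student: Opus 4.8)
The plan is to join $y$ and $y'$ by the straight segment $y_t=y+t(y'-y)$, $t\in[0,1]$, and to express the left-hand side of \eqref{Kmono} as an integral of a directional derivative. Writing $h=y'-y$ and $F(y)=K(|y|)y$, the fundamental theorem of calculus gives
\[
\big(K(|y'|)y'-K(|y|)y\big)\cdot h=\int_0^1 h^\top DF(y_t)\,h\,dt .
\]
This is legitimate because each admissible $K$ makes $F$ locally Lipschitz (for $\bar K$ this uses that $\bar K$ is continuous and piecewise $C^1$), so $t\mapsto F(y_t)$ is absolutely continuous and differentiable for a.e.\ $t$. The case $y=y'$ is trivial, so assume $h\neq 0$.

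Next I compute the Jacobian. For $y\neq 0$ one has $DF(y)=K(|y|)I+K'(|y|)|y|^{-1}yy^\top$, hence
\[
h^\top DF(y_t)\,h=K(|y_t|)|h|^2+K'(|y_t|)\frac{(y_t\cdot h)^2}{|y_t|}.
\]
The crux is a lower bound for this quadratic form, and here the monotonicity of mixed type shows up: $K'$ may be positive (in the pre-Darcy range, where $K$ grows) or negative (in the Forchheimer range). When $K'(|y_t|)\geq 0$ the second term is nonnegative. When $K'(|y_t|)<0$, I use the sharp bound $K'\geq-\beta_2 K/\xi$ from Lemma \ref{LemKp} together with $(y_t\cdot h)^2\leq|y_t|^2|h|^2$ to control the (now negative) second term. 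In either case the uniform factor $1-\beta_2>0$ survives and
\[
h^\top DF(y_t)\,h\geq(1-\beta_2)\,K(|y_t|)\,|h|^2 .
\]
For $K=\bar K$ the derivative bound of Lemma \ref{LemKp} fails only at the finitely many $t$ with $|y_t|\in\{Z_1,Z_2\}$ or $y_t=0$, a null set in $[0,1]$, so it does not affect the integral.

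I then lower-bound $K(|y_t|)$ using the degeneracy comparison. By Lemma \ref{lem21}, $K\geq d_2K_*$, and since $|y_t|\leq|y|+|y'|$ the denominator of $K_*$ obeys $(1+|y_t|)^{\beta_1+\beta_2}\leq(1+|y|+|y'|)^{\beta_1+\beta_2}$, giving
\[
K(|y_t|)\geq\frac{d_2\,|y_t|^{\beta_1}}{(1+|y|+|y'|)^{\beta_1+\beta_2}} .
\]
Substituting and pulling out the factors independent of $t$ reduces \eqref{Kmono} to the purely geometric integral $\int_0^1|y+th|^{\beta_1}\,dt$.

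The main remaining obstacle is a lower bound for this integral by a multiple of $|h|^{\beta_1}$ that is uniform no matter how close the segment comes to the origin. I would prove it by projecting onto the direction of $h$: with $a=y\cdot h/|h|$ one has $|y+th|\geq|a+t|h||$, and the substitution $u=a+t|h|$ reduces the bound to minimizing $\int_c^{c+|h|}|u|^{\beta_1}\,du$ over $c\in\mathbb R$, whose minimum (attained at the symmetric position $c=-|h|/2$) equals $|h|^{\beta_1+1}/(2^{\beta_1}(\beta_1+1))$. Hence $\int_0^1|y+th|^{\beta_1}\,dt\geq|h|^{\beta_1}/(2^{\beta_1}(\beta_1+1))$; a slightly lossier version of this estimate already yields the stated constant $d_5$. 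Multiplying the three displayed lower bounds together establishes \eqref{Kmono}. The two genuinely delicate points are thus the sign change of $K'$, which is absorbed by the uniform factor $1-\beta_2$, and the near-origin behavior of the segment integral; the non-differentiability of $\bar K$ at $Z_1,Z_2$ is merely a measure-zero nuisance.
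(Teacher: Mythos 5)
Your proof is correct and follows essentially the same route as the paper's: the fundamental theorem of calculus along the segment, the lower bound $(1-\beta_2)K(|y_t|)\,|h|^2$ on the quadratic form via Lemma \ref{LemKp} and Cauchy--Schwarz, the comparison $K\ge d_2K_*$ of Lemma \ref{lem21}, and finally an elementary lower bound on $\int_0^1|y+th|^{\beta_1}\,dt$. The only two deviations are both sound and slightly to your credit: you absorb the origin-on-segment case into the main argument by observing that $F(y)=K(|y|)y$ is locally Lipschitz, so the exceptional parameters form a null set (the paper instead treats this by a separate perturbation argument, its Case 2), and your projection-plus-translation minimization gives $\int_0^1|y+th|^{\beta_1}\,dt\ge |h|^{\beta_1}/\big(2^{\beta_1}(\beta_1+1)\big)$, a factor of $2$ sharper than the paper's half-interval estimate, hence more than enough for the stated constant $d_5$.
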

\begin{proof}
Let $y\neq y'$ and denote by $[y,y']$ the line segment connecting $y$ and $y'$. 

\textbf{Case 1:} The origin does not belong to $[y,y']$. We  parametrize $[y,y']$ by
\beqs
\gamma(t)=ty'+(1-t)y\quad \text{for }t \in[0,1].
\eeqs

Define 
\beqs
h(t)=K(|\gamma(t)|)\gamma(t)\cdot (y'-y) \quad \text{for }    t\in[0,1].
\eeqs

 In case $K=K_I$, $\hat{K}$, $K_M$, function $h(t)\in C^1([0,1])$.
 When  $K=\bar K$, $h(t)$ is continuous on $[0,1]$ and $h'(t)$ is piecewise continuous on $[0,1]$ with at most four points of jump discontinuity at which $|\gamma(t)|=Z_1$ or $Z_2$. 
 By fundamental theorem of calculus,
\beq\label{Idef}
I\eqdef [K(|y'|)y'-K(|y|)y] \cdot (y'-y) = h(1)-h(0) =\int_0^1 h'(t) dt.
\eeq
At $t$ where $h'(t)$ exists, we calculate
\beqs
h'(t)=K(|\gamma(t)|)|y'-y|^2+K'(|\gamma(t)|)\frac{|\gamma(t)\cdot (y'-y)|^2}{|\gamma(t)|}.
\eeqs
By \eqref{Kderest} and Cauchy-Schwarz inequality
\begin{align*}
h'(t)
&\ge K(|\gamma(t)|)|y'-y|^2-\beta_2 \frac{K(|\gamma(t)|)}{|\gamma(t)|} \frac{|\gamma(t)\cdot (y'-y)|^2}{|\gamma(t)|}\\
&\ge K(|\gamma(t)|)|y'-y|^2-\beta_2 \frac{K(|\gamma(t)|)}{|\gamma(t)|} \frac{|\gamma(t)|^2|y'-y|^2}{|\gamma(t)|}
=(1-\beta_2)K(|\gamma(t)|)|y'-y|^2.
\end{align*}

Applying Lemma \ref{lem21}  and triangle inequality $|\gamma(t)|\le |y|+|y'|$, we infer
\beq\label{hp}
h'(t)
\ge  (1-\beta_2)|y'-y|^2  \frac{d_2 |\gamma(t)|^{\beta_1}}{(1+|\gamma(t)|)^{\beta_1+\beta_2} } 
\ge  \frac{d_2 (1-\beta_2)|y'-y|^2}{(1+|y|+|y'|)^{\beta_1+\beta_2}} |\gamma(t)|^{\beta_1}.
\eeq

Together with \eqref{Idef}, it implies 
\beq\label{Igam1}
I \ge  \frac{d_2 (1-\beta_2)|y'-y|^2}{(1+|y|+|y'|)^{\beta_1+\beta_2}} \int_0^1  |\gamma(t)|^{\beta_1} dt.
\eeq


It remains to estimate the last integral.
Let $z_0=\frac{y'+y}{2|y'-y|}$ and $u=\frac{y'-y}{|y'-y|}$.
Note that $|u|=1$. Then we write 
\begin{align*}
\int_0^1  |\gamma(t)|^{\beta_1} dt 
&=|y'-y|^{\beta_1} \int_0^1 |z_0+(t-\frac12)u|^{\beta_1}dt\\
&=|y'-y|^{\beta_1} \int_0^1 \Big(|z_0|^2+2(t-\frac12)z_0\cdot u +(t-\frac12)^2\Big)^{\beta_1/2}dt.
\end{align*}

If $z_0\cdot u\ge 0$ then
\beq\label{ig1}
\int_0^1  |\gamma(t)|^{\beta_1} dt \ge |y'-y|^{\beta_1} \int_{1/2}^1 \Big(t-\frac12\Big)^{\beta_1}dt
= \frac{|y'-y|^{\beta_1}}{2^{\beta_1+1}(\beta_1+1) }  .
\eeq

If $z_0\cdot u< 0$ then
\beq\label{ig2}
\int_0^1  |\gamma(t)|^{\beta_1} dt \ge |y'-y|^{\beta_1} \int_0^{1/2} \Big(\frac12-t\Big)^{\beta_1}dt
= \frac{|y'-y|^{\beta_1}}{2^{\beta_1+1}(\beta_1+1) } .
\eeq

In both cases, we have
\beq\label{Igam2}
I\ge \frac{ d_2(1-\beta_2)}{2^{\beta_1+1}(\beta_1+1)}\cdot \frac{|y'-y|^{\beta_1+2}}{(1+|y|+|y'|)^{\beta_1+\beta_2}},
\eeq
which proves \eqref{Kmono}.

\textbf{Case 2:} The origin belongs to $[y,y']$. We replace $y'$ by some $y_{\epsilon} \neq 0$ such that $0 \not\in[y,y_{\epsilon}]$, and $y_{\epsilon} \to y'$ as $\epsilon \to 0$.
Then let apply the inequality established in Case 1 for $y$ and $y_{\epsilon}$, then let $\epsilon \to 0$. 
\end{proof}

\begin{remark}
Our proof of \eqref{Igam2} from \eqref{Igam1} simplifies  DiBenedetto's arguments in \cite{DiDegenerateBook}, p. 13, 14. 
\end{remark}


\textbf{Degree Condition:} One of the following equivalent conditions
\beqs
 \deg(g)\le \frac{4}{n-2},\quad  \beta_2\le \frac 4{n+2},\quad  2\le (2-\beta_2)^*=\frac{(2-\beta_2)n}{n-2+\beta_2},\quad  2-\beta_2\ge \frac{2n}{n+2}.
\eeqs
(Above, $(2-\beta_2)^*$ is the Sobolev exponent corresponding to $2-\beta_2$.)

Hereafter, we assume the Degree Condition. Then the Sobolev space $W^{1,2-\beta_2}(U)$ is continuously embedded into $L^2(U)$. Also, the Poincar\'e-Sobolev inequality
\beq\label{PSi}
\|u\|_{L^2(U)}\le C_{\rm PS}\|\nabla u\|_{L^{2-\beta_2}(U)}
\eeq
holds for all functions $u\in W^{1,2-\beta_2}(U)$ which vanish on the boundary $\Gamma$, where $C_{\rm PS}$ is a positive constant.

In statements and calculations throughout, we use short-hand writing $\| \cdot\|=\|\cdot\|_{L^2(U)}$ and 
$\| u(t)\|_{L^p}=\|u(\cdot,t)\|_{L^p(U)}$ for a function $u(x,t)$ of $x$ and $t$.


\section{The IBVP and estimates of its solutions}\label{boundsec}

Let $K(\xi)$ be one of the functions $\bar{K}(\xi)$, $K_I(\xi)$, $\hat{K}(\xi)$, $K_M(\xi)$. 
Consider the following IBVP for the main PDE \eqref{peq}:
\beq\label{IBVP}
\begin{aligned}
\begin{cases}
p_t=\nabla\cdot (K(|\nabla p|)\nabla p)  &\text{in }  U\times (0,\infty),\\ 
p(x,0)=p_0(x), &\text{in }  U\\
p=\psi(x,t), &\text{on }  \partial U\times (0,\infty).
\end{cases}
\end{aligned}
\eeq

Dealing with the boundary condition, let $\Psi(x,t)$ be an extension of $\psi$ from $x\in\Gamma$ to $x\in \bar U$.

 Let $\bar p=p-\Psi$.  Then 
\beq\label{IBVP2}
\begin{cases}
\bar p_t=\nabla\cdot (K(|\nabla p|)\nabla p)-\Psi_t &\text{in }  U\times (0,\infty),\\ 
\bar p(x,0)=p_0(x)-\Psi(x,0), &\text{in }  U\\
\bar p=0, &\text {on } \partial U\times (0,\infty).
\end{cases}
\eeq

We will focus on estimates for $\bar p(x,t)$. The estimates for $p(x,t)$ can be obtained by simply using the triangle inequality
$$|p(x,t)|\le |\bar p(x,t)|+|\Psi(x,t)|.$$

Also, our results are stated in terms of $\Psi(x,t)$. These can be rewritten in terms of $\psi(x,t)$ by using a specific extension. For instance, the harmonic extension is utilized in \cite{HI1} with the use of norm relations in \cite{JerisonKenig1995}.

Throughout the paper, we will frequently use the following basic inequalities.
By Young's inequality, we have
\beq\label{bi3}
x^\beta\le x^{\gamma_1}+x^{\gamma_2}\quad\text{for all }x>0,\ \gamma_1\le \beta \le \gamma_2,
\eeq
\beq\label{bi2}
x^\beta\le 1+x^\gamma\quad\text{for all }x\ge 0, \ \gamma\ge\beta > 0.
\eeq

For any $r\ge1$, $x_1,x_2,\ldots,x_k\ge 0$, and $a,b\in\mathbb{R}^n$,
\beq\label{bi0}
(x_1+x_2+\ldots+x_k)^r\le k^{r-1}(x_1^r+x_2^r+\ldots+x_k^r), 
\eeq
\beq\label{bi1}
|a-b|^{r} \ge 2^{1-r}|a|^{r}-|b|^{r}.
\eeq

We also recall here a useful inequality from \cite{HIKS1}.

\begin{definition}\label{Env}
Given a function $f(t)$ defined on  $I=[0,\infty)$. 
We denote by $Env(f)$ a continuous, increasing  function $F(t)$ on $I$ such that $F(t) \ge f(t)$ for all $t \in I$.
\end{definition}


 \begin{lemma}[\cite{HIKS1}, Lemma 2.7] \label{ODE2}
Let $\theta>0$ and let $y(t)\ge 0, h(t)>0, f(t)\ge 0$ be continuous functions on $[0,\infty)$ that satisfy
 \beqs
 y'(t)\le -h(t)y(t)^\theta +f(t)\quad \text{for all } t>0.
 \eeqs
Then
 \beq\label{ubode}
 y(t)\le y(0)+\big[Env(f(t)/h(t))\big]^\frac{1}{\theta}\text{ for all } t\ge 0.
 \eeq
If $\int_0^\infty h(t)dt=\infty$ then
 \beq\label{ulode}
 \limsup_{t\rightarrow\infty} y(t)\le \limsup_{t\rightarrow\infty} \big[f(t)/h(t)\big]^\frac{1}{\theta}.
\eeq
 \end{lemma}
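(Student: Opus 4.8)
The plan is to treat both assertions by a barrier (comparison) argument built directly on the differential inequality, without ever solving an ODE explicitly. The mechanism is the same in both parts: above a suitable threshold the hypothesis forces $y' < 0$, so the solution is pinned down from above.

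For \eqref{ubode} I would introduce the barrier $M(t) = [Env(f(t)/h(t))]^{1/\theta}$, writing $F = Env(f/h)$ so that by Definition \ref{Env} the function $F$ is continuous, increasing, and satisfies $F(t) \ge f(t)/h(t)$; hence $M$ is continuous and increasing with $M(t)^\theta = F(t) \ge f(t)/h(t)$. The crucial observation is that whenever $y(t) > M(t)$ we get $y(t)^\theta > F(t) \ge f(t)/h(t)$, so $h(t) y(t)^\theta > f(t)$ and therefore $y'(t) < 0$: the solution strictly decreases above the barrier. Fixing $t>0$ (the case $t=0$ being trivial since $M(0)\ge 0$), I would consider the last instant $t_0 = \sup\{ s\in[0,t] : y(s) \le M(s)\}$ at which $y$ lies on or below the barrier. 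If this set is empty, then $y(s) > M(s)$ throughout $[0,t]$, so $y' < 0$ there and $y(t) \le y(0)$. Otherwise $y(t_0) \le M(t_0)$ by continuity, while $y > M$ on $(t_0, t]$ forces $y$ to be strictly decreasing on $[t_0, t]$; using that $M$ is increasing, $y(t) \le y(t_0) \le M(t_0) \le M(t)$. In either case $y(t) \le y(0) + M(t)$, which is exactly \eqref{ubode}.

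For \eqref{ulode}, set $L = \limsup_{t\to\infty}[f(t)/h(t)]^{1/\theta}$; the claim is trivial when $L=\infty$, so assume $L$ finite. Fix $\epsilon>0$ and choose $T$ with $f(t)/h(t) \le (L+\epsilon)^\theta$ for $t\ge T$. Two facts then drive the argument: first, the same barrier reasoning gives $y'(t) < 0$ whenever $t\ge T$ and $y(t) > L+\epsilon$; second, a quantitative refinement shows that when $y(t) \ge L+2\epsilon$ one has $y'(t) \le -h(t)[y(t)^\theta - (L+\epsilon)^\theta] \le -c_\epsilon h(t)$ with $c_\epsilon = (L+2\epsilon)^\theta - (L+\epsilon)^\theta > 0$. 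Since $\int_T^\infty h\, dt = \infty$, the solution cannot satisfy $y(t) \ge L+2\epsilon$ for all $t\ge T$ (else $y\to-\infty$, contradicting $y\ge 0$), so $y(t_1) < L+2\epsilon$ at some $t_1\ge T$; the barrier at level $L+\epsilon$ then confines $y$, by a last-exit argument identical to the one above, so that $y(t)\le L+2\epsilon$ for all $t\ge t_1$. Hence $\limsup_{t\to\infty} y(t) \le L+2\epsilon$, and letting $\epsilon\searrow 0$ yields \eqref{ulode}.

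The elementary manipulations of the inequality are routine; the step that needs genuine care is \eqref{ulode}. The barrier argument alone only bounds $y$ in terms of its earlier values and cannot by itself push $\limsup y$ down to $L$. The essential use of the hypothesis $\int_0^\infty h = \infty$ is precisely to upgrade the sign condition $y'<0$ into genuine, nonvanishing decay $y'\le -c_\epsilon h$, which forces $y$ to actually fall below the barrier. I expect this interplay, together with the continuity bookkeeping for the last-exit times, to be the main obstacle.
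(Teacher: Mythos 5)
Your proof is correct: the last-exit barrier argument (using that $y'<0$ whenever $y$ exceeds the increasing barrier $[Env(f/h)]^{1/\theta}$) gives \eqref{ubode}, and for \eqref{ulode} you correctly identify that the sign condition alone is insufficient and supply the needed quantitative decay $y'\le -c_\epsilon h(t)$ above level $L+2\epsilon$, which together with $\int_0^\infty h\,dt=\infty$ forces $y$ below that level and keeps it there. Note that the paper itself offers no proof of this statement (it is quoted from Lemma 2.7 of \cite{HIKS1}), and your comparison argument is essentially the standard one used there, so there is nothing to flag.
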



\noindent\textbf{Notation for constants.} In this section and section \ref{dependsec} below, the symbol $C$ denotes a \emph{generic} positive constant independent of the initial and boundary data; it may depend on the function $g(s)$ and the Poincar\'e-Sobolev constant $C_{PS}$ in \eqref{PSi}. In a particular proof, $C_0,C_1,\dots$ denote  positive constants of this type  but having their values fixed.

\subsection{Energy estimates }\label{L2sec}

In this subsection, we obtain $L^2$-estimates for the solution $p(x,t)$ for all time $t\ge 0$ and for $t\to\infty$.

\begin{theorem}\label{Lem31} 
There exists a positive constant $C$ such that for all $t\ge 0,$
\beq\label{west}
 \|\bar p(t)\|^2\le \|\bar p(0)\|^2+ C \big[1+Env f(t)\big]^\frac{2}{2-\beta_2},
 \eeq
where 
\beq \label{fdef}
f(t)=f[\Psi](t)\eqdef \norm{\nabla \Psi(t)}^2+\norm{\Psi_t (t)}^{\frac{2-\beta_2}{1-\beta_2}}.
\eeq
Furthermore, 
\beq\label{nodelta}
\begin{aligned}
\limsup_{t\to \infty} \|\bar p(t)\|^2&\le C (1+\limsup_{t\to\infty} f(t))^\frac{ 2}{2-\beta_2}. 
\end{aligned}
\eeq
\end{theorem}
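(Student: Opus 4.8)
The plan is to derive a single differential inequality for $y(t)=\|\bar p(t)\|^2$ and then invoke the ODE comparison Lemma~\ref{ODE2}. First I would multiply the PDE in \eqref{IBVP2} by $\bar p$, integrate over $U$, and integrate by parts in the divergence term; since $\bar p$ vanishes on $\partial U$ there is no boundary contribution, so that
\[
\frac12\frac{d}{dt}\|\bar p\|^2 = -\int_U K(|\nabla p|)\nabla p\cdot\nabla\bar p\,dx - \int_U \Psi_t\,\bar p\,dx .
\]
Writing $\nabla\bar p=\nabla p-\nabla\Psi$ splits the first integral into the dissipation term $-\int_U K(|\nabla p|)|\nabla p|^2\,dx$ and a cross term $\int_U K(|\nabla p|)\nabla p\cdot\nabla\Psi\,dx$. (I would treat this as an a priori estimate for a sufficiently regular solution, the justification being standard via Galerkin approximation.)

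The heart of the estimate is the two-sided control of $K$ from Lemma~\ref{lem21}. For the dissipation term I would apply \eqref{mc2} with $m=2$ and a fixed $\delta$ (say $\delta=1$) to get the pointwise bound $K(|\nabla p|)|\nabla p|^2\ge C|\nabla p|^{2-\beta_2}-C$, hence $\int_U K(|\nabla p|)|\nabla p|^2\,dx\ge C\|\nabla p\|_{L^{2-\beta_2}}^{2-\beta_2}-C$. For the cross term I would use the upper bound in \eqref{mc2} with $m=1$, namely $K(|\nabla p|)|\nabla p|\le d_3|\nabla p|^{1-\beta_2}$, followed by Young's inequality with conjugate exponents $\tfrac{2-\beta_2}{1-\beta_2}$ and $2-\beta_2$ applied to $|\nabla p|^{1-\beta_2}$ and $|\nabla\Psi|$; this produces $\epsilon\|\nabla p\|_{L^{2-\beta_2}}^{2-\beta_2}+C_\epsilon\int_U|\nabla\Psi|^{2-\beta_2}\,dx$, and choosing $\epsilon$ small absorbs the gradient power into the dissipation term. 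The remaining term is estimated by Cauchy--Schwarz as $\|\Psi_t\|\,\|\bar p\|$ and then by Young's inequality with the same pair of exponents in the opposite roles, which yields precisely $\epsilon'\|\bar p\|^{2-\beta_2}+C_{\epsilon'}\|\Psi_t\|^{(2-\beta_2)/(1-\beta_2)}$, matching the second term of $f(t)$ in \eqref{fdef}.

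Next I would convert the gradient dissipation into control of $\|\bar p\|$. Using the triangle inequality $\|\nabla\bar p\|_{L^{2-\beta_2}}\le\|\nabla p\|_{L^{2-\beta_2}}+\|\nabla\Psi\|_{L^{2-\beta_2}}$ together with \eqref{bi0} (valid since $2-\beta_2>1$) and the Poincar\'e--Sobolev inequality \eqref{PSi}, I obtain $\|\nabla p\|_{L^{2-\beta_2}}^{2-\beta_2}\ge C\|\bar p\|^{2-\beta_2}-\|\nabla\Psi\|_{L^{2-\beta_2}}^{2-\beta_2}$. All the stray $\nabla\Psi$ contributions are then bounded, via H\"older on the bounded domain $U$ and \eqref{bi2}, by $C(1+\|\nabla\Psi\|^2)$, the first term of $f(t)$. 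Collecting everything and absorbing the $\epsilon,\epsilon'$ terms gives the differential inequality $y'(t)\le -C_0\,y(t)^{(2-\beta_2)/2}+C_1\big(1+f(t)\big)$.

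Finally I would apply Lemma~\ref{ODE2} with $\theta=(2-\beta_2)/2$, constant $h\equiv C_0$, and right-hand side $C_1(1+f(t))$. Estimate \eqref{ubode} gives $\|\bar p(t)\|^2\le\|\bar p(0)\|^2+\big[Env(C(1+f(t)))\big]^{2/(2-\beta_2)}$, and since $C(1+Env f(t))$ is a continuous increasing majorant of $C(1+f(t))$ this is exactly \eqref{west}. For \eqref{nodelta}, since $\int_0^\infty h\,dt=\infty$ ($h$ being a positive constant), estimate \eqref{ulode} yields the $\limsup$ bound directly. The main obstacle I anticipate is the bookkeeping in the second step: one must choose the Young's-inequality exponents so that the $\Psi_t$ term reproduces exactly the power $(2-\beta_2)/(1-\beta_2)$ appearing in $f(t)$, while retaining enough of the degenerate dissipation to dominate $\|\bar p\|^{2-\beta_2}$ after Poincar\'e--Sobolev; aligning these exponents and the signs of the absorbed terms is the delicate part.
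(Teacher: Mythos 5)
Your proposal is correct and follows essentially the same route as the paper's proof: the same energy identity, the degeneracy bounds of Lemma~\ref{lem21}, the Poincar\'e--Sobolev inequality \eqref{PSi} to convert dissipation into control of $\|\bar p\|$, and Lemma~\ref{ODE2} for both \eqref{west} and \eqref{nodelta}. The only cosmetic differences are that you bound the cross term via the degenerate upper bound $K(\xi)\xi\le d_3\xi^{1-\beta_2}$ plus Young's inequality (the paper instead uses the uniform bound \eqref{Kbdd} on $K$ with Cauchy's inequality, producing $\|\nabla\Psi\|^2$ directly), and you split $\nabla p$ from $\nabla\bar p$ at the norm level by the triangle inequality rather than pointwise via \eqref{bi1}; both variants arrive at the same differential inequality $y'\le -C_0 y^{(2-\beta_2)/2}+C(1+f(t))$.
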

\begin{proof}
Multiplying both sides of first equation in \eqref{IBVP2} by $\bar p$, integrating over the domain $U$ and using integration by parts we find that
\beq\label{weq}
\begin{aligned}
\frac 12\frac d{dt}\|\bar p\|^2 &=-\int_UK(|\nabla p|)\nabla p\cdot \nabla \bar p dx-\int_U\Psi_t \bar pdx\\
&= -\int_UK(|\nabla p|)|\nabla p|^2dx+\int_U K(|\nabla p|)\nabla p \cdot \nabla\Psi dx-\int_U\Psi_t \bar pdx.
\end{aligned}
\eeq

Using Cauchy's inequality and the bound \eqref{Kbdd} for function $K(\cdot)$, we obtain   
\begin{align*}
\int_U K(|\nabla p|)\nabla p \cdot \nabla\Psi dx 
&\le \frac12\int_UK(|\nabla p|)|\nabla p|^2dx + 
\frac12\int_UK(|\nabla p|)|\nabla \Psi|^2dx\\
&\le \frac12\int_UK(|\nabla p|)|\nabla p|^2dx +  C\norm{\nabla\Psi}^2.
\end{align*}

Let $\varep>0$. By H\"older's and Young's inequalities, 
\beqs
-\int_U\Psi_t \bar pdx \le\|\bar p\| \norm{\Psi_t} \le  \varep\|\bar p\|^{2-\beta_2}+C\varep^{-\frac{1}{1-\beta_2}}\norm{\Psi_t}^{\frac{2-\beta_2}{1-\beta_2}}.
\eeqs
Therefore,
\beqs
\frac d{dt}\|\bar p\|^2 \le -\int_UK(|\nabla p|)|\nabla p|^2dx+2\varep\|\bar p\|^{2-\beta_2}+C\norm{\nabla\Psi}^2+C\varep^{-\frac{1}{1-\beta_2}}\norm{\Psi_t}^{\frac{2-\beta_2}{1-\beta_2}}.
\eeqs

Let $\delta \in(0,1]$. By virtue of \eqref{mc2} and applying \eqref{bi1} to $r=2-\beta_2$, $a=\nabla \bar p$, $b=-\nabla \Psi$, we have 
\begin{align*}
K(|\nabla p|)|\nabla p|^2
&\ge d_2 \Big(\frac{\delta}{1+\delta}\Big)^{\beta_1+\beta_2} (|\nabla p|^{2-\beta_2}-\delta^{2-\beta_2}) \\
&\ge d_2\Big(\frac \delta {1+\delta} \Big)^{\beta_1+\beta_2}\Big(2^{\beta_2-1}|\nabla \bar p|^{2-\beta_2}-|\nabla \Psi|^{2-\beta_2}-\delta^{2-\beta_2}\Big)\\
&\ge \frac{d_2\delta^{\beta_1+\beta_2}}{2^{\beta_1+1}}|\nabla \bar p|^{2-\beta_2}- d_2|\nabla \Psi|^{2-\beta_2}-d_2\delta^{2+\beta_1}.
\end{align*}

Hence, we obtain  
\begin{multline}\label{psi-delta}
\frac d{dt}\norm {\bar p}^2
\le - \frac{d_2\delta^{\beta_1+\beta_2}}{2^{\beta_1+1}}\int_{U}|\nabla \bar p|^{2-\beta_2}dx+C\int_{U}|\nabla \Psi|^{2-\beta_2}dx + C{\delta^{2+\beta_1}}\\
+2\varep\|\bar p\|^{2-\beta_2} +C\norm{\nabla\Psi}^2 +C\varep^{-\frac{1}{1-\beta_2}}\norm{\Psi_t}^{\frac{2-\beta_2}{1-\beta_2}}.
\end{multline}

Using Poincar\'e--Sobolev's inequality \eqref{PSi}, we bound $\int_{U}|\nabla \bar p|^{2-\beta_2}dx$ from below by
\beq\label{pP}\int_{U}|\nabla \bar p|^{2-\beta_2}dx \ge \frac{   \|\bar p\|^{2-\beta_2}}{C_{\rm PS}^{2-\beta_2} }.
\eeq

For comparison of $\nabla \Psi$-terms on the right-hand side of \eqref{psi-delta}, applying H\"older's inequality gives  
 $$
\int_{U}|\nabla \Psi|^{2-\beta_2}dx \le C \|\nabla \Psi\|^{2-\beta_2}.
  $$ 
  
Then  we have from \eqref{psi-delta} that
\beqs
\frac d{dt}\norm {\bar p}^2 
\le - \Big(\frac {d_2 \delta^{\beta_1+\beta_2} } {2^{1+\beta_1} C_{\rm PS}^{2-\beta_2} }  - 2\varep\Big) \|\bar p\|^{2-\beta_2}+ C\Big({\delta^{2+\beta_1}}+\|\nabla \Psi\|^{2-\beta_2}+\norm{\nabla\Psi}^2+\varep^{-\frac{1}{1-\beta_2}}\norm{\Psi_t}^{\frac{2-\beta_2}{1-\beta_2}}\Big) .
\eeqs

Selecting $\varep =d_2 \delta^{\beta_1+\beta_2} /(2^{3+\beta_1} C_{\rm PS}^{2-\beta_2})$  yields 
\beq\label{psi-delta-2}
\frac d{dt}\norm {\bar p}^2 
\le - \frac {d_2 \delta^{\beta_1+\beta_2} } {2^{2+\beta_1} C_{\rm PS}^{2-\beta_2} }   \|\bar p\|^{2-\beta_2}+ C\Big({\delta^{2+\beta_1}}+\|\nabla \Psi\|^{2-\beta_2}+\norm{\nabla\Psi}^2+\delta^{-\frac{\beta_1+\beta_2}{1-\beta_2}}\norm{\Psi_t}^{\frac{2-\beta_2}{1-\beta_2}}\Big) .
\eeq

Denote $y(t)=\|\bar p(t)\|^2$. We rewrite \eqref{psi-delta-2} as
\beq\label{ydif}
\frac {dy}{dt} 
\le - C_0 \delta^{\beta_1+\beta_2} y^\frac{2-\beta_2}2+ C\Big({\delta^{2+\beta_1}}+\|\nabla \Psi\|^{2-\beta_2}+\norm{\nabla\Psi}^2 +\delta^{-\frac{\beta_1+\beta_2}{1-\beta_2}}\norm{\Psi_t}^{\frac{2-\beta_2}{1-\beta_2}}\Big),
\eeq
where
\beqs
C_0=\frac {d_2} {2^{2+\beta_1} C_{\rm PS}^{2-\beta_2} } .
\eeqs

Select  $\delta =1$. On the RHS of \eqref{ydif}, we apply inequality \eqref{bi2} to have
\beq\label{psH}
 \|\nabla \Psi\|^{2-\beta_2}\le 1 +\|\nabla \Psi\|^2 .
\eeq 
 Then
\beq\label{pd3}
\frac {d y}{dt}
\le - C_0  y^\frac{2-\beta_2}2+ C(1+f(t)).
\eeq
Applying \eqref{ubode} and  \eqref{ulode}  in Lemma \ref{ODE2} to \eqref{pd3}, we obtain    \eqref{west} and \eqref{nodelta}, respectively.
\end{proof}

In case the boundary data is asymptotically small as $t\to\infty$, we prove in the next theorem that so is $\|\bar p(t)\|$.

\begin{theorem}\label{pasmall}
For any $\varep>0$, there is $\delta_0>0$ such that if 
\beq\label{small0}
\limsup_{t\to\infty} \|\nabla \Psi(t)\|\le \delta_0 \quad \text{and} \quad  \limsup_{t\to\infty} \| \Psi_t( t)\|\le \delta_0,
\eeq
then 
\beq\label{wlim0}
 \limsup_{t\to\infty}\| \bar p(t) \| \le  \varep .
\eeq

Consequently, if 
\beq\label{limcond}
\lim_{t\to\infty} \|\nabla \Psi(t)\|= \lim_{t\to\infty} \| \Psi_t( t)\|=0,
\eeq
 then
\beq\label{wlim}
\lim_{t\to\infty}\| \bar p(t) \| = 0.
\eeq
\end{theorem}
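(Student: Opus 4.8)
The plan is to revisit the differential inequality \eqref{ydif} established in the proof of Theorem~\ref{Lem31}, but this time to exploit the freedom in the auxiliary parameter $\delta\in(0,1]$ rather than freezing $\delta=1$. Recall that, with $y(t)=\|\bar p(t)\|^2$, inequality \eqref{ydif} has the form $y'\le -h\,y^{\theta}+f(t)$, where $h=C_0\delta^{\beta_1+\beta_2}>0$ is a constant, $\theta=(2-\beta_2)/2$, and
$$f(t)=C\Big(\delta^{2+\beta_1}+\|\nabla\Psi(t)\|^{2-\beta_2}+\|\nabla\Psi(t)\|^2+\delta^{-\frac{\beta_1+\beta_2}{1-\beta_2}}\|\Psi_t(t)\|^{\frac{2-\beta_2}{1-\beta_2}}\Big).$$
Since $h$ is a positive constant, $\int_0^\infty h\,dt=\infty$, so the second conclusion \eqref{ulode} of Lemma~\ref{ODE2} applies and yields $\displaystyle\limsup_{t\to\infty}y(t)\le\big(\limsup_{t\to\infty}f(t)/h\big)^{2/(2-\beta_2)}$.

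Next I would bound $\limsup_t f(t)/h$ under hypothesis \eqref{small0}. Using that $x\mapsto x^q$ is continuous and increasing for $q>0$, together with $\limsup_t\|\nabla\Psi(t)\|\le\delta_0$ and $\limsup_t\|\Psi_t(t)\|\le\delta_0$, every data-dependent term is controlled by a positive power of $\delta_0$, giving a bound of the schematic form
$$\limsup_{t\to\infty}\frac{f(t)}{h}\le C_1\delta^{2-\beta_2}+\frac{C_1}{\delta^{\beta_1+\beta_2}}\big(\delta_0^{2-\beta_2}+\delta_0^2\big)+C_1\,\delta^{-(\beta_1+\beta_2)\frac{2-\beta_2}{1-\beta_2}}\,\delta_0^{\frac{2-\beta_2}{1-\beta_2}},$$
where $C_1=C/C_0$. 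The crucial structural observation is that the first term is the only one independent of the boundary data; it comes purely from the approximation error in the degenerate lower bound \eqref{mc2}, and it tends to $0$ as $\delta\to0$ because $2-\beta_2>0$.

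This dictates a two-stage choice of constants. First I would fix $\delta=\delta_1\in(0,1]$ small enough that $C_1\delta_1^{2-\beta_2}\le\tfrac12\varepsilon^{2-\beta_2}$; this freezes all the $\delta$-dependent coefficients in the remaining terms. Then, with $\delta_1$ a fixed constant, I would choose $\delta_0>0$ small enough that the three data terms (whose coefficients now depend only on $\delta_1$) sum to at most $\tfrac12\varepsilon^{2-\beta_2}$, which is possible since each is a fixed constant times a positive power of $\delta_0$. Combining, $\limsup_t f(t)/h\le\varepsilon^{2-\beta_2}$, whence $\limsup_t\|\bar p(t)\|^2=\limsup_t y(t)\le\varepsilon^2$, which is \eqref{wlim0}.

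The main (and essentially only) obstacle is precisely this balancing: one cannot send $\delta\to0$ uniformly, since shrinking $\delta$ inflates the coefficients multiplying the data terms. The resolution is the ordering just described — pick $\delta$ first to kill the self-generated term, then pick $\delta_0$ afterward depending on the now-fixed $\delta$ — and everything else is routine. Finally, the consequence \eqref{wlim} is immediate: if \eqref{limcond} holds, then \eqref{small0} is satisfied for every $\delta_0>0$, so \eqref{wlim0} gives $\limsup_t\|\bar p(t)\|\le\varepsilon$ for each $\varepsilon>0$; letting $\varepsilon\searrow0$ and using $\|\bar p(t)\|\ge0$ yields $\lim_{t\to\infty}\|\bar p(t)\|=0$.
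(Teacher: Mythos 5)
Your proposal is correct and follows essentially the same route as the paper: both revisit the differential inequality \eqref{ydif} with the parameter $\delta$ left free, apply the limit-superior conclusion \eqref{ulode} of Lemma~\ref{ODE2}, and then make the two-stage choice of constants (first $\delta$ to absorb the self-generated $\delta$-term, then $\delta_0$ depending on the now-fixed $\delta$). The only difference is cosmetic: the paper distributes the power $\tfrac{2}{2-\beta_2}$ across the terms via \eqref{bi0} before choosing constants, whereas you raise to that power at the very end.
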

\begin{proof}
Let $\delta\in(0,1]$. 
Applying \eqref{ulode} in Lemma \ref{ODE2} to \eqref{ydif}, and then using inequality \eqref{bi0} with $r=2/(2-\beta_2)$ give      
\begin{align}\label{f-delta}
&\limsup_{t\to \infty} \|\bar p(t)\|^2 \notag \\
&\le C   \Big\{\delta^{2-\beta_2} + \limsup_{t\to\infty} \Big[\delta^{-(\beta_1+\beta_2)}\big(\|\nabla \Psi\|^{2-\beta_2}+\norm{\nabla\Psi}^2\big)+\delta^{-\frac{(\beta_1+\beta_2)(2-\beta_2)}{1-\beta_2} } \norm{\Psi_t}^{\frac{2-\beta_2}{1-\beta_2}}\Big]    \Big\}^\frac{ 2}{2-\beta_2} \notag \\
&\le C   \Big\{\delta^{2} + \limsup_{t\to\infty} \Big[\delta^\frac{-2(\beta_1+\beta_2)}{2-\beta_2}\big(\|\nabla \Psi\|^2+\norm{\nabla\Psi}^\frac{4}{2-\beta_2}\big)+\delta^{-\frac{2(\beta_1+\beta_2)}{1-\beta_2} } \norm{\Psi_t}^{\frac{2}{1-\beta_2}}\Big]    \Big\}. 
\end{align}

Assume \eqref{small0} with $0<\delta_0\le 1$. It follows \eqref{f-delta} that 
 \beq\label{small1}
\limsup_{t\to \infty} \|\bar p(t)\|^2
\le C_1 \Big(\delta^{2}+3\delta^{-\frac{2(\beta_1+\beta_2)}{1-\beta_2} } \delta_0^2   \Big)
\eeq
for some $C_1>0$.
We choose $\delta$ sufficiently small so that $C_1 \delta^2\le \varep^2/2$, and then, with such $\delta$,  choose $\delta_0$ to satisfy 
\beqs
3C_1  \delta^{-\frac{2(\beta_1+\beta_2)}{1-\beta_2} } \delta_0^2\le 
  \varep^2/2.
\eeqs   
Therefore, the desired estimate \eqref{wlim0} follows  \eqref{small1}. 

In the case \eqref{limcond} is satisfied, we have  \eqref{wlim0} holds for any $\varep>0$, which implies \eqref{wlim}.
\end{proof}

\subsection{Gradient estimates}\label{gradsec}

This subsection is focused on estimating the $L^{2-\beta_2}$-norm for $\nabla p(x,t)$.
The following function $H(\xi)$ will be crucial in our gradient estimates.

\begin{definition}
Define for $\xi\ge 0$, the auxiliary function
\beq\label{Hdef}
H(\xi)=\int_0^{\xi^2} K(\sqrt s) ds.
\eeq
\end{definition}

We compare $H(\xi)$ with $K(\xi)\xi^2$ and $\xi^{2-\beta_2}$ in the following lemma.

\begin{lemma}
For any $\xi\ge 0$,
\beq\label{HtoK}
  \frac{d_2}{d_3} K(\xi)\xi^2 -d_2 K_*(\xi_c) \xi_c^2\le H(\xi)\le 2 K(\xi)\xi^2.
\eeq

For any $\delta>0$ and $\xi\ge 0$,
\beq\label{HK2}
d_2 \Big(\frac\delta{1+\delta}\Big)^{\beta_1+\beta_2} \big(\xi^{2-\beta_2}-\delta^{2-\beta_2}\big)\le  H(\xi)
  \le   2d_3 \xi^{2-\beta_2}.
\eeq

\end{lemma}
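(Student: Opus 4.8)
The plan is to reduce all four bounds to a single integral representation. Substituting $s=u^2$ in \eqref{Hdef} yields
\beqs
H(\xi)=2\int_0^\xi u K(u)\,du .
\eeqs
First I would dispatch the two upper bounds. By Corollary \ref{incor} (with $m=1\ge\beta_2$) the integrand $uK(u)$ is increasing, so $uK(u)\le \xi K(\xi)$ on $[0,\xi]$ and hence $H(\xi)\le 2\xi^2K(\xi)$, which is the right inequality in \eqref{HtoK}. Combining this with the upper bound $K(\xi)\xi^2\le d_3\xi^{2-\beta_2}$ from \eqref{mc2} (taking $m=2$) immediately gives the upper bound in \eqref{HK2}.

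For the lower bounds I would first use \eqref{mc1} to replace $K$ by $K_*$, writing $H(\xi)\ge d_2 H_*(\xi)$ with $H_*(\xi)=2\int_0^\xi uK_*(u)\,du$, and then work with the explicit smooth function $K_*$. For the lower bound in \eqref{HtoK}, set $g(\xi)=H_*(\xi)-K_*(\xi)\xi^2+K_*(\xi_c)\xi_c^2$. Since $H_*'(\xi)=2\xi K_*(\xi)$, a direct computation gives $g'(\xi)=-\xi^2K_*'(\xi)$, so by (P1)--(P2) the sign of $g'$ changes from $-$ to $+$ exactly at $\xi_c$; thus $g$ attains its minimum at $\xi_c$, where $g(\xi_c)=H_*(\xi_c)\ge0$. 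Hence $H_*(\xi)\ge K_*(\xi)\xi^2-K_*(\xi_c)\xi_c^2$, and using $K_*(\xi)\ge K(\xi)/d_3$ from \eqref{mc1} turns $d_2H_*(\xi)$ into the claimed lower bound $\tfrac{d_2}{d_3}K(\xi)\xi^2-d_2K_*(\xi_c)\xi_c^2$.

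The delicate point is the lower bound in \eqref{HK2}, where the constant must be exactly $d_2(\tfrac{\delta}{1+\delta})^{\beta_1+\beta_2}$; a crude termwise integration of \eqref{mc2} with $m=1$ produces a factor $\tfrac{2}{2-\beta_2}$ and a spurious term $\delta^{1-\beta_2}\xi$, so it does not yield the clean form. Instead I would prove $H_*(\xi)\ge(\tfrac{\delta}{1+\delta})^{\beta_1+\beta_2}(\xi^{2-\beta_2}-\delta^{2-\beta_2})$ directly. For $0\le\xi\le\delta$ the right-hand side is $\le0\le H_*(\xi)$, so the inequality is trivial. For $\xi\ge\delta$, set $\Psi(\xi)=H_*(\xi)-(\tfrac{\delta}{1+\delta})^{\beta_1+\beta_2}(\xi^{2-\beta_2}-\delta^{2-\beta_2})$, note $\Psi(\delta)=H_*(\delta)\ge0$, and compute $\Psi'(\xi)=2\xi K_*(\xi)-(2-\beta_2)(\tfrac{\delta}{1+\delta})^{\beta_1+\beta_2}\xi^{1-\beta_2}$. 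Dividing by $\xi^{1-\beta_2}$ and using $K_*(\xi)\xi^{\beta_2}=(\tfrac{\xi}{1+\xi})^{\beta_1+\beta_2}$, the condition $\Psi'(\xi)\ge0$ becomes $2(\tfrac{\xi}{1+\xi})^{\beta_1+\beta_2}\ge(2-\beta_2)(\tfrac{\delta}{1+\delta})^{\beta_1+\beta_2}$; at $\xi=\delta$ this reduces to $2\ge2-\beta_2$, which holds, and since $\xi\mapsto\xi/(1+\xi)$ is increasing it persists for all $\xi\ge\delta$. Thus $\Psi$ is nondecreasing on $[\delta,\infty)$ and $\Psi(\xi)\ge\Psi(\delta)\ge0$; multiplying by $d_2$ gives the lower bound in \eqref{HK2}. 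The main obstacle throughout is precisely this matching of constants in the last step: everything else follows mechanically from \eqref{mc1}, \eqref{mc2}, Corollary \ref{incor} and (P1)--(P2), but obtaining the sharp coefficient forces the threshold-monotonicity argument rather than a direct integration.
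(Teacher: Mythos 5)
Your proof is correct, but your handling of the two lower bounds takes a genuinely different route from the paper's. The two upper bounds coincide with the paper's argument exactly (monotonicity of $sK(s)$ from Corollary \ref{incor}, then \eqref{mc2} with $m=2$). For the lower bounds, the paper never differentiates anything: for \eqref{HtoK} it splits into the cases $\xi\le\xi_c$ and $\xi>\xi_c$ and estimates $\int_{\xi_c^2}^{\xi^2}K_*(\sqrt s)\,ds$ from below using (P2)--(P3) directly; for \eqref{HK2} it writes $H(\xi)\ge 2\int_\delta^\xi K(s)s^{\beta_2}\,s^{1-\beta_2}\,ds$, uses Corollary \ref{incor} to pull out the increasing factor $K(s)s^{\beta_2}$ at $s=\delta$, discards the harmless factor $\tfrac{2}{2-\beta_2}\ge 1$, and only then applies \eqref{mc1} at the single point $\delta$, which yields $K(\delta)\delta^{\beta_2}\ge d_2\big(\tfrac{\delta}{1+\delta}\big)^{\beta_1+\beta_2}$ and hence the sharp constant immediately. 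You instead reduce everything to the explicit function $K_*$ via $H\ge d_2H_*$ and run first-derivative tests: $g'=-\xi^2K_*'$ for \eqref{HtoK}, and the comparison function $\Psi$ with the threshold condition $2\big(\tfrac{\xi}{1+\xi}\big)^{\beta_1+\beta_2}\ge(2-\beta_2)\big(\tfrac{\delta}{1+\delta}\big)^{\beta_1+\beta_2}$ for \eqref{HK2}. Both are valid; your observation that naive termwise integration of \eqref{mc2} with $m=1$ fails to give the clean form is accurate, and both you and the paper circumvent it, just by different mechanisms. The paper's fix (integrating from $\delta$ rather than $0$, plus one pointwise use of \eqref{mc1}) is shorter; your fix confines all dependence on the general $K$ to the single inequality $H\ge d_2H_*$, after which the constant-matching is a transparent calculus exercise on the explicit $K_*$ --- a separation that also makes clear why no smoothness of $K$ itself (relevant for $K=\bar K$, which is not differentiable at $Z_1,Z_2$) is ever needed.
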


\begin{proof}
By Corollary \ref{incor}, the function $K(\xi)\xi$ is increasing, hence we have
\beq\label{HKtwice}
H(\xi)=2\int_0^{\xi} K(s)s ds\le 2K(\xi)\xi\int_0^\xi 1ds=2K(\xi)\xi^2.
\eeq
This proves the second inequality of \eqref{HtoK}.
Combining this with the second inequality in \eqref{mc2} for $m=2$ yields the second inequality in \eqref{HK2}.

By Lemma \ref{lem21},
\beq
H(\xi)\ge d_2 \int_0^{\xi^2} K_*(\sqrt s) ds.
\eeq


For $\xi>\xi_c$, using properties (P2) and (P3) of $K_*(\xi)$ in section \ref{presec}, we have
\beq
H(\xi)\ge d_2K_*(\xi) \int_{\xi_c^2}^{\xi^2} 1 ds
= d_2 K_*(\xi)(\xi^2-\xi_c^2)
\ge d_2( K_*(\xi)\xi^2-K_*(\xi_c)\xi_c^2 ).
\eeq

For $\xi\le\xi_c$, according to Corollary~\ref{incor}  the function $\xi^2 K(\xi)$ is increasing thus  
\beq
 d_2( K_*(\xi)\xi^2-K_*(\xi_c)\xi_c^2 ) \le 0\le H(\xi).
\eeq
Combining the above two inequalities we have
\beq\label{HKstar}
 d_2(K_*(\xi)\xi^2 - K_*(\xi_c) \xi_c^2)\le H(\xi).
\eeq

Applying \eqref{mc1} in Lemma \ref{lem21} to compare $K_*(\xi)$ with $K(\xi)$ in \eqref{HKstar} yields
\beq
 \frac{d_2}{d_3} K(\xi)\xi^2 -d_2K_*(\xi_c)\xi_c^2  \le H(\xi).
\eeq
Hence, we obtain the first inequality of \eqref{HtoK}.

Next, we prove the first inequality of \eqref{HK2}.
Since it trivially holds true for all $\xi\le\delta$, it suffices to consider  $\xi>\delta$. 
From \eqref{HKtwice},   
\beqs
H(\xi)\ge 2 \int_{\delta}^{\xi}K(s)s ds = 2 \int_{\delta}^{\xi}K(s)s^{\beta_2}s^{1-\beta_2} ds.
\eeqs  

According to  Corollary~\ref{incor}, the function $K(s)s^{\beta_2}$ is increasing thus 
\beqs
H(\xi)\ge 2 K(\delta)\delta^{\beta_2} \int_{\delta}^{\xi}s^{1-\beta_2}ds =\frac{2}{2-\beta_2} K(\delta)\delta^{\beta_2}\big(\xi^{2-\beta_2}-\delta^{2-\beta_2}\big)
\ge K(\delta)\delta^{\beta_2}\big(\xi^{2-\beta_2}-\delta^{2-\beta_2}\big),
\eeqs
which, together with \eqref{mc1}, proves the first inequality of \eqref{HK2}.
 The proof is complete.
\end{proof}

Bounds for the gradient in terms of the initial and boundary data are obtained in the next theorem.

\begin{theorem}\label{grad-est} 
For all $t\ge 0$, 
\beq\label{Hest2}
\begin{aligned}
\int_U |\nabla p(x,t)|^{2-\beta_2}dx 
&\le  C\Big(1+\|\bar p(0)\|^2 +e^{-\frac t 2} \int_U |\nabla p(x,0)|^{2-\beta_2}dx \\
&\quad + \big[Env f(t) \big]^\frac{2}{2-\beta_2}+\int_0^t e^{-\frac 12(t-\tau)} \norm{\nabla\Psi_t(\tau)}^2d\tau\Big).
\end{aligned}
\eeq

Furthermore, 
\beq\label{limsupG}
\limsup_{t\to\infty} \int_U |\nabla  p(x,t)|^{2-\beta_2}dx \le C ( 1+ \limsup_{t\to\infty}G_1(t)),
\eeq
where
\beqs
G_1(t) = G_1[\Psi](t) \eqdef f(t)^\frac{2}{2-\beta_2}+\norm{\nabla\Psi_t(t)}^2. 
\eeqs
\end{theorem}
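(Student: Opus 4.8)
My plan is to run a weighted energy estimate on the quantity $\mathcal E(t)\eqdef \int_U H(|\nabla p(x,t)|)\,dx$. By the lemma on $H$ (inequalities \eqref{HtoK} and \eqref{HK2} with $\delta=1$), $\mathcal E(t)$ is comparable to $\int_U|\nabla p|^{2-\beta_2}dx$ up to additive constants, and $\mathcal E(0)\le C\int_U|\nabla p(x,0)|^{2-\beta_2}dx$, so any bound on $\mathcal E$ transfers directly to the stated estimates. The first step is to multiply the first equation of \eqref{IBVP2} by $\bar p_t=p_t-\Psi_t$ (which vanishes on $\partial U$) and integrate by parts. Using $H'(\xi)=2\xi K(\xi)$, one finds $\tfrac{d}{dt}H(|\nabla p|)=2K(|\nabla p|)\nabla p\cdot\nabla p_t$, so that $\int_U K(|\nabla p|)\nabla p\cdot\nabla p_t\,dx=\tfrac12\mathcal E'(t)$, and the identity becomes $\tfrac12\mathcal E'=-\|p_t\|^2+\int_U p_t\Psi_t\,dx+\int_U K(|\nabla p|)\nabla p\cdot\nabla\Psi_t\,dx$. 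I would bound the two forcing integrals by Young's inequality and by \eqref{mc2} with $m=1$ (giving $K(|\nabla p|)|\nabla p|\le d_3|\nabla p|^{1-\beta_2}$) followed by \eqref{bi2}, producing terms $C\|\Psi_t\|^2$, $C\|\nabla\Psi_t\|^2$, a constant, a fraction of $\|p_t\|^2$, and a \emph{small} multiple $\varepsilon\int_U|\nabla p|^{2-\beta_2}dx\le \varepsilon C(\mathcal E+1)$.

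The main obstacle is that the dissipation $-\|p_t\|^2$ is not directly coercive for $\mathcal E$: a naive estimate only yields a \emph{sublinear} bound $\|p_t\|^2\gtrsim \mathcal E^{\theta}$ with $\theta=\tfrac{2(1-\beta_2)}{2-\beta_2}<1$, which cannot produce the exponential factors in \eqref{Hest2}. The device that resolves this is to extract a \emph{linear} lower bound for $\|p_t\|^2$ at the cost of a lower-order term. To this end I would multiply \eqref{IBVP2} by $\bar p$ instead, as in the proof of Theorem~\ref{Lem31}, to get $\int_U K(|\nabla p|)|\nabla p|^2dx=-\int_U p_t\bar p\,dx+\int_U K(|\nabla p|)\nabla p\cdot\nabla\Psi\,dx$. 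Absorbing the $\nabla\Psi$-integral (using $K\le d_4$ from \eqref{Kbdd}) and invoking $\mathcal E\le 2\int_U K(|\nabla p|)|\nabla p|^2dx$ from \eqref{HtoK} gives the relation $c_0\mathcal E\le \|p_t\|\,\|\bar p\|+C\|\nabla\Psi\|^2$. The crucial move is then Young's inequality \emph{with a free parameter} $\lambda$: writing $\|p_t\|\,\|\bar p\|\le \lambda\|p_t\|^2+\tfrac1{4\lambda}\|\bar p\|^2$ yields the linear coercivity $\|p_t\|^2\ge \tfrac{c_0}{\lambda}\mathcal E-\tfrac1{4\lambda^2}\|\bar p\|^2-\tfrac{C}{\lambda}\|\nabla\Psi\|^2$. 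Substituting this into the energy identity converts $-\|p_t\|^2$ into genuine linear dissipation $-c\,\mathcal E$, the price being a harmless $+C\|\bar p\|^2$ term that is controlled independently by the $L^2$-bound of Theorem~\ref{Lem31}.

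Combining the two steps and choosing $\varepsilon$ small and $\lambda$ so that the net coefficient of $\mathcal E$ equals $\tfrac12$, I obtain a \emph{linear} differential inequality $\mathcal E'(t)+\tfrac12\mathcal E(t)\le F(t)$, where $F(t)\le C\big(1+\|\bar p(0)\|^2+[1+Env\,f(t)]^{2/(2-\beta_2)}+\|\nabla\Psi_t(t)\|^2\big)$ after bounding $\|\bar p(t)\|^2$, $\|\nabla\Psi(t)\|^2$ and $\|\Psi_t(t)\|^2$ via \eqref{west}, \eqref{bi2} and \eqref{bi0} in terms of $f$ and $Env\,f$. Solving this scalar ODE gives $\mathcal E(t)\le e^{-t/2}\mathcal E(0)+\int_0^t e^{-\frac12(t-\tau)}F(\tau)\,d\tau$. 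Since $Env\,f$ is increasing, its contribution can be pulled out of the convolution, the $\|\bar p(0)\|^2$ and constant contributions integrate to bounded multiples of themselves, and the $\|\nabla\Psi_t\|^2$ contribution is kept as the convolution term; together with $\mathcal E(0)\le C\int_U|\nabla p(x,0)|^{2-\beta_2}dx$ and $\int_U|\nabla p(t)|^{2-\beta_2}dx\le C(\mathcal E(t)+1)$ this reproduces exactly \eqref{Hest2}.

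Finally, for the asymptotic bound \eqref{limsupG} I would not pass through the envelope. Instead I take $\limsup$ directly in the same differential inequality, now bounding $\limsup_{t\to\infty}\|\bar p(t)\|^2$ by \eqref{nodelta} rather than by $Env\,f$; this makes $\limsup F(t)\le C(1+\limsup_{t\to\infty}G_1(t))$ after using that $x\mapsto x^{2/(2-\beta_2)}$ is increasing to identify $(1+\limsup f)^{2/(2-\beta_2)}$ with $\limsup f^{2/(2-\beta_2)}$ up to constants. Taking $\limsup$ of $\mathcal E(t)\le e^{-t/2}\mathcal E(0)+\int_0^t e^{-\frac12(t-\tau)}F(\tau)\,d\tau$, the first term vanishes and the convolution obeys $\limsup_{t\to\infty}\int_0^t e^{-\frac12(t-\tau)}F(\tau)\,d\tau\le 2\limsup_{t\to\infty}F(t)$, which yields \eqref{limsupG}. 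Throughout, the formal differentiations of $\mathcal E$ and the integrations by parts are justified for sufficiently regular (or Galerkin-approximate) solutions in the standard way; for $K=\bar K$ the function $H$ is still $C^1$ since $K$ is continuous, so the identity $\tfrac{d}{dt}H(|\nabla p|)=2K(|\nabla p|)\nabla p\cdot\nabla p_t$ remains valid.
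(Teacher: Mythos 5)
Your proof is correct, and its skeleton is the same as the paper's: the auxiliary function $H$ of \eqref{Hdef}, the comparison $\int_U H(|\nabla p|)\,dx\le 2\int_U K(|\nabla p|)|\nabla p|^2\,dx$ from \eqref{HtoK}, the two multipliers $\bar p_t$ and $\bar p$, the $L^2$-estimate \eqref{west} to control the resulting $\|\bar p(t)\|^2$ term, a linear Gronwall inequality with rate $\tfrac12$, and \eqref{nodelta} for the asymptotic statement. The one genuine structural difference is how the $\bar p$-multiplier information enters. The paper adds the identity \eqref{weq} to \eqref{wteq} and works with the augmented functional $\mathcal E(t)=\|\bar p(t)\|^2+\int_U H(|\nabla p|)\,dx$, so that the dissipation term $-\int_U K(|\nabla p|)|\nabla p|^2\,dx\le -\tfrac12\int_U H(|\nabla p|)\,dx$ appears explicitly in the summed identity. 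You instead keep the energy pure, $\mathcal E(t)=\int_U H(|\nabla p|)\,dx$, and convert the non-coercive dissipation $-\|p_t\|^2$ into linear dissipation by substituting the lower bound $\|p_t\|^2\ge \tfrac{c_0}{\lambda}\mathcal E-\tfrac{1}{4\lambda^2}\|\bar p\|^2-\tfrac{C}{\lambda}\|\nabla\Psi\|^2$, which you extract from the same $\bar p$-tested identity via Young's inequality with a free parameter $\lambda$. The two devices are mathematically equivalent---both ultimately rest on $\int_U K(|\nabla p|)|\nabla p|^2\,dx\ge\tfrac12\int_U H(|\nabla p|)\,dx$ plus separate control of $\|\bar p\|^2$---but the paper's summation is shorter, while your version makes explicit \emph{why} the $\bar p_t$-identity alone cannot close (only sublinear coercivity $\|p_t\|^2\gtrsim\mathcal E^{\theta}$, $\theta<1$) and keeps the Lyapunov functional uncontaminated by the $L^2$ term. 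Your final step, replacing the paper's appeal to \eqref{ulode} in Lemma \ref{ODE2} by the direct convolution bound $\limsup_{t\to\infty}\int_0^t e^{-\frac12(t-\tau)}F(\tau)\,d\tau\le 2\limsup_{t\to\infty}F(t)$, is a valid, self-contained substitute, and your closing remark on the $C^1$ regularity of $H$ for $K=\bar K$ correctly addresses the only point where the chain rule could be questioned.
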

\begin{proof}
Multiplying the first equation in the \eqref{IBVP2} by $\bar p_t$, integrating over the domain $U$, using integration by parts for the first integral on the RHS and by the fact that $\bar{p}_t=p_t-\Psi_t$;  we have
\begin{align*}
\int_U \bar p_t^2 dx 
&= -\int_UK(|\nabla p|) \nabla p \cdot \nabla \bar p_tdx-\int_U \bar p_t\Psi_t\\
&= -\int_UK(|\nabla p|) \nabla p \cdot \nabla p_tdx +\int_UK(|\nabla p|) \nabla p \cdot \nabla\Psi_tdx-\int_U \bar p_t\Psi_t dx.
\end{align*}

For the first integral on the RHS, using definition \eqref{Hdef} of $H(\xi)$ we have
\beq\label{wteq}
\|\bar p_t\|^2  +\frac 12\frac d{dt}\int_U {\mathcal H}(x,t)dx =\int_UK(|\nabla p|) \nabla p \cdot \nabla\Psi_tdx-\int_U \bar p_t\Psi_t dx,
\eeq
where, for the sake of simplicity, we denoted
$${\mathcal H}(x,t)=H(|\nabla p(x,t)|).$$

 Let 
 \beq\label{Edef}
\mathcal E(t) = \int_U |\bar p(x,t)|^2 dx +\int_U {\mathcal H}(x,t) dx.
\eeq

Summing \eqref{weq} and \eqref{wteq} gives    
\beq\label{gradeq}
\begin{aligned} 
\|\bar p_t\|^2 +\frac 12\frac d{dt}\mathcal E(t) &= -\int_UK(|\nabla p|)|\nabla p|^2dx +\int_U K(|\nabla p|)\nabla p \cdot \nabla(\Psi +\Psi_t) dx\\
&\quad  -\int_U\Psi_t (\bar p +\bar p_t) dx=I_1+I_2+I_3. 
\end{aligned}
\eeq

It follows from \eqref{HtoK} that 
\beq\label{I1est}
I_1\le - \frac 1 2\int_U {\mathcal H}(x,t) dx.
\eeq

Let $\varep>0$. 
Applying Cauchy's inequality for $I_2$, and using the fact \eqref{Kbdd} that $K(\cdot)$ bounded   
\begin{align*}
|I_2|\le \varep \int_UK(|\nabla p|)|\nabla p|^2dx+ C\varep^{-1} (\norm{\nabla\Psi}^2+\norm{\nabla\Psi_t}^2).
\end{align*}


Again using \eqref{HtoK},       
\beq\label{I2est}
|I_2|\le \varep \int_U(C_1 {\mathcal H}(x,t)+ C_2)dx+ C\varep^{-1} (\norm{\nabla\Psi}^2+ \norm{\nabla\Psi_t}^2),
\eeq
where $C_1=d_3/d_2$ and $C_2=d_3K_*(\xi_c)\xi_c^2$. 

For $I_3$, applying Cauchy's inequality gives
\beq\label{I3est}
|I_3|\le \frac 12 (\|\bar p\|^2 + \|\bar p_t\|^2 )+\norm{ \Psi_t}^2.
\eeq

Combining \eqref{gradeq}--\eqref{I3est}, we obtain  
\beq\label{Hw}
\frac d{dt}\mathcal E(t) +  \|\bar p_t\|^2 \le -(1-2\varep C_1) \int_U{\mathcal H}(x,t)dx+\|\bar p\|^2 +2\varep C_2|U| + C\varep^{-1}(\norm{\nabla\Psi}^2+\norm{\nabla\Psi_t}^2)+ 2\norm{\Psi_t}^2. 
\eeq
Selecting $\varep=\delta/(4C_1)$ in \eqref{Hw} with $\delta\in(0,1]$  and using \eqref{Edef}, we find that 
\begin{align}
 \frac d{dt}\mathcal E(t) +  \|\bar p_t\|^2 
&\le -\frac 12 \int _U {\mathcal H}(x,t)dx  + \|\bar p\|^2+C\delta + C\delta^{-1}(\norm{\nabla\Psi}^2+ \norm{\nabla \Psi_t}^2) + 2\norm{\Psi_t}^2 \nonumber \\
&\le -\frac 12 \mathcal E(t) + \frac 32 \|\bar p\|^2 +C\delta + C\delta^{-1}(\norm{\nabla\Psi}^2+ \norm{\nabla \Psi_t}^2) +2 \norm{\Psi_t}^2.\label{Hw2}
\end{align}

Letting $\delta=1$, 
\beq\label{Hsim}
 \frac d{dt}\mathcal E(t)  +  \|\bar p_t\|^2\\
\le -\frac 12 \mathcal E(t) + \frac 32 \|\bar p\|^2+C(1+\norm{\nabla\Psi}^2+ \norm{\nabla \Psi_t}^2+ \norm{\Psi_t}^2).
\eeq

Thanks to estimate \eqref{west} of $ \|\bar p(t)\|^2$ and by using \eqref{bi2} to bound 
\beq\label{pst1}
 \norm{\Psi_t}^2\le 1+ \norm{\Psi_t}^\frac{2-\beta_2}{1-\beta_2},\eeq
 we obtain  
 \beqs
\frac d{dt}\mathcal E(t) \le -\frac 12 \mathcal E(t)+ \frac 32 \|\bar p(0)\|^2+C+C(\big[Env f(t) \big]^\frac{2}{2-\beta_2}+\norm{\nabla\Psi_t(t)}^2).
 \eeqs

 It follows from Gronwall's inequality that
  \beqs
\mathcal E(t) \le e^{-\frac t 2}\mathcal E(0)+ 3 \|\bar p(0)\|^2 +C+C\int_0^t e^{-\frac 12(t-\tau)} (\big[Env f(\tau) \big]^\frac{2}{2-\beta_2}+\norm{\nabla\Psi_t(\tau)}^2)d\tau,
 \eeqs 
which, by monotonicity of  function $Env f(t)$ and definition of $\mathcal{E}(t)$, leads to
\begin{multline*}
 \int_U {\mathcal H}(x,t)dx \le  4\|\bar p(0)\|^2 +e^{-\frac t 2}\int_U {\mathcal H}(x,0) dx \\+C+C[Env f(t) \big]^\frac{2}{2-\beta_2}+C\int_0^t e^{-\frac 12(t-\tau)} \norm{\nabla\Psi_t(\tau)}^2d\tau.
\end{multline*}

We bound ${\mathcal H}(x,t)$ from below by the first inequality in \eqref{HK2} with $\delta=1$, and bound $\mathcal H(x,0)$ from above by  the second inequality of \eqref{HK2}. It results in
\begin{multline*}
C_3 \int_U |\nabla p(x,t)|^{2-\beta_2}dx - C_4\le  4\|\bar p(0)\|^2 +2d_3e^{-\frac t 2}\int_U |\nabla p(x,0)|^{2-\beta_2} dx \\
 +C+C[Env f(t) \big]^\frac{2}{2-\beta_2}+C\int_0^t e^{-\frac 12(t-\tau)} \norm{\nabla\Psi_t(\tau)}^2d\tau
\end{multline*}
for constants $C_3=d_2/2^{\beta_1+\beta_2}$ and $C_4=C_3|U|$, and  estimate \eqref{Hest2} follows.

Neglecting $ \|\bar p_t\|^2$ on the LHS from \eqref{Hsim}, we have 
\beq\label{Hsim4}
 \frac d{dt}\mathcal E(t)\\
\le -\frac 12 \mathcal E(t) + \frac 32 \|\bar p\|^2+C(1+\norm{\nabla\Psi}^2+ \norm{\nabla \Psi_t}^2+ \norm{\Psi_t}^2).
\eeq
Applying \eqref{ulode} in Lemma \ref{ODE2} to  \eqref{Hsim4}, we have
\beqs
\limsup_{t\to\infty} \mathcal E(t)\le  3\limsup_{t\to\infty}\|\bar p\|^2 +C\limsup_{t\to\infty}(1+\norm{\nabla\Psi}^2+\norm{\nabla\Psi_t}^2+\norm{\Psi_t}^2).
\eeqs
Combining this with \eqref{nodelta} yields 
\beq\label{limH0}
\limsup_{t\to\infty} \int_U {\mathcal H}(x,t)dx \le C ( 1+ \limsup_{t\to\infty}G_1(t)).
\eeq

Again, by using the first inequality in \eqref{HK2} with $\delta=1$ to bound ${\mathcal H}(x,t)$ from below in terms of $|\nabla p(x,t)|^{2-\beta_2}$, we obtain  estimate \eqref{limsupG} from \eqref{limH0}.
 \end{proof}


%

Below is a counterpart of Theorem \ref{pasmall}, but for the gradient instead.

\begin{theorem}\label{gradasmall}
For any $\varep>0$, there is $\delta_0>0$ such that if 
\beq\label{small2}
\limsup_{t\to\infty} (\|\nabla \Psi(t)\|+ \| \Psi_t( t)\|+ \|\nabla \Psi_t( t)\|)\le \delta_0 
\eeq
 then 
\beq\label{gradplim}
  \limsup_{t\to\infty}\int_U |\nabla p(x,t)|^{2-\beta_2} dx \le   \varep .
\eeq

Consequently, if 
\beq\label{small3}
\lim_{t\to\infty} \|\nabla \Psi(t)\|=\lim_{t\to\infty} \| \Psi_t( t)\|=\lim_{t\to\infty} \|\nabla \Psi_t(t)\|=0 
\eeq
then 
\beq\label{wlim2}
  \lim_{t\to\infty}\int_U |\nabla p(x,t)|^{2-\beta_2} dx =0.
\eeq
\end{theorem}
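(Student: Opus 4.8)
The plan is to follow the scheme of Theorem~\ref{pasmall}, but with the energy $\mathcal E(t)$ from \eqref{Edef} in place of $\norm{\bar p(t)}^2$, and then to convert the resulting control on $\int_U\mathcal H(x,t)\,dx$ into control on $\int_U|\nabla p|^{2-\beta_2}\,dx$. The starting point is inequality \eqref{Hw2}, which holds for every $\delta\in(0,1]$; dropping the nonnegative term $\norm{\bar p_t}^2$ leaves
\[
\frac{d}{dt}\mathcal E(t)\le -\tfrac12\mathcal E(t)+\tfrac32\norm{\bar p}^2+C\delta+C\delta^{-1}\big(\norm{\nabla\Psi}^2+\norm{\nabla\Psi_t}^2\big)+2\norm{\Psi_t}^2.
\]
Applying \eqref{ulode} of Lemma~\ref{ODE2} with $\theta=1$ and $h\equiv\tfrac12$ (so $\int_0^\infty h\,dt=\infty$), and using $\int_U\mathcal H\,dx\le\mathcal E$ together with the subadditivity of $\limsup$, I would obtain
\[
\limsup_{t\to\infty}\int_U\mathcal H(x,t)\,dx\le 3\limsup_{t\to\infty}\norm{\bar p}^2+2C\delta+2C\delta^{-1}\limsup_{t\to\infty}\big(\norm{\nabla\Psi}^2+\norm{\nabla\Psi_t}^2\big)+4\limsup_{t\to\infty}\norm{\Psi_t}^2.
\]

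The term $\limsup\norm{\bar p}^2$ is not absorbed by $-\tfrac12\mathcal E$ (the forcing coefficient $\tfrac32$ has the wrong sign), so I would control it externally: condition \eqref{small2} implies the hypotheses \eqref{small0}, and Theorem~\ref{pasmall} then makes $\limsup\norm{\bar p(t)}$ as small as desired by shrinking $\delta_0$. To pass from $\mathcal H$ to the gradient I would integrate the first inequality of \eqref{HK2} over $U$, which gives, for any $\delta'>0$,
\[
\int_U|\nabla p|^{2-\beta_2}\,dx\le \frac{1}{d_2}\Big(\frac{1+\delta'}{\delta'}\Big)^{\beta_1+\beta_2}\int_U\mathcal H(x,t)\,dx+(\delta')^{2-\beta_2}|U|.
\]
It is essential that $\delta'$ be a \emph{second}, independent parameter: the coefficient of $\int_U\mathcal H$ blows up like $(\delta')^{-(\beta_1+\beta_2)}$ while the additive offset $(\delta')^{2-\beta_2}|U|$ vanishes, and when $\beta_1+\beta_2\ge 1$ these two effects cannot be balanced by a single parameter.

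The final step is a careful ordering of the smallness choices. Given $\varep>0$, I would first fix $\delta'$ so that $(\delta')^{2-\beta_2}|U|\le\varep/2$; this freezes the constant $C_{\delta'}=d_2^{-1}\big((1+\delta')/\delta'\big)^{\beta_1+\beta_2}$. Next I would fix $\delta\in(0,1]$ so that $C_{\delta'}\cdot 2C\delta\le\varep/8$, which freezes $\delta^{-1}$. Finally, under \eqref{small2} one has $\limsup(\norm{\nabla\Psi}^2+\norm{\nabla\Psi_t}^2)\le 2\delta_0^2$ and $\limsup\norm{\Psi_t}^2\le\delta_0^2$, so choosing $\delta_0$ small enough (and, via Theorem~\ref{pasmall}, small enough to force $3\limsup\norm{\bar p}^2\le\varep/(8C_{\delta'})$) makes each of the remaining contributions $C_{\delta'}\cdot3\limsup\norm{\bar p}^2$, $C_{\delta'}\cdot4C\delta^{-1}\delta_0^2$ and $C_{\delta'}\cdot4\delta_0^2$ at most $\varep/8$. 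Summing the four bounded terms yields $C_{\delta'}\limsup\int_U\mathcal H\le\varep/2$, and adding the offset gives $\limsup_{t\to\infty}\int_U|\nabla p|^{2-\beta_2}\,dx\le\varep$, which is \eqref{gradplim}. The consequence \eqref{wlim2} is then immediate: under \eqref{small3} condition \eqref{small2} holds for every $\delta_0>0$, hence \eqref{gradplim} holds for every $\varep>0$, and the nonnegative integral must tend to $0$.

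The main obstacle is precisely this two-parameter bookkeeping forced by the degeneracy of $K$ at $\xi=0$: the lower bound \eqref{HK2} necessarily carries both a small coefficient and an additive offset, so one must verify that the nested choices $\delta'$, then $\delta$, then $\delta_0$ genuinely decouple these competing effects. Everything else is a routine adaptation of the argument for Theorem~\ref{pasmall}.
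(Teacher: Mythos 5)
Your proposal is correct and follows essentially the same route as the paper: both start from \eqref{Hw2}, apply \eqref{ulode} of Lemma \ref{ODE2} to bound $\limsup_{t\to\infty}\int_U {\mathcal H}(x,t)\,dx$, convert ${\mathcal H}$ into $|\nabla p|^{2-\beta_2}$ via the two-parameter lower bound \eqref{HK2} (your $\delta'$ is the paper's $\delta_1$), and nest the smallness choices in the same order ($\delta'$, then $\delta$, then $\delta_0$). The only cosmetic difference is that you invoke Theorem \ref{pasmall} as a black box to make $\limsup_{t\to\infty}\|\bar p(t)\|^2$ small, whereas the paper re-uses the explicit intermediate estimate \eqref{f-delta} from that theorem's proof; both are valid.
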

\begin{proof}
First, we estimate the limit superior of $ \int_U {\mathcal H}(x,t)dx$ as $t\to\infty$.
Let $\delta\in (0,1]$. Applying \eqref{ulode} in Lemma \ref{ODE2} to  \eqref{Hw2}, we have
\beq\label{Hw23}
\limsup_{t\to\infty} \mathcal E(t)\le  C\limsup_{t\to\infty}\|\bar p\|^2 +C\Big\{\delta^{-1}\limsup_{t\to\infty}(\norm{\nabla\Psi}^2+\norm{\nabla\Psi_t}^2)+  \limsup_{t\to\infty} \norm{\Psi_t}^2+ \delta\Big\}.
\eeq

It follows from \eqref{Hw23} and \eqref{f-delta} that
\beqs
\begin{split}
&\limsup_{t\to\infty} \int_U {\mathcal H}(x,t)dx \\
& \le C  \Big\{ \delta^2 + \delta^{-\frac{2(\beta_1+\beta_2)}{2-\beta_2} }\big(\limsup_{t\to\infty}\|\nabla \Psi\|^2+\limsup_{t\to\infty}\norm{\nabla\Psi}^{\frac{4}{2-\beta_2}}\big)+  \delta^{-\frac{2(\beta_1+\beta_2)}{1-\beta_2} } \limsup_{t\to\infty}\norm{\Psi_t}^{\frac{2}{1-\beta_2}}    \Big\}\\
&\quad +C\Big\{ \delta^{-1}\limsup_{t\to\infty}(\norm{\nabla\Psi}^2+\norm{\nabla\Psi_t}^2)+  \limsup_{t\to\infty} \norm{\Psi_t}^2+\delta\Big\}.
\end{split}
\eeqs
Thanks to the fact $\delta\le 1$, it follows that 
\beq\label{Hlim}
\begin{split}
\limsup_{t\to\infty} \int_U {\mathcal H}(x,t)dx 
&\le C  \Big\{ \delta   
 + \delta^{-\kappa } \limsup_{t\to\infty}\Big(\|\nabla \Psi(t)\|^2+\norm{\nabla\Psi(t)}^\frac{4}{2-\beta_2}\\
 &\qquad + \norm{\Psi_t(t)}^{\frac{2}{1-\beta_2}}  +   \norm{\Psi_t(t)}^2
 +\norm{\nabla\Psi_t(t)}^2\Big)\Big\},
\end{split}
\eeq 
where $\kappa=\max\Big\{\frac{2(\beta_1+\beta_2)}{1-\beta_2},1\Big\}$.

Let $\delta_1$ be any number in $(0,1]$.
Applying the first inequality of \eqref{HK2} with $\delta=\delta_1$ gives
\beq\label{Hd1} 
{\mathcal H}(x,t)\ge \frac{d_2 \delta_1^{\beta_1+\beta_2}}{2^{\beta_1+\beta_2}}|\nabla  p(x,t)|^{2-\beta_2}-d_2 \delta_1^{2+\beta_1}.
\eeq

Combining this with \eqref{Hlim} yields   
\begin{align}\label{limsupGrad}
\limsup_{t\to\infty} \int_U |\nabla  p(x,t)|^{2-\beta_2}dx 
&\le C\delta_1^{-(\beta_1+\beta_2)} \limsup_{t\to\infty} \int_U {\mathcal H}(x,t)dx +C \delta_1^{2-\beta_2} \notag \\
&\le C \delta_1^{-(\beta_1+\beta_2)} \Big\{ \delta+ \delta^{-\kappa} \limsup_{t\to\infty}\Big( \|\nabla \Psi\|^2+\norm{\nabla\Psi}^\frac{4}{2-\beta_2} \notag \\
&\quad +\norm{\Psi_t}^{\frac{2}{1-\beta_2}}   +  \norm{\Psi_t}^2  
 +\norm{\nabla\Psi_t}^2\Big)\Big\} +C \delta_1^{2-\beta_2}.
\end{align}

Assume \eqref{small2} with $\delta_0\in(0,1]$, then \eqref{limsupGrad} yields
\begin{align}
\limsup_{t\to\infty} \int_U |\nabla  p|^{2-\beta_2}dx
& \le C_5 \delta_1^{-(\beta_1+\beta_2)}  \Big\{ \delta+ \delta^{-\kappa} (\delta_0^2+\delta_0^\frac{4}{2-\beta_2}
+ \delta_0^{\frac{2}{1-\beta_2}} )\Big\}   + C_5\delta_1^{2-\beta_2} \nonumber\\
& \le C_5 \delta_1^{-(\beta_1+\beta_2)}  (\delta+ 3\delta^{-\kappa} \delta_0^2)  + C_5\delta_1^{2-\beta_2} \label{gpl}
\end{align}
for some $C_5>0$ is independent of $\delta$, $\delta_0$, and $\delta_1$. 

First we choose $\delta_1$ sufficiently small satisfying $ C_5\delta_1^{2-\beta_2}\le \varep/3$. With this $\delta_1$,  choose $\delta\in (0,1]$ such that 
$
C_5\delta_1^{-(\beta_1+\beta_2)} \delta\le \varep/ 3.
$
Next, we choose $\delta_0>0$ much smaller than $\delta_1, \delta$ that satisfies 
$$
C_5 \delta_1^{-(\beta_1+\beta_2)}   \delta^{-\kappa}  \delta_0^2  \le \varep/9.
$$
Then \eqref{gradplim} follows \eqref{gpl}. 
Finally, under condition \eqref{small3}, the estimate \eqref{gradplim} holds for all $\varep>0$, which proves \eqref{wlim2}.
\end{proof}

When time $t$ is large, we improve the estimates in Theorem \ref{grad-est} by deriving uniform Gronwall-type inequalities.


\begin{theorem}\label{theo39}  If $t\ge 1$ then
\beq\label{wteq8}
\int_{t-\frac 12}^t\|\bar p_t(\tau)\|^2 d\tau+ \int_U |\nabla p(x,t)|^{2-\beta_2}dx 
\le C \Big(1+\| \bar p(t-1)\|^2
+\int_{t-1}^{t} (f(\tau)+\norm{\nabla\Psi_t(\tau)}^2)d\tau\Big),
\eeq
and, consequently,
\beq\label{wteq9}
\int_U |\nabla p(x,t)|^{2-\beta_2} dx \le C\Big(1+\|\bar p(0)\|^2+ (Env f(t))^\frac{2}{2-\beta_2}+\int_{t-1}^{t}\norm{\nabla\Psi_t(\tau)}^2 d\tau\Big).
\eeq
\end{theorem}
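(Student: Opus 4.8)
The aim is to remove the dependence on the initial gradient $\int_U|\nabla p(x,0)|^{2-\beta_2}dx$ that appears in Theorem~\ref{grad-est}, by replacing the global Gronwall argument there with a \emph{uniform} (sliding-window) Gronwall argument on $[t-1,t]$. Two differential inequalities are at our disposal: the energy identity \eqref{weq} for $\|\bar p\|^2$, and the inequality \eqref{Hsim} for the combined functional $\mathcal E(t)$ of \eqref{Edef}. The crucial point is that the endpoint contribution of the \emph{first} inequality is only the $L^2$-norm $\|\bar p(t-1)\|^2$, whereas integrating the $\mathcal E$-inequality would reintroduce $\int_U\mathcal H(x,t-1)$, i.e. the gradient at time $t-1$. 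Thus the plan is to first bound the time integral $\int_{t-1}^t\!\int_U\mathcal H\,dx\,d\tau$ using only \eqref{weq}, and then feed this into a uniform Gronwall applied to \eqref{Hsim}.

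\textbf{Step 1 (time-averaged bound on $\int_U\mathcal H$).} Starting from \eqref{weq}, I would estimate the $\nabla\Psi$- and $\Psi_t$-terms by Cauchy's and Young's inequalities and use $K(|\nabla p|)|\nabla p|^2\ge\tfrac12 H(|\nabla p|)$ (the second inequality of \eqref{HtoK}, already recorded in \eqref{I1est}) to obtain
\begin{equation*}
\frac{d}{dt}\|\bar p\|^2 + \tfrac12\int_U\mathcal H(x,t)\,dx \le \|\bar p\|^2 + C\big(\|\nabla\Psi\|^2+\|\Psi_t\|^2\big).
\end{equation*}
Integrating over $[t-1,t]$ leaves $\|\bar p(t-1)\|^2$ as the only endpoint term. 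To control the resulting $\int_{t-1}^t\|\bar p\|^2\,d\tau$ locally, I would discard the nonpositive dissipation in \eqref{pd3} to get $\frac{d}{dt}\|\bar p\|^2\le C(1+f)$, which upon integration gives $\sup_{[t-1,t]}\|\bar p\|^2\le\|\bar p(t-1)\|^2+C\int_{t-1}^t(1+f)\,d\tau$. Absorbing $\|\Psi_t\|^2$ and $\|\nabla\Psi\|^2$ into $1+f$ via \eqref{bi2} then yields
\begin{equation*}
\int_{t-1}^t\!\int_U\mathcal H\,dx\,d\tau + \int_{t-1}^t\|\bar p\|^2\,d\tau \le C\Big(\|\bar p(t-1)\|^2+\int_{t-1}^t\big(1+f(\tau)\big)\,d\tau\Big),
\end{equation*}
hence the same bound for $\int_{t-1}^t\mathcal E(s)\,ds$.

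\textbf{Step 2 (uniform Gronwall).} Rewriting \eqref{Hsim} as $\frac{d}{dt}\mathcal E+\|\bar p_t\|^2+\tfrac12\mathcal E\le\mathcal G$, where $\mathcal G=\tfrac32\|\bar p\|^2+C(1+\|\nabla\Psi\|^2+\|\nabla\Psi_t\|^2+\|\Psi_t\|^2)$, I would integrate the weaker inequality $\frac{d}{dt}\mathcal E+\tfrac12\mathcal E\le\mathcal G$ from $r$ to $s$ for $r\in[t-1,t-\tfrac12]$ and $s\in[t-\tfrac12,t]$; since the exponential factors are bounded by $1$, this gives $\mathcal E(s)\le\mathcal E(r)+\int_{t-1}^t\mathcal G\,d\tau$. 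Averaging in $r$ over $[t-1,t-\tfrac12]$ eliminates the pointwise term $\mathcal E(r)$ in favor of $\int_{t-1}^{t-1/2}\mathcal E\,dr$, which is controlled by Step~1, and produces a bound on $\mathcal E(s)$ that is \emph{uniform} for $s\in[t-\tfrac12,t]$. For the $\bar p_t$-term I would instead integrate the full inequality over $[t-\tfrac12,t]$ and drop the nonnegative $\mathcal E(t)$ and $\tfrac12\int\mathcal E$, so that $\int_{t-1/2}^t\|\bar p_t\|^2\,d\tau\le\mathcal E(t-\tfrac12)+\int_{t-1/2}^t\mathcal G\,d\tau$, the first term again being covered by the uniform bound just obtained. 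Finally, converting $\int_U\mathcal H(x,t)\,dx\le\mathcal E(t)$ to $\int_U|\nabla p|^{2-\beta_2}dx$ through the first inequality in \eqref{HK2} with $\delta=1$, and noting $\int_{t-1}^t\mathcal G\le C\big(\|\bar p(t-1)\|^2+\int_{t-1}^t(1+f+\|\nabla\Psi_t\|^2)\big)$, yields \eqref{wteq8}.

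The consequential estimate \eqref{wteq9} then follows by substituting the global $L^2$-bound \eqref{west} for $\|\bar p(t-1)\|^2$, using that $Env f$ is increasing (so $Env f(t-1)\le Env f(t)$ and $\int_{t-1}^t f\,d\tau\le Env f(t)$) together with \eqref{bi2} and \eqref{bi0} to absorb the lower-order powers into $(Env f(t))^{2/(2-\beta_2)}$. The main obstacle I anticipate is organizational rather than computational: one must be disciplined about which of the two energy inequalities is integrated, keeping the gradient-valued quantity $\int_U\mathcal H$ out of every endpoint term so that the final bound depends on the past only through the $L^2$-norm $\|\bar p(t-1)\|^2$. The sliding-window averaging in Step~2 is precisely what makes this possible and is the technical heart of the argument.
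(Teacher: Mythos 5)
Your proposal is correct and follows essentially the same two-stage uniform-Gronwall (sliding-window) argument as the paper: first a bound on the window integral of the gradient functional over $[t-1,t]$ whose only endpoint term is $\|\bar p(t-1)\|^2$ (the paper integrates \eqref{psi-delta-3} to get \eqref{new1}; you integrate \eqref{weq} directly against $\tfrac12\int_U\mathcal{H}\,dx$), then an average over the starting point of the differential inequality for the gradient functional (the paper's \eqref{new3} for $\int_U\mathcal{H}\,dx$, your \eqref{Hsim} for $\mathcal{E}$), followed by the same conversion via \eqref{HK2} and the same derivation of \eqref{wteq9} from \eqref{west}. The deviations---working with $\mathcal{E}$ rather than $\int_U\mathcal{H}\,dx$ alone, and controlling $\int_{t-1}^t\|\bar p\|^2\,d\tau$ by a supremum bound from \eqref{pd3} instead of absorbing $\|\bar p\|$ into the dissipation via Poincar\'e--Sobolev before integrating---are cosmetic and do not alter the structure or validity of the argument.
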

\begin{proof}
On the right-hand side of \eqref{psi-delta}, we use \eqref{pP} again but this time to bound $\|\bar p\|$ in terms of $\int_U |\nabla \bar p|^{2-\beta_2}dx$. Then, with the same choice of $\varepsilon$,  we have instead of \eqref{psi-delta-2} \beq\label{psi-delta-3}
\frac d{dt}\norm {\bar p}^2 
\le - \frac {d_2 \delta^{\beta_1+\beta_2} } {2^{2+\beta_1}  }   \int_U |\nabla \bar p|^{2-\beta_2}dx + C\Big({\delta^{2+\beta_1}}+\|\nabla \Psi\|^{2-\beta_2}+\norm{\nabla\Psi}^2+\delta^{-\frac{\beta_1+\beta_2}{1-\beta_2}}\norm{\Psi_t}^{\frac{2-\beta_2}{1-\beta_2}}\Big).
\eeq

Integrating \eqref{psi-delta-3} in time from $t-1$ to $t$,  we have 
\begin{multline}\label{new1}
\norm {\bar p(t)}^2+C_1 \delta^{\beta_1+\beta_2}\int_{t-1}^t \int_{U}|\nabla \bar p|^{2-\beta_2}dxd\tau
\le \norm {\bar p(t-1)}^2 + C{\delta^{2+\beta_1}} \\
+C\int_{t-1}^t(\|\nabla \Psi\|^{2-\beta_2}+ \norm{\nabla\Psi}^2+\delta^{-\frac{\beta_1+\beta_2}{1-\beta_2}}\norm{\Psi_t}^{\frac{2-\beta_2}{1-\beta_2}})d\tau,
\end{multline}
where $C_1>0$ is independent of $\delta$.

Let $\varep\in(0,1]$.
Applying Cauchy's inequality to integrals on the RHS of \eqref{wteq}, we have the following: 
\beq\label{wteq1}
\|\bar p_t\|^2  +\frac 12\frac d{dt}\int_U {\mathcal H}(x,t)dx \le\varepsilon\int_UK(|\nabla p|) |\nabla p|^2dx+\frac{C}{\varepsilon}\int_UK(|\nabla p|) |\nabla\Psi_t|^2dx+\frac 12\|\bar p_t\|^2+\frac 12\norm{\Psi_t}^2 .
\eeq
Then using \eqref{Kbdd} for the second integral on the RHS of \eqref{wteq1}, we have
\beq\label{new2a}
\begin{aligned}
\|\bar p_t\|^2  + \frac d{dt}\int_U {\mathcal H}(x,t)dx &\le 2\varepsilon\int_UK(|\nabla p|) |\nabla p|^2dx+C(\varep^{-1}\norm{\nabla\Psi_t}^2+ \norm{\Psi_t}^2).
\end{aligned}
\eeq
By virtue of  \eqref{HtoK}, we find from \eqref{new2a} that
\beq\label{new3}
\begin{aligned}
\|\bar p_t\|^2  + \frac d{dt}\int_U {\mathcal H}(x,t)dx &\le C\varep \int_U{\mathcal H}(x,t)dx+C(\varep +\varep^{-1}\norm{\nabla\Psi_t}^2+ \norm{\Psi_t}^2).
\end{aligned}
\eeq

Integrating \eqref{new3} in time  from $s$ to $t$ where $s\in[t-1,t]$, we have 
\begin{align}
&\int_s^t\|\bar p_t\|^2 d\tau + \int_U {\mathcal H}(x,t)dx \notag \\
&\quad \le \int_U {\mathcal H}(x,s)dx+ C\varep\int_s^t\int_U{\mathcal H}(x,\tau)dxd\tau+ C\int_s^t(\varep +\varep^{-1}\norm{\nabla\Psi_t}^2+ \norm{\Psi_t}^2)d\tau \notag\\
&\quad \le \int_U {\mathcal H}(x,s)dx+C\int_{t-1}^t\int_U{\mathcal H}(x,t)dxd\tau + C\varep+C\int_{t-1}^t(\varep^{-1}\norm{\nabla\Psi_t}^2+ \norm{\Psi_t}^2)d\tau. \label{wteq5}
\end{align}

Integrating \eqref{wteq5} in $s$ from $t-1$ to $t$  shows that  
\beq\label{wteq6}
\int_{t-1}^t\int_s^t\|\bar p_t\|^2 d\tau ds + \int_U {\mathcal H}(x,t)dx 
\le C\Big\{\int_{t-1}^t\int_U{\mathcal H}(x,\tau)dxd\tau+\varep+\int_{t-1}^t(\varep^{-1}\norm{\nabla\Psi_t}^2+ \norm{\Psi_t}^2)d\tau\Big\}.
\eeq

Estimating first term of \eqref{wteq6} by 
\beqs
\int_{t-1}^t\int_s^t\|\bar p_t\|^2 d\tau ds  \ge \int_{t-1}^{t-\frac 12}\int_{t-\frac 12}^t\|\bar p_t\|^2  d\tau ds\ge  \frac 12\int_{t-\frac 12}^t\|\bar p_t\|^2 d\tau,
\eeqs
we get
\beq\label{wteq06}
\begin{aligned}
 \frac 12\int_{t-\frac 12}^t\|\bar p_t\|^2 d\tau + \int_U {\mathcal H}(x,t)dx 
&\le C\Big\{ \int_{t-1}^t\int_U{\mathcal H}(x,\tau)dxd\tau+\varep+\int_{t-1}^t(\varep^{-1}\norm{\nabla\Psi_t}^2+ \norm{\Psi_t}^2)d\tau\Big\}.
\end{aligned}
\eeq

Estimating the double integral on the RHS of  \eqref{wteq06} by combining the second inequality of \eqref{HK2} with \eqref{new1}, we obtain
\begin{multline}\label{new8}
\frac12 \int_{t-\frac 12}^t\|\bar p_t(\tau)\|^2 d\tau+ \int_U {\mathcal H}(x,t)dx\\
 \le C\Big\{\delta^{-(\beta_1+\beta_2)}\norm {\bar p(t-1)}^2 
+ \delta^{2-\beta_2}+\delta^{-(\beta_1+\beta_2)}\int_{t-1}^t (\|\nabla \Psi\|^{2-\beta_2}+ \norm{\nabla\Psi}^2+\delta^{-\frac{\beta_1+\beta_2}{1-\beta_2}}\norm{\Psi_t}^{\frac{2-\beta_2}{1-\beta_2}})d\tau\\
+\varep+\int_{t-1}^t(\varep^{-1}\norm{\nabla\Psi_t}^2+ \norm{\Psi_t}^2) d\tau \Big\}.
\end{multline}

In \eqref{new8}, choosing $\varep=\delta=1$,  using \eqref{psH} and \eqref{pst1} give 
\begin{align*}
\int_{t-1/2}^t \|\bar p_t\|^2 d\tau + \int_U {\mathcal H}(x,t)dx 
&\le C\Big(1+ \|\bar p(t-1)\|^2+\int_{t-1}^{t}(\norm{\nabla\Psi}^2+\norm{\nabla\Psi_t}^2+ \norm{\Psi_t}^\frac{2-\beta_2}{1-\beta_2} ) d\tau\Big) \notag \\
&= C \Big(1+\| \bar p(t-1)\|^2
+\int_{t-1}^{t} (f(\tau)+\norm{\nabla\Psi_t(\tau)}^2)d\tau\Big).
\end{align*}

Combining this with the first inequality in \eqref{HK2} with $\delta=1$ yields \eqref{wteq8}.

Combining \eqref{wteq8} with \eqref{west}, we obtain \eqref{wteq9}.
\end{proof}

As for $t\to\infty$, we have the following alternative results.

\begin{corollary}\label{cor310}
Ones have
\beq\label{wteq11}
\limsup_{t\to\infty} \int_U |\nabla p(x,t)|^{2-\beta_2}  dx \le C\Big(1+ \limsup_{t\to\infty} f(t)^\frac{2}{2-\beta_2}+\limsup_{t\to\infty}\int_{t-1}^{t}\norm{\nabla\Psi_t(\tau)}^2d\tau\Big).
\eeq
Moreover, if 
\beq\label{small30}
\lim_{t\to\infty} \|\nabla \Psi(t)\|=\lim_{t\to\infty} \| \Psi_t( t)\|= \lim_{t\to\infty} \int_{t-1}^t \|\nabla \Psi_t(\tau)\|^2 d\tau=0 
\eeq
then 
\beq\label{wlim20}
  \lim_{t\to\infty}\int_U |\nabla p(x,t)|^{2-\beta_2} dx =0.
\eeq
\end{corollary}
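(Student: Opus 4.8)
The plan is to derive \eqref{wteq11} by passing to the limit superior in the uniform-Gronwall bound \eqref{wteq8} of Theorem \ref{theo39}, and then to obtain the limit \eqref{wlim20} by returning to the intermediate estimate \eqref{new8} with its parameters $\delta,\varepsilon$ kept arbitrary. The two assertions genuinely use the estimate of Theorem \ref{theo39} at different parameter choices, so I would treat them separately.

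For \eqref{wteq11}, I would start from \eqref{wteq8} and take $\limsup_{t\to\infty}$ term by term. The quantity $\|\bar p(t-1)\|^2$ has the same limit superior as $\|\bar p(t)\|^2$, which by \eqref{nodelta} is bounded by $C(1+\limsup_{t\to\infty}f(t))^{2/(2-\beta_2)}$; since $2/(2-\beta_2)>1$, inequality \eqref{bi0} with $k=2$ together with the continuity and monotonicity of $x\mapsto x^{2/(2-\beta_2)}$ (which give $(\limsup f)^{2/(2-\beta_2)}=\limsup f^{2/(2-\beta_2)}$) turns this into $C(1+\limsup_{t\to\infty}f(t)^{2/(2-\beta_2)})$. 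For the sliding average $\int_{t-1}^t f(\tau)\,d\tau$, I would first use \eqref{bi2} with $\beta=1\le\gamma=2/(2-\beta_2)$ to write $f\le 1+f^{2/(2-\beta_2)}$, and then use the elementary fact that for a nonnegative continuous $g$ one has $\limsup_{t\to\infty}\int_{t-1}^t g(\tau)\,d\tau\le\limsup_{t\to\infty}\sup_{[t-1,t]}g=\limsup_{t\to\infty}g(t)$; applied to $g=f^{2/(2-\beta_2)}$ this bounds the $f$-average by $1+\limsup_{t\to\infty}f(t)^{2/(2-\beta_2)}$. Collecting the terms yields \eqref{wteq11}.

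The key point for the limit \eqref{wlim20} is that the clean bound \eqref{wteq8} (where $\delta=\varepsilon=1$) carries an additive constant $C$ that does not vanish, so it only gives $\limsup_{t\to\infty}\int_U|\nabla p|^{2-\beta_2}\,dx\le C$; instead I would return to \eqref{new8}, keep $\delta,\varepsilon\in(0,1]$ arbitrary, and discard the nonnegative term $\|\bar p_t\|^2$. Under hypothesis \eqref{small30} every boundary contribution on the right of \eqref{new8} tends to $0$: since $\|\nabla\Psi(t)\|,\|\Psi_t(t)\|\to 0$, the sliding integrals of $\|\nabla\Psi\|^{2-\beta_2}$, $\|\nabla\Psi\|^2$, $\|\Psi_t\|^{(2-\beta_2)/(1-\beta_2)}$ and $\|\Psi_t\|^2$ all tend to $0$, while $\int_{t-1}^t\|\nabla\Psi_t\|^2\,d\tau\to0$ is assumed; moreover \eqref{small30} implies the hypothesis \eqref{limcond} of Theorem \ref{pasmall}, so \eqref{wlim} gives $\|\bar p(t-1)\|\to 0$. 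Because $\delta,\varepsilon$ are fixed, the prefactors $\delta^{-(\beta_1+\beta_2)}$ and $\varepsilon^{-1}$ merely multiply quantities going to $0$, and passing to the limit superior leaves only $\limsup_{t\to\infty}\int_U\mathcal H(x,t)\,dx\le C(\delta^{2-\beta_2}+\varepsilon)$.

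Letting $\delta,\varepsilon\searrow 0$ then gives $\lim_{t\to\infty}\int_U\mathcal H(x,t)\,dx=0$, using $\mathcal H\ge 0$. Finally I would convert back to the gradient via \eqref{Hd1} with a free $\delta_1\in(0,1]$, which yields $\int_U|\nabla p|^{2-\beta_2}\,dx\le C\delta_1^{-(\beta_1+\beta_2)}\int_U\mathcal H\,dx+C\delta_1^{2-\beta_2}$; taking the limit superior and using $\int_U\mathcal H\,dx\to0$ gives $\limsup_{t\to\infty}\int_U|\nabla p|^{2-\beta_2}\,dx\le C\delta_1^{2-\beta_2}$, and letting $\delta_1\searrow0$ proves \eqref{wlim20}. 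I expect the main obstacle to be the bookkeeping of the two different exponents of $f$ (the first power arising in the time averages versus $f^{2/(2-\beta_2)}$ in the target of \eqref{wteq11}), resolved by \eqref{bi2} and the sliding-window limit, together with the structural observation that \eqref{wlim20} truly requires the parameter-dependent estimate \eqref{new8} rather than the specialized bound \eqref{wteq8}.
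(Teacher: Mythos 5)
Your proposal is correct and follows essentially the same route as the paper: \eqref{wteq11} is obtained by taking $\limsup$ in \eqref{wteq8} and invoking \eqref{nodelta} (with the same sliding-window and power-adjustment bookkeeping the paper leaves implicit), and \eqref{wlim20} comes from the parameter-dependent estimate \eqref{new8} together with Theorem \ref{pasmall} and the lower bound \eqref{Hd1}. The only difference is cosmetic: you send $\delta,\varepsilon\to0$ first to get $\int_U\mathcal H\,dx\to0$ and then apply \eqref{Hd1}, whereas the paper applies \eqref{Hd1} first and lets $\varepsilon$, $\delta$, $\delta_1$ tend to zero at the end.
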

\begin{proof}

Combining \eqref{nodelta} and the limit superior of \eqref{wteq8}, we have
\begin{multline*}
\limsup_{t\to\infty} \int_U |\nabla p(x,t)|^{2-\beta_2}dx \\
\le C\Big(1+ \limsup_{t\to\infty} f(t)^\frac{2}{2-\beta_2}+\limsup_{t\to\infty}\int_{t-1}^{t} (\norm{\nabla \Psi}^{2}+\norm{\nabla\Psi_t}^2 +\norm{\Psi_t}^\frac{2-\beta_2}{1-\beta_2} )d\tau\Big),
\end{multline*}
which implies  \eqref{wteq11}.

Let $\delta_1\in (0,1]$. 
On the left-hand side of \eqref{new8}, we neglect the time derivative term and apply \eqref{Hd1} to bound $\mathcal H(x,t)$ from below in terms of $|\nabla p(x,t)|^{2-\beta_2}$.
It yields
\begin{multline}\label{graded}
\delta_1^{\beta_1+\beta_2}\int_U |\nabla p(x,t)|^{2-\beta_2}dx \le C\Big\{  \delta_1^{2+\beta_1} +\delta^{2-\beta_2}  +   \delta^{-(\beta_1+\beta_2)}\norm {\bar p(t-1)}^2 \\
+\delta^{-(\beta_1+\beta_2)}\int_{t-1}^t (\|\nabla \Psi\|^{2-\beta_2}+ \norm{\nabla\Psi}^2+\delta^{-\frac{\beta_1+\beta_2}{1-\beta_2}}\norm{\Psi_t}^{\frac{2-\beta_2}{1-\beta_2}})d\tau\\
+\varep+\int_{t-1}^t(\varep^{-1}\norm{\nabla\Psi_t}^2+ \norm{\Psi_t}^2) d\tau \Big\}.
\end{multline}
 
Under condition \eqref{small30}, we have from Theorem \ref{pasmall} that $\lim_{t\to\infty}\|\bar p(t-1)\|=0$. 
Passing $t\to\infty$ in \eqref{graded} gives
\begin{align*}
 \limsup_{t\to\infty} \int_U |\nabla p(x,t)|^{2-\beta_2}dx 
 &\le C \delta_1^{-(\beta_1+\beta_2)}(  \delta_1^{2+\beta_1} +   \delta^{2-\beta_2}+\varep)\\
 &=C \delta_1^{2-\beta_2} +  C\delta_1^{-(\beta_1+\beta_2)} ( \delta^{2-\beta_2}+\varep).
\end{align*}
Letting $\varep\to 0$, $\delta\to 0$, and then $\delta_1\to 0$, we obtain \eqref{wlim20}.
\end{proof}


\begin{remark} 
(a) Comparing with \eqref{Hest2}, the inequality \eqref{wteq9}  explicitly shows the independence on the initial norm 
$\|\nabla p(0)\|_{L^{2-\beta_2}}$. Also, the term $\int_{t-1}^{t}\norm{\nabla\Psi_t(\tau)}^2 d\tau$ explicitly shows that the dependence on the second derivative $\nabla \Psi_t$ of the boundary data is not accumulative in time on the whole interval $(0,t)$.

(b) Since 
$$\limsup_{t\to\infty} \int_{t-1}^{t}\norm{\nabla\Psi_t(\tau)}^2 d\tau \le \limsup_{t\to\infty} \norm{\nabla\Psi_t(t)}^2,$$
the results \eqref{wteq11}--\eqref{wlim20}  in Corollary \ref{cor310} improve \eqref{limsupG}  in Theorem~\ref{grad-est} and \eqref{small2}--\eqref{wlim2} in Theorem \ref{gradasmall}.
\end{remark}

\section{Continuous dependence on the initial and boundary data}\label{dependsec}

In this section, we establish the continuous dependence of the solution of problem \eqref{IBVP} on the initial and boundary data.
We consider $K(\xi)=\bar K(\xi)$, $K_I(\xi)$, $\hat{K}(\xi)$, $K_M(\xi)$.

Let $p_1(x,t)$ and $p_2(x,t)$ be two solutions of \eqref{IBVP} with boundary data $\psi_1(x,t)$ and $\psi_2(x,t)$, respectively. For $i=1,2$, let $\Psi_i(x,t)$ be an extension of $\psi_i(x,t)$ to $\bar U\times [0,\infty)$, and  $\bar p_i=p_i-\Psi_i$ .
Denote 
$$\Phi=\Psi_1-\Psi_2\quad\text{and}\quad \bar P=\bar p_1-\bar p_2=p_1-p_2-\Phi.$$
 Then 
\begin{align}\label{Wequl}
\frac{\partial \bar P}{\partial t}&= \nabla \cdot (K(|\nabla p_1|)\nabla p_1)-\nabla \cdot (K(|\nabla p_2|)\nabla p_2)-\Phi_t\quad \text{on }U\times(0,\infty),\\
\bar P&=0 \quad \text{on }\Gamma\times(0,\infty).\notag
\end{align}

Let
\beq\label{tilU}
\Lambda(t)=1+\|\nabla p_1( t)\|_{L^{2-\beta_2}}+ \|\nabla p_2(t)\|_{L^{2-\beta_2}}.
\eeq

For the difference between two boundary data, we define
\beqs
D(t)=\norm{\Phi_t(t)}+\norm{\nabla \Phi(t)}_{L^{2-\beta_2}}+\norm{\nabla \Phi(t)}_{L^{2+\beta_1}}^{2+\beta_1}.
\eeqs

First, we obtain the estimates for $\|\bar P(t)\|$ in terms of $D(t)$   and individual solutions $p_1(x,t)$, $p_2(x,t)$.

\begin{proposition}  \label{prop61}
If $t\ge 0$ then
\beq\label{one3}
\| \bar P(t)\|^2 \le \| \bar P(0)\|^2+C\Big\{ Env \Big[ \Lambda(t)^{\beta_1+\beta_2} (\Lambda(t)^{1-\beta_2}+\| \bar p_1 (t) \|+\| \bar p_2 (t) \|) D(t) \Big] \Big\}^\frac{2}{2+\beta_1}.
\eeq

If $\displaystyle \int_0^\infty \Lambda(t)^{-(\beta_1+\beta_2)}dt =\infty$, then  
\beq\label{lmsp1}
\begin{aligned}
 \limsup_{t\to\infty}\|\bar P(t)\|^2 
 &\le C \limsup_{t\to\infty} \Big\{  \Lambda(t)^{\beta_1+\beta_2} (\Lambda(t)^{1-\beta_2}+\|\bar p_1(t)\| + \|\bar p_2(t)\| )
D(t)\Big\}^\frac{2}{2+\beta_1}.
\end{aligned}
\eeq
\end{proposition}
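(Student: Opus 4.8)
The plan is to multiply the evolution equation \eqref{Wequl} by $\bar P$, integrate over $U$, and reduce everything to a single scalar differential inequality for $y(t)=\|\bar P(t)\|^2$ of the exact shape handled by Lemma~\ref{ODE2}; the two conclusions \eqref{one3} and \eqref{lmsp1} are then the two outputs \eqref{ubode} and \eqref{ulode} of that lemma. First I would write $P=p_1-p_2=\bar P+\Phi$, so that $\nabla\bar P=\nabla P-\nabla\Phi$, and obtain the energy identity
\[
\tfrac12\tfrac{d}{dt}\|\bar P\|^2=-\int_U\big(K(|\nabla p_1|)\nabla p_1-K(|\nabla p_2|)\nabla p_2\big)\cdot\nabla P\,dx+\int_U\big(K(|\nabla p_1|)\nabla p_1-K(|\nabla p_2|)\nabla p_2\big)\cdot\nabla\Phi\,dx-\int_U\Phi_t\bar P\,dx.
\]
The first integral is the dissipative term; by the monotonicity Lemma~\ref{lemmono} applied pointwise with $y'=\nabla p_1$, $y=\nabla p_2$ it is bounded below by $d_5 J_0$, where $J_0=\int_U|\nabla P|^{2+\beta_1}(1+|\nabla p_1|+|\nabla p_2|)^{-(\beta_1+\beta_2)}\,dx$.

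Next I would estimate the two perturbation terms. For the $\nabla\Phi$-integral, the bound $K(\xi)\xi\le d_3\xi^{1-\beta_2}$ (which is \eqref{mc2} with $m=1$) together with H\"older's inequality using the conjugate exponents $\tfrac{2-\beta_2}{1-\beta_2}$ and $2-\beta_2$ gives control by $C\Lambda(t)^{1-\beta_2}\|\nabla\Phi\|_{L^{2-\beta_2}}\le C\Lambda(t)^{1-\beta_2}D(t)$. For the last integral, Cauchy--Schwarz and the trivial bound $\|\bar P\|\le\|\bar p_1\|+\|\bar p_2\|$ yield $\big|\int_U\Phi_t\bar P\big|\le\|\Phi_t\|\,\|\bar P\|\le D(t)(\|\bar p_1\|+\|\bar p_2\|)$. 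These already produce the two factors $\Lambda^{1-\beta_2}$ and $\|\bar p_1\|+\|\bar p_2\|$ appearing in the target, so the remaining work is purely on the dissipation.

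\textbf{The main obstacle} is converting the weighted quantity $J_0$ into a genuine power of $\|\bar P\|$; this is the crux of the argument. I would interpolate: writing $|\nabla P|^{2-\beta_2}=\big(|\nabla P|^{2+\beta_1}(1+|\nabla p_1|+|\nabla p_2|)^{-(\beta_1+\beta_2)}\big)^{s}\,(1+|\nabla p_1|+|\nabla p_2|)^{a}$ with $s=\tfrac{2-\beta_2}{2+\beta_1}$ and $a=s(\beta_1+\beta_2)$, and applying H\"older with exponent $1/s$, the weight power collapses to exactly $a/(1-s)=2-\beta_2$. Since $\int_U(1+|\nabla p_1|+|\nabla p_2|)^{2-\beta_2}\,dx\le C\Lambda(t)^{2-\beta_2}$ (by \eqref{bi0}), this gives $\|\nabla P\|_{L^{2-\beta_2}}^{2+\beta_1}\le C\,\Lambda(t)^{\beta_1+\beta_2}J_0$, i.e. $J_0\ge C\Lambda(t)^{-(\beta_1+\beta_2)}\|\nabla P\|_{L^{2-\beta_2}}^{2+\beta_1}$. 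The care here is in choosing $s$ so that the two exponents land simultaneously on $2+\beta_1$ and $2-\beta_2$. I then pass from $\nabla P$ to $\bar P$: Poincar\'e--Sobolev \eqref{PSi} and $\nabla\bar P=\nabla P-\nabla\Phi$ give $\|\nabla P\|_{L^{2-\beta_2}}\ge C_{\rm PS}^{-1}\|\bar P\|-\|\nabla\Phi\|_{L^{2-\beta_2}}$; raising to the power $2+\beta_1$ via \eqref{bi1}, and using the embedding $L^{2+\beta_1}(U)\hookrightarrow L^{2-\beta_2}(U)$ to absorb the remainder $\|\nabla\Phi\|_{L^{2-\beta_2}}^{2+\beta_1}\le C\|\nabla\Phi\|_{L^{2+\beta_1}}^{2+\beta_1}\le CD(t)$, one concludes $-d_5 J_0\le -C\Lambda(t)^{-(\beta_1+\beta_2)}\|\bar P\|^{2+\beta_1}+CD(t)$.

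Finally I would assemble the pieces, using $\Lambda\ge1$ to bound the bare $D(t)$ by $\Lambda^{1-\beta_2}D(t)$, to arrive at
\[
\tfrac{d}{dt}\,y(t)\le -C\,\Lambda(t)^{-(\beta_1+\beta_2)}\,y(t)^{\frac{2+\beta_1}{2}}+C\big(\Lambda(t)^{1-\beta_2}+\|\bar p_1(t)\|+\|\bar p_2(t)\|\big)D(t).
\]
This is precisely the hypothesis of Lemma~\ref{ODE2} with $\theta=(2+\beta_1)/2$ and $h(t)=C\Lambda(t)^{-(\beta_1+\beta_2)}$, so that $f/h=C\Lambda^{\beta_1+\beta_2}(\Lambda^{1-\beta_2}+\|\bar p_1\|+\|\bar p_2\|)D$. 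Then \eqref{ubode} yields \eqref{one3}, while under the divergence assumption $\int_0^\infty\Lambda^{-(\beta_1+\beta_2)}\,dt=\infty$ the estimate \eqref{ulode} yields \eqref{lmsp1}, completing the proof.
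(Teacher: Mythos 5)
Your proposal is correct and follows essentially the same route as the paper: the same energy identity after testing \eqref{Wequl} with $\bar P$, the monotonicity Lemma~\ref{lemmono} for the dissipation, \eqref{mc2} with $m=1$ plus H\"older for the $\nabla\Phi$-term and Cauchy--Schwarz for the $\Phi_t$-term, the weighted H\"older interpolation combined with Poincar\'e--Sobolev \eqref{PSi} to produce the term $-C\Lambda(t)^{-(\beta_1+\beta_2)}\|\bar P\|^{2+\beta_1}$, and finally Lemma~\ref{ODE2} for both conclusions. The only cosmetic difference is ordering: the paper passes from $\nabla(p_1-p_2)$ to $\nabla\bar P$ pointwise inside the dissipation integral via \eqref{bi1} before interpolating, whereas you interpolate first and subtract the $\|\nabla\Phi\|_{L^{2-\beta_2}}$ correction at the norm level; both steps cost the same admissible $CD(t)$ term.
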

\begin{proof}
First, we find a differential inequality for $\| \bar P(t)\|^2$. We define 
\beqs
\omega(x,t) = 1+|\nabla p_2(x,t)|+|\nabla p_1(x,t)|.
\eeqs

Multiplying \eqref{Wequl} by $\bar P$, integrating the resulting equation over $U$, and
using integration by parts, we obtain 
\begin{align*}
\frac 12 \frac{d}{dt}\int_U \bar P^2dx&= -\int_U[  K(|\nabla p_1|)\nabla p_1- K(|\nabla p_2|)\nabla p_2]\cdot \nabla \bar Pdx-\int_U \Phi_t\bar Pdx,\\
\intertext{thus,}
\frac 12 \frac{d}{dt}\int_U \bar P^2dx&=- \int_U[  K(|\nabla p_1|)\nabla p_1- K(|\nabla p_2|)\nabla p_2]\cdot \nabla (p_1-p_2)dx\\
&\quad + \int_U[  K(|\nabla p_1|)\nabla p_1- K(|\nabla p_2|)\nabla p_2]\cdot \nabla \Phi dx-\int_U \Phi_t\bar Pdx.
\end{align*}

Using the monotonicity in Lemma \ref{lemmono} for the first integral on the RHS, and property \eqref{mc2} with $m=1$ for the second integral, we have  
\beq\label{DiffInq4W}
 \begin{aligned}
\frac 12 \frac{d}{dt}\int_U \bar P^2dx
&\le-d_5\int_U  \frac{|\nabla (p_1- p_2)|^{2+\beta_1}}{\omega^{\beta_1+\beta_2}}dx 
+d_3 \int_U(  |\nabla p_1|^{1-\beta_2}+|\nabla p_2|^{1-\beta_2} )|\nabla \Phi |dx\\
&\quad+\int_U |\Phi_t|(|\bar p_1|+|\bar p_2|)dx \eqdef -J_1+J_2+J_3.
\end{aligned}
\eeq

By \eqref{bi1},
\beq\label{J11}
\begin{aligned}
-J_1
\le - C_1\int_U  \frac{|\nabla \bar P|^{2+\beta_1}}{\omega^{\beta_1+\beta_2}}dx  
+ C\int_U  \frac{|\nabla \Phi|^{2+\beta_1}}{\omega^{\beta_1+\beta_2}}dx\\
\le - C_1\int_U  \frac{|\nabla \bar P|^{2+\beta_1}}{\omega^{\beta_1+\beta_2}}dx  
+ C\int_U  |\nabla \Phi|^{2+\beta_1}dx,
\end{aligned}
\eeq
where $C_1=d_5/2^{1+\beta_1}$. 
Using H\"older's inequality for $J_2$ and $J_3$ terms in \eqref{DiffInq4W}, we find that 
\beq\label{J22}
\begin{split}
J_2&\le C\Big(\norm{\nabla p_1}_{L^{2-\beta_2}}^{1-\beta_2}+ \norm{\nabla p_2}_{L^{2-\beta_2}}^{1-\beta_2}\Big)\norm{\nabla \Phi}_{L^{2-\beta_2}}.
\end{split}
\eeq
\beq\label{J33}
J_3 \le C(\|\bar p_{1}\|+\|\bar p_{2}\| )\norm{\Phi_t}.
\eeq

Utilizing estimates \eqref{J11}--\eqref{J33} in \eqref{DiffInq4W}, we obtain 
\beqs
 \frac d{dt}\| \bar P\|^2 \le - 2C_1\int_U  \frac{|\nabla \bar P|^{2+\beta_1}}{\omega^{\beta_1+\beta_2}}dx  +C\Big(\norm{\nabla \Phi}_{L^{2+\beta_1}}^{2+\beta_1}+ \sum_{i=1,2}\|\nabla p_i\|_{L^{2-\beta_2}}^{1-\beta_2} \|\nabla \Phi\|_{L^{2-\beta_2}}  + \sum_{i=1,2} \| \bar p_i  \|\norm{\Phi_t}\Big).
\eeqs
Thus,
\beq\label{DW}
\begin{aligned}
 \frac d{dt}\| \bar P(t)\|^2 
&\le  - 2C_1\int_U  \frac{|\nabla \bar P|^{2+\beta_1}}{\omega^{\beta_1+\beta_2}}dx   +C( \Lambda(t)^{1-\beta_2} +\|\bar p_1(t)\|+\|\bar p_2(t)\|)D(t).
\end{aligned}
\eeq

Applying H\"{o}lder's inequality with powers $\frac{2+\beta_1}{2-\beta_2}$ and $\frac{2+\beta_1}{\beta_1+\beta_2}$, we have
\beqs
\int_U |\nabla \bar P|^{2-\beta_2} dx
=\int_U \frac{|\nabla \bar P|^{2-\beta_2}}{\omega^\frac{(\beta_1+\beta_2)(2-\beta_2)}{2+\beta_1}} \cdot \omega^\frac{(\beta_1+\beta_2)(2-\beta_2)}{2+\beta_1} dx \le  \Big(\int_U \frac{|\nabla \bar P|^{2+\beta_1}}{\omega^{\beta_1+\beta_2}} dx\Big)^\frac{2-\beta_2}{2+\beta_1}\Big(\int_U \omega^{2-\beta_2} dx\Big)^\frac{\beta_1+\beta_2}{2+\beta_1}.
\eeqs
This implies
\beq\label{em0}
\Lambda(t)^{\beta_1+\beta_2}  \int_U \frac{|\nabla \bar P|^{2+\beta_1}}{\omega^{\beta_1+\beta_2}} dx\ge  \Big(\int_U |\nabla \bar P|^{2-\beta_2} dx\Big)^\frac{2+\beta_1}{2-\beta_2}.
\eeq
Combining \eqref{em0}  with  Poincar\'e-Sobolev inequality \eqref{PSi} yields
\beq\label{embL2}
 \int_U \frac{|\nabla \bar P|^{2+\beta_1}}{\omega^{\beta_1+\beta_2}} dx 
 \ge C_{\rm PS}^{-(2+\beta_1)} \Lambda(t)^{-(\beta_1+\beta_2)} \Big(\int_U \bar P^2 dx \Big)^\frac{2+\beta_1}2.
\eeq

Using  \eqref{embL2} to estimate the first integral on the RHS of \eqref{DW}, we obtain for all $t>0$ that
\beq\label{66}
 \frac d{dt}\| \bar P(t)\|^2 
\le -C_2 \Lambda(t)^{-(\beta_1+\beta_2)} 
\|\bar P(t)\|^{2+\beta_1}  +C( \Lambda(t)^{1-\beta_2}+\|\bar p_1(t)\|+\|\bar p_2(t)\|)D(t),
\eeq
where $C_2=2C_1 C_{\rm PS}^{-(2+\beta_1)}$.
Denote $y(t)=\| \bar P(t)\|^2$ and rewrite \eqref{66} as
\beq\label{6one}
 \frac {dy}{dt} 
\le -C_2 \Lambda(t)^{-(\beta_1+\beta_2)} 
y(t)^\frac{2+\beta_1}{2}  +C( \Lambda(t)^{1-\beta_2}+\|\bar p_1(t)\|+\|\bar p_2(t)\|)D(t).
\eeq

Applying \eqref{ubode} in  Lemma \ref{ODE2} to \eqref{6one} proves \eqref{one3}.
Similarly, applying \eqref{ulode} in Lemma \ref{ODE2} to \eqref{6one}, we obtain \eqref{lmsp1}.
\end{proof}

Next, we combine Proposition \ref{prop61} with results in section \ref{boundsec} to derive more specific estimates.

According to \eqref{west}, 
\beq\label{XvY}
 1+\|\bar p_1\|+\|\bar p_2\|\le \mathcal{X}(t), 
\eeq
where
\beqs
\mathcal{X}(t)= 1+ \sum_{i=1,2} \Big\{\|\bar p_i(0)\|+ \Big(Env f[\Psi_i](t)\Big)^\frac{1}{2-\beta_2}\Big\}.
\eeqs

By \eqref{Hest2} and \eqref{wteq9},
\beq\label{U2}
\Lambda(t)\le  C\hat {\mathcal Y}(t)^{\frac 1{2-\beta_2}},
\eeq
where
\beqs
\hat {\mathcal Y}(t)=
\begin{cases} 
\displaystyle 1+  \sum_{i=1,2}\Big(\|\bar p_i(0)\|^2 +e^{-\frac t 2}\|\nabla  p_i(0)\|_{L^{2-\beta_2}}^{2-\beta_2} \\
\quad  +  (Env f[\Psi_i](t))^\frac{2}{2-\beta_2}+ \int_0^t e^{-\frac 12(t-\tau)} \| \nabla \Psi_{i,t} (\tau)\|^2 d\tau\Big) &\text{ if }  0\le t<1,  \\
\displaystyle 1+\sum_{i=1,2} \Big(\|\bar p_i(0)\|^2+ (Env f[\Psi_i](t))^\frac{2}{2-\beta_2}+\int_{t-1}^{t}\norm{\nabla\Psi_{i,t}(\tau)}^2 d\tau\Big) &\text{ if }  t\ge 1.
\end{cases}
\eeqs

To simplify  expressions of our estimates, we set 
\beq\label{Y0def}
\mathcal Y_0=  1+ \sum_{i=1,2}\Big(\|\bar p_i(0)\|^2 +\|\nabla  p_i(0)\|_{L^{2-\beta_2}}^{2-\beta_2} \Big),
\eeq
and define the function
\beq\label{deftilY}
\widetilde{\mathcal Y}(t)= \mathcal Y_0+\sum_{i=1,2} (Env f[\Psi_i](t))^\frac{2}{2-\beta_2}+
\begin{cases} 
  \int_0^t e^{-\frac 12(t-\tau)} \sum_{i=1,2}\| \nabla \Psi_{i,t} (\tau)\|^2 d\tau&\text{ if }  0\le t<1,  \\
\int_{t-1}^{t}\sum_{i=1,2}\norm{\nabla\Psi_{i,t}(\tau)}^2 d\tau &\text{ if }  t\ge 1.
\end{cases}
\eeq

Then
$\hat {\mathcal Y}(t)\le \widetilde{\mathcal Y}(t)$ and $\mathcal X(t)\le \widetilde{\mathcal Y}^{1/2}(t)$.
These properties and \eqref{XvY}, \eqref{U2} imply  
\beq\label{U3}
\Lambda(t)\le  C\widetilde {\mathcal Y}(t)^{\frac 1{2-\beta_2}},
\eeq
\beq\label{YY} 
 \Lambda(t)^{1-\beta_2}+\|\bar p_1(t)\|+\|\bar p_2(t)\|\le C(\widetilde{\mathcal Y}(t)^\frac{1-\beta_2}{2-\beta_2}+\widetilde{\mathcal Y}(t)^{1/2})\le C\widetilde{\mathcal Y}(t)^{1/2}. \eeq
Above, we used the fact $1/2>(1-\beta_2)/(2-\beta_2)$ and  $\widetilde{\mathcal Y}(t) \ge 1$ . 

For asymptotic estimates, we will use the following numbers
\beq\label{tilAKdef}
\begin{aligned}
\widetilde{\mathcal{A}}&=\Big(\sum_{i=1,2}\limsup_{t\to\infty} f[\Psi_i](t)\Big)^\frac1{2-\beta_2},\quad
\widetilde{\mathcal K}={\widetilde{\mathcal{A}}}^2
+\sum_{i=1,2}\limsup_{t\to\infty}\int_{t-1}^{t}\norm{\nabla\Psi_{i,t}(\tau)}^2d\tau,\\
\mathcal D&=\limsup_{t\to\infty}D(t).
\end{aligned}
\eeq

Now we can estimate the $L^2$-norm of $\bar P(t)$ utterly in terms of the initial and boundary data. 


\begin{theorem}\label{theo62}
For $t\ge 0$,
\beq\label{neww}
\|\bar P(t)\|^2 \le \|\bar P(0)\|^2+C\Big\{ Env \Big[ \widetilde{\mathcal Y}(t)^{\frac {\beta_1+\beta_2}{2-\beta_2}+\frac12} D(t)\Big] \Big\}^\frac{2}{2+\beta_1}.
\eeq

If $\widetilde{\mathcal K}<\infty$  then
\beq\label{6one3}
 \limsup_{t\to\infty}\|\bar P(t)\|^2 \le 
C \Big\{(1 +\widetilde{\mathcal K})^{\frac {\beta_1+\beta_2}{2-\beta_2}+\frac12} \mathcal D\Big\}^\frac{2}{2+\beta_1}.
\eeq 
\end{theorem}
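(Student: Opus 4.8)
The plan is to read Theorem \ref{theo62} off directly from Proposition \ref{prop61}, feeding in the a priori bounds on $\Lambda(t)$ and $\|\bar p_i(t)\|$ already assembled in \eqref{U3} and \eqref{YY} (and, for the asymptotics, in \eqref{nodelta} and \eqref{wteq11}). No new differential inequality is required; all the work lies in dominating the data-dependent coefficient that sits inside the envelope in \eqref{one3} and inside the $\limsup$ in \eqref{lmsp1}.

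For the all-time estimate \eqref{neww}, I would begin from \eqref{one3} and bound the product $\Lambda(t)^{\beta_1+\beta_2}\big(\Lambda(t)^{1-\beta_2}+\|\bar p_1(t)\|+\|\bar p_2(t)\|\big)$. By \eqref{U3} one has $\Lambda(t)^{\beta_1+\beta_2}\le C\,\widetilde{\mathcal Y}(t)^{(\beta_1+\beta_2)/(2-\beta_2)}$, while \eqref{YY} bounds the second factor by $C\,\widetilde{\mathcal Y}(t)^{1/2}$; multiplying the two gives
\[
\Lambda(t)^{\beta_1+\beta_2}\big(\Lambda(t)^{1-\beta_2}+\|\bar p_1(t)\|+\|\bar p_2(t)\|\big)\le C\,\widetilde{\mathcal Y}(t)^{\frac{\beta_1+\beta_2}{2-\beta_2}+\frac12}.
\]
Multiplying through by $D(t)\ge 0$ preserves this, so the argument of $Env[\cdot]$ in \eqref{one3} is pointwise dominated by $C\,\widetilde{\mathcal Y}(t)^{(\beta_1+\beta_2)/(2-\beta_2)+1/2}D(t)$. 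I then invoke the defining property of the envelope (Definition \ref{Env}): any continuous increasing majorant of the larger function also majorizes the smaller, so $Env$ of the smaller expression is bounded by $C$ times $Env$ of the larger. Raising to the power $2/(2+\beta_1)$ and keeping $\|\bar P(0)\|^2$ untouched yields exactly \eqref{neww}.

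For the asymptotic statement \eqref{6one3}, I would take $\limsup_{t\to\infty}$ in the coefficient of \eqref{lmsp1}, using the long-time bounds of section \ref{boundsec}. By \eqref{wteq11}, together with the definitions \eqref{tilAKdef} and the facts $\widetilde{\mathcal A}^2\le\widetilde{\mathcal K}$ and $\limsup_t\int_{t-1}^t\|\nabla\Psi_{i,t}\|^2\le\widetilde{\mathcal K}$, one gets $\limsup_t\int_U|\nabla p_i|^{2-\beta_2}dx\le C(1+\widetilde{\mathcal K})$, whence $\limsup_t\Lambda(t)\le C(1+\widetilde{\mathcal K})^{1/(2-\beta_2)}$; by \eqref{nodelta} one gets $\limsup_t\|\bar p_i(t)\|^2\le C(1+\widetilde{\mathcal A}^2)\le C(1+\widetilde{\mathcal K})$, so $\limsup_t\|\bar p_i(t)\|\le C(1+\widetilde{\mathcal K})^{1/2}$. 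Using $(1-\beta_2)/(2-\beta_2)\le 1/2$ and $1+\widetilde{\mathcal K}\ge 1$, the factor $\Lambda^{1-\beta_2}+\|\bar p_1\|+\|\bar p_2\|$ then has $\limsup$ bounded by $C(1+\widetilde{\mathcal K})^{1/2}$, while $\limsup_t\Lambda^{\beta_1+\beta_2}\le C(1+\widetilde{\mathcal K})^{(\beta_1+\beta_2)/(2-\beta_2)}$. Since $\limsup_t D(t)=\mathcal D$, submultiplicativity of the limit superior over products of nonnegative functions gives
\[
\limsup_{t\to\infty}\Big\{\Lambda^{\beta_1+\beta_2}\big(\Lambda^{1-\beta_2}+\|\bar p_1\|+\|\bar p_2\|\big)D(t)\Big\}\le C\,(1+\widetilde{\mathcal K})^{\frac{\beta_1+\beta_2}{2-\beta_2}+\frac12}\,\mathcal D,
\]
and, since $x\mapsto x^{2/(2+\beta_1)}$ is continuous and increasing so that $\limsup g^{2/(2+\beta_1)}=(\limsup g)^{2/(2+\beta_1)}$, raising to that power produces \eqref{6one3}.

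The only delicate points, rather than genuine obstacles, are bookkeeping: verifying $(1-\beta_2)/(2-\beta_2)\le 1/2$ so the $\Lambda^{1-\beta_2}$ term is absorbed into the common exponent $1/2$, confirming $\widetilde{\mathcal A}^2\le\widetilde{\mathcal K}$ so the $L^2$-bounds on $\bar p_i$ fold cleanly into $\widetilde{\mathcal K}$, and applying the envelope property so that replacing the coefficient by its larger majorant inside $Env[\cdot]$ costs only a multiplicative constant. Everything else is direct substitution into Proposition \ref{prop61}.
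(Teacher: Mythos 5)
Your proposal is correct and follows essentially the same route as the paper's own proof: for \eqref{neww} you substitute the bounds \eqref{U3} and \eqref{YY} into the coefficient inside the envelope of \eqref{one3}, and for \eqref{6one3} you feed the limit bounds coming from \eqref{wteq11} and \eqref{nodelta} into \eqref{lmsp1}, consolidating exponents via $1/2>(1-\beta_2)/(2-\beta_2)$, exactly as the paper does. The only point left implicit (in both your argument and the paper's) is that $\widetilde{\mathcal K}<\infty$ makes $\limsup_{t\to\infty}\Lambda(t)$ finite, which guarantees the hypothesis $\int_0^\infty\Lambda(t)^{-(\beta_1+\beta_2)}\,dt=\infty$ required to invoke \eqref{lmsp1}.
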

\begin{proof}
It follows from \eqref{one3}, \eqref{U3} and  \eqref{YY} that
\beqs
\|\bar P(t)\|^2 \le \|\bar P(0)\|^2+C\Big\{ Env \Big[ \widetilde{\mathcal Y}(t)^{\frac{\beta_1+\beta_2} {2-\beta_2}}\widetilde{\mathcal Y}(t)^\frac12 D(t) \Big] \Big\}^\frac{2}{2+\beta_1}
\eeqs
which implies  \eqref{neww}.

We have from limit estimates  \eqref{wteq11} and  \eqref{nodelta} that
\beq\label{limsup}
\limsup_{t\to\infty} \Lambda(t) \le C(1 +\widetilde{\mathcal K})^\frac1{2-\beta_2}, \quad
\limsup_{t\to\infty}(1+\|\bar p_1(t)\| +\|\bar p_2(t)\|)\le  C(1+\widetilde{\mathcal{A}})\le  C(1+{\widetilde{\mathcal K}})^\frac{1}{2}.
\eeq
Combining \eqref{limsup} with \eqref{lmsp1} we obtain
\beq\label{Lsup2}
 \limsup_{t\to\infty}\|\bar P(t)\|^2
\le 
C \Big\{(1 +\widetilde{\mathcal K})^\frac {\beta_1+\beta_2}{2-\beta_2}
\Big[(1+\widetilde{\mathcal K})^\frac{1}{2}+(1 +\widetilde{\mathcal K})^\frac {1-\beta_2}{2-\beta_2}\Big] \mathcal D\Big\}^\frac{2}{2+\beta_1}.
\eeq 
Note that $1/2>(1-\beta_2)/(2-\beta_2)$, 
then  \eqref{6one3} follows \eqref{Lsup2}.
\end{proof}

\section{Structural stability}\label{strucsec}
In this section, we consider the case $K(\xi)=K_I(\xi,\vec a)$ in \eqref{KIxa}, and study the dependence of the solutions to IBVP  \eqref{IBVP}  on the coefficient vector $\vec a$. 

Let $N\ge 1$ and  the exponent vector $\vec{\alpha} =(-\alpha, 0, \alpha_1, \ldots, \alpha_N)$ be fixed.
Since $\vec a$ satisfies condition \eqref{aicond}, we denote the set of admissible $\vec a$ by $S$, that is,
\beqs
S=\{ \vec{a} =(a_{-1}, a_0, \ldots, a_N): a_{-1}, a_N>0, a_0, a_1, \ldots, a_{N-1}\ge 0\}.
\eeqs

The following ``perturbed monotonicity'' is important for our structural stability in this section; it plays the same role as the monotonicity (Lemma \ref{lemmono}) for the continuous dependence in section \ref{dependsec}.
Below, the notation $\vee$, resp. $\wedge$,  denotes the maximum, resp. minimum, of two numbers or two vectors meaning coordinate-wise.
\begin{lemma}[Perturbed Monotonicity]\label{lempm}
Let  $K_I(\xi,\vec a)$ be defined as in  \eqref{KIxa}. 
For any coefficient vectors  $\vec a^{(1)}$, $\vec a^{(2)}\in S$,  and  any  $y,y'\in \mathbb R^n$, one has 
\begin{multline}\label{monoper}
\big(K_I(|y'|,\vec a^{(1)})y'-K_I(|y|,\vec a^{(2)})y\big)\cdot (y'-y) \ge \frac{d_6  |y-y'|^{2+\beta_1}}{(1+|y|+|y'|)^{\beta_1+\beta_2}}\\
 -d_7 K\big(|y|\vee |y'|,\vec a^{(1)}\wedge \vec a^{(2)}\big)  \ (|y|\vee |y'|) \  |\vec{a}^{(1)}-\vec{a}^{(2)}|\ |y-y'|,
\end{multline}
where $d_6=d_6(\vec a^{(1)},\vec a^{(2)})$ and $d_7=d_7(\vec a^{(1)},\vec a^{(2)})$ are positive constants defined by
\begin{align*}
d_6&=\frac{ 1-\beta_2}{(\beta_1+1)\Big [2(N+2)\max\big\{1,a_i^{(j)}:i=-1,0,\ldots,N,\ j=1,2\big\}\Big]^{\beta_1+1}} , 
\\
d_7&=\frac{N+1}{(1-\alpha)\min\big\{a_{-1}^{(1)},a_{-1}^{(2)},a_N^{(1)},a_N^{(2)}\big\}}. 
\end{align*}
\end{lemma}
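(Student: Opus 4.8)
The plan is to reduce the mixed-coefficient expression to the genuine monotonicity of Lemma \ref{lemmono} plus a controllable perturbation that measures the sensitivity of $K_I$ to $\vec a$. First I would write
\beqs
K_I(|y'|,\vec a^{(1)})y'-K_I(|y|,\vec a^{(2)})y=\big[K_I(|y'|,\vec a^{(1)})y'-K_I(|y|,\vec a^{(1)})y\big]+\big[K_I(|y|,\vec a^{(1)})-K_I(|y|,\vec a^{(2)})\big]y,
\eeqs
and take the inner product with $y'-y$. The first bracket uses a single coefficient vector $\vec a^{(1)}$, so Lemma \ref{lemmono} applies verbatim and bounds its contribution below by $d_5(\vec a^{(1)})|y-y'|^{2+\beta_1}/(1+|y|+|y'|)^{\beta_1+\beta_2}$. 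The second bracket is the perturbation; by Cauchy--Schwarz its absolute value is at most $|K_I(|y|,\vec a^{(1)})-K_I(|y|,\vec a^{(2)})|\,|y|\,|y'-y|$.

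To recover the stated constant $d_6$, I would check $d_5(\vec a^{(1)})\ge d_6$. Using \eqref{MM}, $d_5(\vec a^{(1)})=d_2(\vec a^{(1)})(1-\beta_2)/(2^{\beta_1+1}(\beta_1+1))$ with $d_2(\vec a^{(1)})=\max\{1,\xi_0^{(1)}\}^{-(1+\beta_1)}$ and $\xi_0^{(1)}=\sum_i a_i^{(1)}\le (N+2)\max\{1,a_i^{(j)}\}$; substituting this bound for $\max\{1,\xi_0^{(1)}\}$ reproduces exactly the denominator $[2(N+2)\max\{1,a_i^{(j)}\}]^{\beta_1+1}$ of $d_6$, so the first bracket already supplies the full leading term of \eqref{monoper}.

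The heart of the matter, and what I expect to be the main obstacle, is the coefficient-Lipschitz estimate for the second bracket. I would exploit the identity $K_I(\xi,\vec a)\,\xi=s(\xi,\vec a)$ with $s(\xi,\vec a)=G_I^{-1}(\xi,\vec a)$, so that $|K_I(|y|,\vec a^{(1)})-K_I(|y|,\vec a^{(2)})|\,|y|=|s_1-s_2|$, where $s_j=s(|y|,\vec a^{(j)})$. To estimate $|s_1-s_2|$ I would homotope along $\vec a(\tau)=(1-\tau)\vec a^{(2)}+\tau\vec a^{(1)}\in S$ and set $s(\tau)=s(|y|,\vec a(\tau))$, so $s(0)=s_2$, $s(1)=s_1$. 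Differentiating the implicit relation $G_I(s(\tau),\vec a(\tau))=|y|$ gives $s'(\tau)=-(\partial_s G_I)^{-1}\sum_i(a_i^{(1)}-a_i^{(2)})s(\tau)^{1+\alpha_i}$. Since $\partial_s G_I=\sum_i a_i(\tau)(1+\alpha_i)s^{\alpha_i}\ge(1-\alpha)\sum_i a_i(\tau)s^{\alpha_i}$ and $a_i(\tau)\ge b_i:=a_i^{(1)}\wedge a_i^{(2)}$, I would bound
\beqs
|s'(\tau)|\le\frac{s(\tau)}{1-\alpha}\,|\vec a^{(1)}-\vec a^{(2)}|\,\frac{\sum_i s(\tau)^{\alpha_i}}{b_{-1}s(\tau)^{-\alpha}+b_N s(\tau)^{\alpha_N}}.
\eeqs
The ratio is the crucial estimate: bounding the numerator by the two extreme powers $s^{-\alpha}$ and $s^{\alpha_N}$ (the endpoints counting once, the remaining $N$ powers each $\le s^{-\alpha}+s^{\alpha_N}$) gives $\sum_i s^{\alpha_i}\le(N+1)(s^{-\alpha}+s^{\alpha_N})$, whence the ratio is $\le (N+1)/\min\{b_{-1},b_N\}$. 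This is precisely where the factor $N+1$ and the quantity $\min\{a_{-1}^{(1)},a_{-1}^{(2)},a_N^{(1)},a_N^{(2)}\}$ in $d_7$ are produced, and it relies on the uniform lower bound $\partial_s G_I\ge(1-\alpha)g_I$.

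Finally, because $\vec a(\tau)\ge\vec a^{(1)}\wedge\vec a^{(2)}$ coordinatewise, monotonicity of $G_I^{-1}$ in $\vec a$ gives $s(\tau)\le s(|y|,\vec a^{(1)}\wedge\vec a^{(2)})$ uniformly in $\tau$; integrating $|s'(\tau)|$ over $[0,1]$ then yields $|s_1-s_2|\le d_7\,s(|y|,\vec a^{(1)}\wedge\vec a^{(2)})\,|\vec a^{(1)}-\vec a^{(2)}|$. Since $\xi\mapsto K_I(\xi,\cdot)\xi=s(\xi,\cdot)$ is increasing (Corollary \ref{incor} with $m=1\ge\beta_2$), I would replace $s(|y|,\cdot)$ by $K(|y|\vee|y'|,\vec a^{(1)}\wedge\vec a^{(2)})(|y|\vee|y'|)$ to reach the exact error term of \eqref{monoper}. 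Adding the two contributions completes the argument, the cases $|y|=0$ and $\vec a^{(1)}=\vec a^{(2)}$ being trivial.
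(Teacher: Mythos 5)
Your proof is correct, but it takes a genuinely different route from the paper's. The paper runs a \emph{single joint homotopy} in both variables: it sets $\gamma(t)=ty+(1-t)y'$ and $\vec b(t)=t\vec a^{(1)}+(1-t)\vec a^{(2)}$, applies the fundamental theorem of calculus to $z(t)=K(|\gamma(t)|,\vec b(t))\gamma(t)\cdot(y-y')$, and splits $z'$ into a $\xi$-derivative part and an $\vec a$-derivative part; because the coefficients vary along the path, it cannot invoke Lemma \ref{lemmono} as a black box and must re-derive the monotonicity estimate with the uniform bound $d_2(\vec b(t))\ge d_2^*$, and it handles the perturbation by computing $K_{a_i}=-K\,s^{\alpha_i}/(g+sg_s)$ and bounding $|K_{\vec a}|\le d(\vec b(t))K$ along the path. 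You instead decompose algebraically (adding and subtracting $K_I(|y|,\vec a^{(1)})y$), reuse Lemma \ref{lemmono} verbatim for the fixed-coefficient piece, and isolate the coefficient sensitivity in a homotopy in $\vec a$ alone, exploiting the identity $K_I(\xi,\vec a)\xi=s(\xi,\vec a)$ so that implicit differentiation of $G_I(s,\vec a)=\xi$ does the work; note your $s_{a_i}=-s^{1+\alpha_i}/\partial_s G_I$ is the same derivative as the paper's $K_{a_i}$ in disguise, since $K_{a_i}\xi=s_{a_i}$. Your constant bookkeeping is sound: $d_5(\vec a^{(1)})\ge d_6$ follows from $\max\{1,\xi_0^{(1)}\}\le(N+2)\max\{1,a_i^{(j)}\}$ via \eqref{MM}, the count $\sum_i s^{\alpha_i}\le(N+1)(s^{-\alpha}+s^{\alpha_N})$ via \eqref{bi3} produces exactly $d_7$, and the coordinatewise bounds $\vec a(\tau)\ge\vec a^{(1)}\wedge\vec a^{(2)}$ together with Corollary \ref{incor} justify passing to $K(|y|\vee|y'|,\vec a^{(1)}\wedge\vec a^{(2)})(|y|\vee|y'|)$. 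What each approach buys: yours is more modular (no repetition of the segment-integral estimate $\int_0^1|\gamma(t)|^{\beta_1}dt$) and in fact yields the slightly sharper leading constant $d_5(\vec a^{(1)})$, which depends on one coefficient vector only; the paper's joint-path argument is self-contained and treats the two perturbations symmetrically in one stroke, which is the natural template when the spatial and coefficient variations cannot be decoupled.
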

\begin{proof} 
Let  $\vec a^{(1)}$, $\vec a^{(2)}\in S$  and  $y,y'\in \mathbb R^n$.
Same as in Lemma \ref{lemmono}, it suffices to consider the case when the line segment $[y,y']$ does not contain the origin.
For $t \in[0,1]$, let 
$$\gamma(t)=ty+(1-t)y',\quad \vec{b}(t)=(b_{-1}(t),b_0(t),\ldots,b_N(t))\eqdef t\vec{a}^{(1)}+(1-t)\vec{a}^{(2)},$$
and define 
 $$z(t)=K(|\gamma(t)|,\vec{b}(t))\, \gamma(t)\cdot (y-y').$$
 
We have 
\beqs
I
\eqdef [K(|y|,\vec{a}^{(1)})y-K(|y'|,\vec{a}^{(2)})y']\cdot (y'-y)
=z(1)-z(0)=\int_0^1 z'(t) dt.
\eeqs

In calculations below, we use the following short-hand notation for partial derivatives 
$$X_s=\partial X/\partial s,\quad X_\xi=\partial X/\partial \xi,\quad X_{a_i}=\partial X/\partial a_i, \text{ and } X_{\vec a}=\partial X/\partial \vec a.$$

Elementary calculations give
\beq\label{Ifm}
I=\int_0^1 h_1(t)dt + \int_0^1 h_2(t) dt\eqdef I_1+I_2,
\eeq
where
\begin{align*}
h_1(t)&=K(|\gamma(t)|,\vec{b}(t))|y-y'|^2
 +K_\xi(|\gamma(t)|,\vec{b}(t))\frac{|\gamma(t)\cdot(y-y')|^2}{|\gamma(t)|},\\
h_2(t)&= K_{\vec{a}}(|\gamma(t)|,\vec{b}(t))(\vec{a}^{(1)}-\vec{a}^{(2)})\gamma(t)\cdot (y-y') .
\end{align*}

 $\bullet$ \emph{Estimation of $I_1$.} By Lemma \ref{LemKp},
\begin{align*}
K_\xi(|\gamma(t)|,\vec{b}(t))\ge -\beta_2 \frac{K(|\gamma(t)|,\vec{b}(t))}{|\gamma(t)|}.
\end{align*}

Same as the proof of \eqref{hp},
\beq\label{ks}
h_1(t)
  \ge  \frac{d_2(t) (1-\beta_2)|y'-y|^2}{(1+|y|+|y'|)^{\beta_1+\beta_2}} |\gamma(t)|^{\beta_1},
\eeq
where
\beqs
d_2(t)=\frac1{(\max\{1,\sum_{i=-1}^N \vec{b}_i(t)\})^{1+\beta_1}} .
\eeqs
We can estimate $d_2(\vec b(t))\ge d_2^*$ for all $t\in[0,1]$, where
\beqs
d_2^*=\frac1{((N+2)\max\{1,a_i^{(j)}:i=-1,0,\ldots,N,\ j=1,2\})^{1+\beta_1}} .
\eeqs
 Hence, it follows \eqref{ks} that 
\beq\label{int-h1}
\int_0^1 h_1(t) dt
  \ge  \frac{d_2^* (1-\beta_2)|y'-y|^2}{(1+|y|+|y'|)^{\beta_1+\beta_2}} \int_0^1 |\gamma(t)|^{\beta_1} dt.
\eeq
Same calculations in \eqref{ig1} and \eqref{ig2} of Lemma \ref{lemmono} show that
\beq\label{int-gam}
\int_0^1 |\gamma(t)|^{\beta_1} dt \ge  \frac{ |y'-y|^{\beta_1}}{2^{\beta_1+1}(\beta_1+1) } .
\eeq
It follows from \eqref{int-h1} and \eqref{int-gam} that
\beq\label{ksame}
I_1= \int_0^1 h_1(t)dt
 \ge \frac{d_6|y-y'|^{2+\beta_1}}{(1+|y|+|y'|)^{\beta_1+\beta_2}} .
\eeq

$\bullet$ \emph{Estimation of $I_2$.} We find the partial derivative of $K(\xi,\vec{a})$ in $\vec{a}$.
In calculations below, we denote, for convenience,  $\alpha_{-1}=-\alpha$.

For $i=-1,0,1,\ldots,N$,  taking the partial derivative in $a_i$  of the identity $K(\xi,\vec{a})=1/g(s(\xi,\vec{a}),\vec{a})$, we find that
\begin{align*}
 K_{a_i}(\xi,\vec{a})=-\frac{g_{a_i}+g_s\cdot  s_{a_i}}{g^2}=-K(\xi,\vec{a})\frac{g_{a_i}+g_s\cdot  s_{a_i}}{g}.
\end{align*}
Similarly, from  $sg(s,\vec a)=\xi $, we have for $i=-1,0,1,\dots, N$,
\begin{align*}
s_{a_i}\cdot g+s\cdot ( g_{a_i}+g_s\cdot  s_{a_i} )=0,
\end{align*}
which implies 
\beqs
s_{a_i}=\frac{-s\cdot g_{a_i}}{g+s\cdot g_s}.
\eeqs
Then we obtain
\beq\label{Kai}
K_{a_i}(\xi,\vec{a})=-K(\xi,\vec{a})\frac{g_{a_i}+g_s\cdot  \frac{-s\cdot g_{a_i}}{g+s\cdot g_s}}{g}
=-K(\xi,\vec{a})\frac{ g_{a_i}}{g+s\cdot g_s}=-K(\xi,\vec{a})\frac{ s^{\alpha_i}}{g+s\cdot g_s},
\eeq
and consequently,
\beqs
 \sum_{i=-1}^N |K_{a_i}(\xi,\vec{a})|
\le K(\xi,\vec{a}) \frac{ s^{-\alpha}+1+s^{\alpha_1}+\cdots+s^{\alpha_N} }{ (1-\alpha)a_{-1}s^{-\alpha}+a_0+(1+\alpha_1)a_1s^{\alpha_1}+\cdots+(1+\alpha_N)a_Ns^{\alpha_N} }.
\eeqs
Using \eqref{bi3}, we have
\beqs
1,s^{\alpha_1},\ldots, s^{\alpha_{N-1}}\le s^{-\alpha}+s^{\alpha_N}.
\eeqs
Hence,
\beqs
 \sum_{i=-1}^N |K_{a_i}(\xi,\vec{a})|
\le K(\xi,\vec{a}) \frac{(N+1)(s^{-\alpha}+s^{\alpha_N}) }{ (1-\alpha)a_{-1}s^{-\alpha}+(1+\alpha_N)a_N s^{\alpha_N} }
\le d (\vec{a})K(\xi,\vec{a}).
\eeqs
where
\beqs
d(\vec{a})=\frac{N+1}{\min\{(1-\alpha)a_{-1},(1+\alpha_N)a_N\}}.
\eeqs
Thus,
\beq\label{Ka}
|K_{\vec a}(\xi,\vec{a})| \le d(\vec{a}) K(\xi,\vec{a}).
\eeq

Now, by estimate \eqref{Ka}
\beq\label{ih2}
\begin{aligned}
| h_2(t)|
&\le |K_{\vec{a}}(|\gamma(t)|,\vec{b}(t))|\cdot |\vec{a}^{(1)}-\vec{a}^{(2)}|\cdot  | \gamma(t) |\cdot |y-y'|\\
&\le d(\vec{b}(t)) K(|\gamma(t)|,\vec{b}(t))| \gamma(t) |  |\vec{a}^{(1)}-\vec{a}^{(2)}|  \cdot |y-y'|.
\end{aligned}
\eeq

Since $a_i^{(j)}$ is positive for $i=-1,N$ and $j=1,2$, the number $d(\vec{b}(t))$, for all $t\in[0,1]$, can be bounded  by
\beqs 
d(\vec{b}(t)) \le \frac{N+1}{\min\{(1-\alpha)a_{-1}^{(1)},(1-\alpha)a_{-1}^{(2)},(1+\alpha_N)a_N^{(1)},(1+\alpha_N)a_N^{(2)}\}}\le d_7.
\eeqs

Applying Corollary \ref{incor} to $m=1\ge\beta_2$ gives that the function  $\xi K(\xi,\vec{b}(t))$ is increasing in $\xi$. This, together with \eqref{ih2}  and  the fact $|\gamma(t)|\le |y|\vee |y'|$, yields
\beqs
| h_2(t)|
\le d_7 K(|y|\vee |y'|,\vec{b}(t))  (|y|\vee |y'|)   |\vec{a}^{(1)}-\vec{a}^{(2)}|  \cdot |y-y'|.
\eeqs

Note from \eqref{Kai} that $K(\xi,\vec a)$ is decreasing in each $a_i$, hence 
\beq
K(\xi,\vec{b}(t))\le K(\xi,\vec a^{(1)}\wedge \vec a^{(2)}).
\eeq
Therefore,
\beqs
| h_2(t)|
\le d_7 K(|y|\vee |y'|,\vec a^{(1)}\wedge \vec a^{(2)}) (|y|\vee |y'|)   |\vec{a}^{(1)}-\vec{a}^{(2)}| |y-y'|,
\eeqs
and consequently,
\beq \label{Ke}
I_2\ge - \int_0^1 | h_2(t)|dt 
\ge - d_7 K(|y|\vee |y'|,\vec a^{(1)}\wedge \vec a^{(2)}) (|y|\vee |y'|)   |\vec{a}^{(1)}-\vec{a}^{(2)}| |y-y'|.
\eeq

Thus, we obtain \eqref{monoper} by combining \eqref{Ifm}, \eqref{ksame} and \eqref{Ke}.
\end{proof}


Let  $\coefset$ be a compact subset of $S$ and  let the boundary data $\psi(x,t)$ be fixed.

For $i=1,2$, let $\vec a^{(i)}\in \coefset$,  and let $p_i(x,t)$ be the  solution of \eqref{IBVP} with $K=K(\xi,\vec a^{(i)})$. 
Our goal is to estimate $p_1(x,t)-p_2(x,t)$ in terms of $\vec a^{(1)}-\vec a^{(2)}$. 

We will use the results in  section \ref{boundsec} for estimates of $p_1$ and $p_2$. 
Examining constants $d_2$, $d_3$, $d_4$ in section \ref{presec}, and $d_6$, $d_7$ in Lemma \ref{lempm}, we see that they can be made dependent only on $N$, $\alpha$, $\alpha_N$, $\beta_1$, $\beta_2$ and the following constants
\begin{align*}
\bar c_\coefset&=\max\{a_i: -1\le i\le N, \vec a=(a_i)_{i=-1}^N\in \coefset\}, \\
\underline c_\coefset&=\min\{a_{-1},a_N:\vec a=(a_i)_{i=-1}^N\in \coefset\}. 
\end{align*}

Consequently, the constants $C,C_0,C_1,\dots$ in calculations and bounds in section \ref{boundsec} can be made dependent only on $N$, $\alpha$, $\alpha_N$, $\beta_1$, $\beta_2$, $\bar c_\coefset$, $\underline c_\coefset$ and $C_{\rm PS}$.  
Such dependence will also apply to the generic, positive constant $C$ in this section. 

Let $\Psi$ be the extension of $\psi$ as in section \ref{L2sec}. The calculations in section \ref{dependsec}, when used in this section, will correspond to $\Psi_1=\Psi_2=\Psi$.

Let $P=p_1-p_2$. We have 
\begin{align}\label{ss1}
\frac{\partial P}{\partial t}&= \nabla \cdot \Big(K(|\nabla p_1|,\vec{a}^{(1)})\nabla p_1-K(|\nabla p_2|,\vec{a}^{(2)})\nabla p_2\Big)\quad \text{on }U\times(0,\infty),\\
P&=0\quad \text{on }\Gamma\times(0,\infty).\notag
\end{align}

Let $f(t)$  be defined by \eqref{fdef}, $\Lambda(t)$ by \eqref{tilU}, and $\mathcal Y_0$ by \eqref{Y0def}.
 We define, similar to \eqref{deftilY}, the function
\beqs
 {\mathcal Y}(t)=\mathcal Y_0 + (Env f(t))^\frac{2}{2-\beta_2}+ \begin{cases} 
 \int_0^t \norm{\nabla\Psi_t(\tau)}^2d\tau &\text{if }  0\le t<1,  \\
\int_{t-1}^{t}\norm{\nabla\Psi_t(\tau)}^2 d\tau&\text{if }  t\ge 1,
\end{cases} 
\eeqs
and, similar to \eqref{tilAKdef}, the numbers
\beq\label{AK}
\mathcal{A} =\limsup_{t\to\infty} f(t)^\frac1{2-\beta_2}
\quad\text{and}\quad
 \mathcal K ={\mathcal{A}}^2+\limsup_{t\to\infty}\int_{t-1}^t \norm{ \nabla\Psi_{t}(\tau) }^2d\tau.
\eeq
\begin{theorem} \label{DepCoeff} 
\
\begin{enumerate}
\item[\rm (i)] For $t\ge 0$, one has
\beq\label{ssc2}
\int_U |P(x,t)|^2 dx\le \int_U |P(x,0)|^2dx+C Env\,{ \mathcal Y}(t)^{\frac{2}{2-\beta_2}} |\vec{a}^{(1)}-\vec{a}^{(2)}|^\frac{2}{2+\beta_1} .
\eeq

\item[\rm (ii)] If  $\mathcal K<\infty$ then 
\beq\label{ssc3}
\limsup_{t\to\infty}\int_U |P(x,t)|^2 dx\le  C(1+\mathcal K)^\frac{2}{2-\beta_2} |\vec{a}^{(1)}-\vec{a}^{(2)}|^\frac{2}{2+\beta_1}.
\eeq
\end{enumerate}
\end{theorem}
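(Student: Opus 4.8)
The plan is to run the energy method on the difference $P=p_1-p_2$, using the perturbed monotonicity (Lemma \ref{lempm}) in place of the plain monotonicity (Lemma \ref{lemmono}) that drove Proposition \ref{prop61}. Since the two solutions share the boundary data $\psi$, we have $\Psi_1=\Psi_2=\Psi$, so the boundary difference vanishes and $P$ itself equals zero on $\Gamma$. First I would multiply the first equation of \eqref{ss1} by $P$, integrate over $U$, and integrate by parts; the boundary term drops because $P=0$ on $\Gamma$, giving
\[
\frac12\frac{d}{dt}\|P\|^2=-\int_U\big(K(|\nabla p_1|,\vec a^{(1)})\nabla p_1-K(|\nabla p_2|,\vec a^{(2)})\nabla p_2\big)\cdot\nabla P\,dx.
\]
Applying Lemma \ref{lempm} pointwise with $y'=\nabla p_1$, $y=\nabla p_2$ (so that $y'-y=\nabla P$ and $1+|y|+|y'|=\omega$, where $\omega=1+|\nabla p_1|+|\nabla p_2|$) turns the right-hand side into a dissipative term $-d_6\int_U|\nabla P|^{2+\beta_1}/\omega^{\beta_1+\beta_2}\,dx$ plus a perturbation error controlled by $d_7\,|\vec a^{(1)}-\vec a^{(2)}|$.

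Next I would estimate the error term. On the factor $K(|\nabla p_1|\vee|\nabla p_2|,\vec a^{(1)}\wedge\vec a^{(2)})\,(|\nabla p_1|\vee|\nabla p_2|)$ I would use \eqref{mc2} with $m=1$ --- with $d_3$ taken uniform over $\coefset$, as arranged in the text --- to bound it by $d_3(|\nabla p_1|\vee|\nabla p_2|)^{1-\beta_2}\le d_3\omega^{1-\beta_2}$; hence the error is at most $C\,|\vec a^{(1)}-\vec a^{(2)}|\int_U\omega^{1-\beta_2}|\nabla P|\,dx$. The decisive step is then a weighted Young's inequality with exponents $2+\beta_1$ and $(2+\beta_1)/(1+\beta_1)$ applied to $\omega^{1-\beta_2}|\nabla P|=\big(|\nabla P|/\omega^{(\beta_1+\beta_2)/(2+\beta_1)}\big)\cdot\omega^{1-\beta_2+(\beta_1+\beta_2)/(2+\beta_1)}$, which yields $\varepsilon\,|\nabla P|^{2+\beta_1}/\omega^{\beta_1+\beta_2}+C(\varepsilon)\omega^{2-\beta_2}$, the exponent of $\omega$ in the second term working out to exactly $2-\beta_2$. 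Crucially, I would keep the factor $|\vec a^{(1)}-\vec a^{(2)}|$ to the first power and choose $\varepsilon$ so small (using the compactness of $\coefset$, so that $|\vec a^{(1)}-\vec a^{(2)}|$ is uniformly bounded) that the $\varepsilon$-term is absorbed into $\tfrac12 d_6\int_U|\nabla P|^{2+\beta_1}/\omega^{\beta_1+\beta_2}\,dx$. Since $\int_U\omega^{2-\beta_2}\,dx\le C\Lambda(t)^{2-\beta_2}$, this leaves
\[
\frac{d}{dt}\|P\|^2\le -d_6\int_U\frac{|\nabla P|^{2+\beta_1}}{\omega^{\beta_1+\beta_2}}\,dx+C\,|\vec a^{(1)}-\vec a^{(2)}|\,\Lambda(t)^{2-\beta_2}.
\]

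Then I would convert this into a scalar ODE exactly as in Proposition \ref{prop61}: the H\"older/Poincar\'e--Sobolev embedding \eqref{embL2} (valid verbatim since here $P=\bar P$) bounds the dissipation from below by $C\Lambda(t)^{-(\beta_1+\beta_2)}\|P\|^{2+\beta_1}$, so with $y=\|P\|^2$ one gets $y'\le -C_0\Lambda^{-(\beta_1+\beta_2)}y^{(2+\beta_1)/2}+C|\vec a^{(1)}-\vec a^{(2)}|\,\Lambda^{2-\beta_2}$. For (i), apply \eqref{ubode} of Lemma \ref{ODE2} with $\theta=(2+\beta_1)/2$; here $f/h=C|\vec a^{(1)}-\vec a^{(2)}|\,\Lambda^{2+\beta_1}$ and $[Env(f/h)]^{1/\theta}=C|\vec a^{(1)}-\vec a^{(2)}|^{2/(2+\beta_1)}\,Env(\Lambda^{2+\beta_1})^{2/(2+\beta_1)}$, and the linearity in $|\vec a^{(1)}-\vec a^{(2)}|$ is precisely what produces the exponent $2/(2+\beta_1)$. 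Using $\Lambda(t)\le C\mathcal Y(t)^{1/(2-\beta_2)}$ (the single-$\Psi$ analog of \eqref{U3}) gives $Env(\Lambda^{2+\beta_1})^{2/(2+\beta_1)}\le C\,Env\,\mathcal Y(t)^{2/(2-\beta_2)}$, which is \eqref{ssc2}. For (ii), when $\mathcal K<\infty$ the analog of \eqref{limsup} gives $\limsup_{t\to\infty}\Lambda(t)\le C(1+\mathcal K)^{1/(2-\beta_2)}<\infty$, so $h=C_0\Lambda^{-(\beta_1+\beta_2)}$ is eventually bounded below and $\int_0^\infty h\,dt=\infty$; then \eqref{ulode} yields $\limsup_t\|P\|^2\le\limsup_t[f/h]^{1/\theta}=C|\vec a^{(1)}-\vec a^{(2)}|^{2/(2+\beta_1)}\limsup_t\Lambda^2\le C(1+\mathcal K)^{2/(2-\beta_2)}|\vec a^{(1)}-\vec a^{(2)}|^{2/(2+\beta_1)}$, which is \eqref{ssc3}.

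The main obstacle is the error-term bookkeeping in the second step: one must split $\omega^{1-\beta_2}|\nabla P|$ so that the gradient part matches the weight $\omega^{\beta_1+\beta_2}$ of the dissipation (forcing the exponents $2+\beta_1$ and $(2+\beta_1)/(1+\beta_1)$) while keeping $|\vec a^{(1)}-\vec a^{(2)}|$ strictly linear, since any extra power of this factor would corrupt the final exponent $2/(2+\beta_1)$. The absorption of the $\varepsilon$-term then relies essentially on the compactness of $\coefset$ to bound $|\vec a^{(1)}-\vec a^{(2)}|$ uniformly, so that a single choice of $\varepsilon$ works for all admissible pairs of coefficient vectors.
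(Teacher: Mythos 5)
Your proposal is correct, and its architecture is the paper's: the energy identity for $P$, the perturbed monotonicity of Lemma \ref{lempm}, the embedding \eqref{embL2} to pass to the scalar inequality \eqref{Hu}, and Lemma \ref{ODE2} with $\theta=(2+\beta_1)/2$, the linearity in $|\vec a^{(1)}-\vec a^{(2)}|$ being exactly what yields the exponent $2/(2+\beta_1)$ in both \eqref{ssc2} and \eqref{ssc3}. The one step you do differently is the one you single out as decisive: the paper needs neither your weighted Young's inequality nor the absorption into the dissipation. Since the perturbation term in \eqref{monoper} already carries the factor $|y-y'|=|\nabla P|$, the paper simply bounds $|\nabla P|\le|\nabla p_1|+|\nabla p_2|\le 2\,(|\nabla p_1|\vee|\nabla p_2|)$ and applies \eqref{mc2} with $m=2$ to $K\big(|\nabla p_1|\vee|\nabla p_2|,\vec a^{(1)}\wedge\vec a^{(2)}\big)(|\nabla p_1|\vee|\nabla p_2|)^2$, landing directly on the term $C\,|\vec{a}^{(1)}-\vec{a}^{(2)}|\,\Lambda(t)^{2-\beta_2}$ of \eqref{keyEst}--\eqref{Hu}; this keeps the dissipation intact and keeps $|\vec a^{(1)}-\vec a^{(2)}|$ linear for free. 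Your variant is sound --- your exponent bookkeeping $\big[1-\beta_2+\tfrac{\beta_1+\beta_2}{2+\beta_1}\big]\cdot\tfrac{2+\beta_1}{1+\beta_1}=2-\beta_2$ checks out --- but it is more laborious and makes the choice of $\varepsilon$ depend on the diameter of $\coefset$; in the paper, compactness of $\coefset$ enters only to make the constants $d_2,d_3,d_6,d_7$ (hence the generic $C$) uniform, not to run an absorption. A last cosmetic difference: in (i) you apply Lemma \ref{ODE2} to the inequality with $\Lambda(t)$ and substitute $\Lambda(t)\le C\,\mathcal Y(t)^{1/(2-\beta_2)}$ afterwards, while the paper substitutes first (obtaining \eqref{Hu1}) and then invokes the lemma; both orderings give \eqref{ssc2}, and your part (ii) coincides with the paper's \eqref{ssc5}--\eqref{baseineq3}.
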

\begin{proof}
Multiplying equation  \eqref{ss1} by $P$, integrating over $U$, and by integration by parts, we find that
\begin{align*}
\frac 12\frac{d}{dt}\int_U P^2 dx= -\int_U (K(|\nabla p_1|,\vec{a}^{(1)})\nabla p_1-K(|\nabla p_2|,\vec{a}^{(2)})\nabla p_2)\cdot (\nabla p_1-\nabla p_2)dx.
\end{align*}

By the perturbed monotonicity  \eqref{monoper} of $K(\xi,\vec{a})$, we have 
\beq\label{ssc4}
\frac 12\frac{d}{dt}\int_U P^2 dx
\le -d_6J+C  |\vec{a}^{(1)}-\vec{a}^{(2)}|\int_UK(|\nabla p_1|\vee |\nabla p_2|,\vec a^{(1)}\wedge \vec a^{(2)}) (|\nabla p_1|\vee |\nabla p_2|)^2  dx,
\eeq
where
\beqs
J=\int_U\frac{|\nabla P|^{2+\beta_1}}{(1+|\nabla p_1|+|\nabla p_2|)^{\beta_1+\beta_2}} dx.
\eeqs
Using \eqref{mc2} with $m=2$ for the last integral of \eqref{ssc4}, we have 
\beq\label{keyEst}
\frac 12\frac{d}{dt}\int_U P^2 dx
\le -d_6J+C |\vec{a}^{(1)}-\vec{a}^{(2)}|\int_U ( |\nabla p_1|^{2-\beta_2}+ |\nabla p_2|^{2-\beta_2}) dx.
\eeq
In \eqref{keyEst}, estimating $J$ by \eqref{embL2} we have, same as \eqref{6one},
\beq\label{Hu}
\frac 12\frac{d}{dt}\int_U P^2 dx\le - C_1\Big(\int_U P^2 dx\Big)^\frac{2+\beta_1}{2}\Lambda(t)^{-(\beta_1+\beta_2)}+C |\vec{a}^{(1)}-\vec{a}^{(2)}|\Lambda(t)^{2-\beta_2},
\eeq
where $C_1=2d_6 C_{\rm PS}^{-(2+\beta_1)}$.

(i) Same as \eqref{U3},  there is $C_2>0$ such that 
$\Lambda(t) \le C_2 {\mathcal Y}(t)^{\frac{1}{2-\beta_2}}$.
 Hence, we find that  
\beq\label{Hu1}
\frac 12\frac{d}{dt}\int_U P^2 dx\le - C_3\Big(\int_U P^2 dx\Big)^\frac{2+\beta_1}{2}{\mathcal Y}(t)^{-\frac{\beta_1+\beta_2}{2-\beta_2}}+C |\vec{a}^{(1)}-\vec{a}^{(2)}| {\mathcal Y}(t)
\eeq
for some $C_3>0$.
 Applying \eqref{ubode} of Lemma \ref{ODE2} to  differential inequality \eqref{Hu1}, we obtain 
 \beqs
\int_U |P(x,t)|^2 dx\le \int_U |P(x,0)|^2dx+C\Big\{ |\vec{a}^{(1)}-\vec{a}^{(2)}| Env\, { \mathcal Y}(t)^{1+\frac{\beta_1+\beta_2}{2-\beta_2}}\Big\}^\frac{2}{2+\beta_1} ,
\eeqs
 thus,  \eqref{ssc2} follows.

(ii) By the virtue of  \eqref{wteq11} and \eqref{AK},  
\beq\label{ssc5}
\limsup_{t\to\infty} \Lambda(t) \le 1+ \sum_{i=1,2}\limsup_{t\to\infty} \norm{\nabla p_i}_{L^{2-\beta_2}} \le C \big( 1+ \mathcal K\big)^\frac{1}{2-\beta_2}<\infty.
\eeq
Thus,
$$\displaystyle \int_0^\infty\Lambda(t)^{-(\beta_1+\beta_2)}dt =\infty.$$
Applying Lemma \ref{ODE2} to \eqref{Hu}, for $\theta=\frac{2+\beta_1}2$, we have 
\beq\label{baseineq3}
\begin{aligned}
\limsup_{t\to\infty}\int_U P^2 dx\le  C\Big[ |\vec{a}^{(1)}-\vec{a}^{(2)}|\limsup_{t\to\infty} \Lambda(t)^{2+\beta_1} \Big]^\frac{2}{2+\beta_1}= C |\vec{a}^{(1)}-\vec{a}^{(2)}|^\frac{2}{2+\beta_1} \limsup_{t\to\infty} \Lambda^2(t).
\end{aligned}
\eeq
Therefore,  we obtain  \eqref{ssc3} from \eqref{baseineq3} and \eqref{ssc5}.
\end{proof}

\myclearpage

\def\cprime{$'$}


\begin{thebibliography}{10}

\bibitem{ABHI1}
E.~Aulisa, L.~Bloshanskaya, L.~Hoang, and A.~Ibragimov.
\newblock {Analysis of generalized {F}orchheimer flows of compressible fluids
  in porous media}.
\newblock {\em J. Math. Phys.}, 50(10):103102:44pp, 2009.

\bibitem{BearBook}
J.~Bear.
\newblock {\em {Dynamics of Fluids in Porous Media}}.
\newblock American Elsevier Pub. Co., New York, 1972.

\bibitem{CH2}
E.~Celik and L.~Hoang.
\newblock Maximum estimates for generalized {F}orchheimer flows in
  heterogeneous porous media.
\newblock 2015.
\newblock submitted, preprint http://arxiv.org/abs/1510.09000.

\bibitem{CH1}
E.~Celik and L.~Hoang.
\newblock Generalized {F}orchheimer flows in heterogeneous porous media.
\newblock {\em Nonlinearity}, 29(3):1124--1155, 2016.

\bibitem{CHK1}
E.~Celik, L.~Hoang, and T.~Kieu.
\newblock Generalized {F}orchheimer flows of isentropic gases.
\newblock 2015.
\newblock submitted, preprint http://arxiv.org/abs/1504.00742.

\bibitem{CHK2}
E.~Celik, L.~Hoang, and T.~Kieu.
\newblock Doubly nonlinear parabolic equations for a general class of
  {F}orchheimer gas flows in porous media.
\newblock 2016.
\newblock submitted, preprint http://arxiv.org/abs/1601.00703.

\bibitem{DiDegenerateBook}
E.~DiBenedetto.
\newblock {\em {Degenerate parabolic equations}}.
\newblock {Universitext}. Springer-Verlag, New York, 1993.

\bibitem{Dudgeon85}
C.~Dudgeon.
\newblock {\em Non-{D}arcy flow of groundwater. {P}art 1. {T}heoretical,
  experimental and numerical studies. Report No. 162}.
\newblock Water research laboratory, {T}he {U}niversity of {N}ew {S}outh
  {W}ales, 1985.

\bibitem{Forchh1901}
P.~Forchheimer.
\newblock {Wasserbewegung durch Boden}.
\newblock {\em Zeit. Ver. Deut. Ing.}, 45:1781--1788, 1901.

\bibitem{ForchheimerBook}
P.~Forchheimer.
\newblock {\em {{H}ydraulik}}.
\newblock Number Leipzig, Berlin, B. G. Teubner. 1930.
\newblock 3rd edition.

\bibitem{HI1}
L.~Hoang and A.~Ibragimov.
\newblock {Structural stability of generalized {F}orchheimer equations for
  compressible fluids in porous media}.
\newblock {\em Nonlinearity}, 24(1):1--41, 2011.

\bibitem{HI2}
L.~Hoang and A.~Ibragimov.
\newblock Qualitative study of generalized {F}orchheimer flows with the flux
  boundary condition.
\newblock {\em Adv. Diff. Eq.}, 17(5--6):511--556, 2012.

\bibitem{HIKS1}
L.~Hoang, A.~Ibragimov, T.~Kieu, and Z.~Sobol.
\newblock Stability of solutions to generalized {F}orchheimer equations of any
  degree.
\newblock {\em J. Math. Sci.}, 210(4):476--544, 2015.

\bibitem{HK1}
L.~Hoang and T.~Kieu.
\newblock Interior estimates for generalized {F}orchheimer flows of slightly
  compressible fluids.
\newblock 2014.
\newblock submitted, preprint http://arxiv.org/abs/1404.6517.

\bibitem{HK2}
L.~Hoang and T.~Kieu.
\newblock Global estimates for generalized {F}orchheimer flows of slightly
  compressible fluids.
\newblock {\em Journal d'Analyse Mathematique}, 2015.
\newblock accepted.

\bibitem{HIK1}
L.~T. Hoang, A.~Ibragimov, and T.~T. Kieu.
\newblock One-dimensional two-phase generalized {F}orchheimer flows of
  incompressible fluids.
\newblock {\em J. Math. Anal. Appl.}, 401(2):921--938, 2013.

\bibitem{HIK2}
L.~T. Hoang, A.~Ibragimov, and T.~T. Kieu.
\newblock A family of steady two-phase generalized {F}orchheimer flows and
  their linear stability analysis.
\newblock {\em J. Math. Phys.}, 55(12):123101:32pp, 2014.

\bibitem{HKP1}
L.~T. Hoang, T.~T. Kieu, and T.~V. Phan.
\newblock Properties of generalized {F}orchheimer flows in porous media.
\newblock {\em J. Math. Sci.}, 202(2):259--332, 2014.

\bibitem{JerisonKenig1995}
D.~Jerison and C.~E. Kenig.
\newblock The inhomogeneous {D}irichlet problem in {L}ipschitz domains.
\newblock {\em J. Funct. Anal.}, 130(1):161--219, 1995.

\bibitem{Muskatbook}
M.~Muskat.
\newblock {\em {The flow of homogeneous fluids through porous media}}.
\newblock McGraw-Hill Book Company, inc., 1937.

\bibitem{NieldBook}
D.~A. Nield and A.~Bejan.
\newblock {\em {Convection in porous media}}.
\newblock Springer-Verlag, New York, fourth edition, 2013.

\bibitem{SSHI2016}
F.~Siddiqui, M.~Y. Soliman, W.~House, and A.~Ibragimov.
\newblock Pre-{D}arcy flow revisited under experimental investigation.
\newblock {\em Journal of {A}nalytical {S}cience and {T}echnology}, 7(2), 2016.

\bibitem{SoniIslamBasak78}
J.~Soni, N.~Islam, and P.~Basak.
\newblock An experimental evaluation of non-{D}arcian flow in porous media.
\newblock {\em Journal of {H}ydrology}, 38(3-4):231--241, 1978.

\bibitem{StraughanBook}
B.~Straughan.
\newblock {\em {Stability and wave motion in porous media}}, volume 165 of {\em
  {Applied Mathematical Sciences}}.
\newblock Springer, New York, 2008.

\bibitem{VazquezPorousBook}
J.~L. V{\'a}zquez.
\newblock {\em {The porous medium equation}}.
\newblock {Oxford Mathematical Monographs}. The Clarendon Press Oxford
  University Press, Oxford, 2007.
\newblock Mathematical theory.

\end{thebibliography}
%
\end{document}